\newcommand{\klockan}{\the\hours:{\ifnum\minutes<10 0\fi}\the\minutes}
\newcommand{\tid}{\today\ \klockan}
\newcommand{\prtid}{\smash{\raise 10mm \hbox{\LaTeX ed \tid}}}
\renewcommand{\prtid}{}
\def\sectionmark#1{} 
\def\subsectionmark#1{}
\newcommand{\sectnr}{\ifnum \c@secnumdepth >\z@
                 \thesection.\hskip 1em\relax \fi}
\def\@evenhead{\footnotesize\rm\thepage\hfil\leftmark\hfil\llap{\prtid}}
\def\@oddhead{\footnotesize\rm\rlap{\prtid}\hfil\rightmark\hfil\thepage}
\def\tableofcontents{\section*{Contents} 
 \@starttoc{toc}}
\def\@biblabel#1{#1.}
\let\Thebibliography=\thebibliography
\renewcommand{\thebibliography}[1]{\def\@mkboth##1##2{}\Thebibliography{#1}
\addcontentsline{toc}{section}{References}
\frenchspacing 
\setlength{\@topsep}{0pt}
\setlength{\itemsep}{0pt}%
\setlength{\parskip}{0pt plus 2pt}%
}
\def\mdots@{\mathinner.\nonscript\!.%
 \ifx\next,.\else\ifx\next;.\else\ifx\next..\else
 \nonscript\!\mathinner.\fi\fi\fi}
\let\ldots\mdots@
\let\cdots\mdots@
\let\dotso\mdots@
\let\dotsb\mdots@
\let\dotsm\mdots@
\let\dotsc\mdots@
\def\vdots{\vbox{\baselineskip2.8\p@ \lineskiplimit\z@
    \kern6\p@\hbox{.}\hbox{.}\hbox{.}\kern3\p@}}
\def\ddots{\mathinner{\mkern1mu\raise8.6\p@\vbox{\kern7\p@\hbox{.}}%
    \raise5.8\p@\hbox{.}\raise3\p@\hbox{.}\mkern1mu}}
\let\Enumerate=\enumerate
\renewcommand{\enumerate}{\Enumerate%
\setlength{\@topsep}{0pt}
\setlength{\itemsep}{0pt}%
\setlength{\parskip}{0pt plus 1pt}%
\renewcommand{\theenumi}{\textup{(\alph{enumi})}}%
\renewcommand{\labelenumi}{\theenumi}%
}
\let\endEnumerate=\endenumerate
\renewcommand{\endenumerate}{\endEnumerate\unskip}
\def\@seccntformat#1{\csname the#1\endcsname.\quad}
\newcommand{\authortitle}[2]{\author{#1}\title{#2}\markboth{#1}{#2}}
\newcommand{\art}[6]{{\sc #1, \rm #2, \it #3\/ \bf #4 \rm (#5), \mbox{#6}.}}
\newcommand{\artin}[3]{{\rm #1, \rm #2, in #3.}}
\newcommand{\artnopt}[6]{{\sc #1, \rm #2, \it #3\/ \bf #4 \rm (#5), \mbox{#6}}}
\newcommand{\arttoappear}[3]{{\sc #1, \rm #2, to appear in \it #3}}
\newcommand{\auth}[2]{{#1, #2.}}
\newcommand{\book}[3]{{\sc #1, \it #2, \rm #3.}}
\newcommand{\AND}{{\rm and }}
\newtheoremstyle{descriptive}%
  {\topsep}   
  {\topsep}   
  {\rmfamily} 
  {}          
  {\bfseries} 
  {.}         
  { }         
  {}          
\newtheoremstyle{propositional}%
  {\topsep}   
  {\topsep}   
  {\itshape}  
  {}          
  {\bfseries} 
  {.}         
  { }         
  {}          
\theoremstyle{propositional}
\newtheorem{thm}{Theorem}[section]
\newtheorem{prop}[thm]{Proposition}
\newtheorem{lem}[thm]{Lemma}
\newtheorem{cor}[thm]{Corollary}
\theoremstyle{descriptive}
\newtheorem{deff}[thm]{Definition}
\newtheorem{remark}[thm]{Remark}
\newtheorem{example}[thm]{Example}
\renewenvironment{proof}[1][\proofname]{\par
  \pushQED{\qed}%
  \normalfont
  \trivlist
  \item[\hskip\labelsep
        \itshape
    #1\@addpunct{.}]\ignorespaces
}{%
  \popQED\endtrivlist\@endpefalse
}
\gdef\eeaa#1pt{#1}}      
\def\accentadjtext#1{\setbox0\hbox{$#1$}\kern   
                \expandafter\eeaa\the\fontdimen1\textfont1 \ht0 }
\def\accentadjscript#1{\setbox0\hbox{$#1$}\kern 
                \expandafter\eeaa\the\fontdimen1\scriptfont1 \ht0 }
\def\accentadjscriptscript#1{\setbox0\hbox{$#1$}\kern   
                \expandafter\eeaa\the\fontdimen1\scriptscriptfont1 \ht0 }
\def\accentadjtextback#1{\setbox0\hbox{$#1$}\kern       
                -\expandafter\eeaa\the\fontdimen1\textfont1 \ht0 }
\def\accentadjscriptback#1{\setbox0\hbox{$#1$}\kern     
                -\expandafter\eeaa\the\fontdimen1\scriptfont1 \ht0 }
\def\accentadjscriptscriptback#1{\setbox0\hbox{$#1$}\kern 
                -\expandafter\eeaa\the\fontdimen1\scriptscriptfont1 \ht0 }
\def\itoverline#1{{\mathsurround0pt\mathchoice
        {\rlap{$\accentadjtext{\displaystyle #1}
                \accentadjtext{\vrule height1.593pt}
                \overline{\phantom{\displaystyle #1}
                \accentadjtextback{\displaystyle #1}}$}{#1}}
        {\rlap{$\accentadjtext{\textstyle #1}
                \accentadjtext{\vrule height1.593pt}
                \overline{\phantom{\textstyle #1}
                \accentadjtextback{\textstyle #1}}$}{#1}}
        {\rlap{$\accentadjscript{\scriptstyle #1}
                \accentadjscript{\vrule height1.593pt}
                \overline{\phantom{\scriptstyle #1}
                \accentadjscriptback{\scriptstyle #1}}$}{#1}}
        {\rlap{$\accentadjscriptscript{\scriptscriptstyle #1}
                \accentadjscriptscript{\vrule height1.593pt}
                \overline{\phantom{\scriptscriptstyle #1}
                \accentadjscriptscriptback{\scriptscriptstyle #1}}$}{#1}}}}
\def\itunderline#1{{\mathsurround0pt\mathchoice
        {\rlap{$\underline{\phantom{\displaystyle #1}
                \accentadjtextback{\displaystyle #1}}$}{#1}}
        {\rlap{$\underline{\phantom{\textstyle #1}
                \accentadjtextback{\textstyle #1}}$}{#1}}
        {\rlap{$\underline{\phantom{\scriptstyle #1}
                \accentadjscriptback{\scriptstyle #1}}$}{#1}}
        {\rlap{$\underline{\phantom{\scriptscriptstyle #1}
                \accentadjscriptscriptback{\scriptscriptstyle #1}}$}{#1}}}}
\newcommand{\limplus}{{\mathchoice{\vcenter{\hbox{$\scriptstyle +$}}}
  {\vcenter{\hbox{$\scriptstyle +$}}}
  {\vcenter{\hbox{$\scriptscriptstyle +$}}}
  {\vcenter{\hbox{$\scriptscriptstyle +$}}}
}}
\def\vint{\mathop{\mathchoice%
          {\setbox0\hbox{$\displaystyle\intop$}\kern 0.22\wd0%
           \vcenter{\hrule width 0.6\wd0}\kern -0.82\wd0}%
          {\setbox0\hbox{$\textstyle\intop$}\kern 0.2\wd0%
           \vcenter{\hrule width 0.6\wd0}\kern -0.8\wd0}%
          {\setbox0\hbox{$\scriptstyle\intop$}\kern 0.2\wd0%
           \vcenter{\hrule width 0.6\wd0}\kern -0.8\wd0}%
          {\setbox0\hbox{$\scriptscriptstyle\intop$}\kern 0.2\wd0%
           \vcenter{\hrule width 0.6\wd0}\kern -0.8\wd0}}%
          \mathopen{}\int}
\numberwithin{equation}{section}
\newenvironment{ack}{\medskip{\it Acknowledgement.}}{}
\renewcommand{\emptyset}{\varnothing}
\DeclareMathOperator{\Div}{div}
\renewcommand{\phi}{\varphi}
\newcommand{\al}{\alpha}
\newcommand{\Ga}{\Gamma}
\newcommand{\eps}{\varepsilon}
\newcommand{\R}{\mathbf{R}}
\newcommand{\C}{\mathbf{C}}
\newcommand{\Z}{\mathbf{Z}}
\newcommand{\Sp}{\mathbf{S}}%
\newcommand{\eR}{{\overline{\R}}}
\newcommand{\Rnp}{\R^n_\limplus}
\newcommand{\p}{{$p\mspace{1mu}$}}
\newcommand{\Np}{N^{1,p}}
\newcommand{\Nploc}{N^{1,p}\loc}
\newcommand{\Dp}{D^p}
\newcommand{\Dploc}{D^{p}\loc}
\newcommand{\Lploc}{L^{p}\loc}
\newcommand{\UU}{\mathcal{U}}%
\newcommand{\bdy}{\partial}
\newcommand{\bdry}{\partial}
\newcommand{\bdhat}{\widehat{\partial}}
\newcommand{\setm}{\setminus}
\newcommand{\ga}{\gamma}
\newcommand{\Om}{\Omega}
\newcommand{\sig}{\sigma}
\newcommand{\dha}{\hat{d}}
\newcommand{\hh}{\hat{h}}
\newcommand{\sha}{\hat{s}_a}
\newcommand{\muh}{\hat{\mu}}
\newcommand{\muha}{\hat{\mu}}
\newcommand{\uh}{\hat{u}}
\newcommand{\gh}{\hat{g}}
\newcommand{\tf}{\tilde{f}}
\newcommand{\Xdot}{{\widehat{X}}}
\newcommand{\Omhat}{{\widehat{\Om}}}
\newcommand{\Bhat}{{\widehat{B}}}
\DeclareMathOperator{\Lip}{Lip}
\newcommand{\Lipc}{{\Lip_c}}
\DeclareMathOperator*{\essliminf}{ess\,lim\,inf}
\newcommand{\loc}{_{\rm loc}}
\newcommand{\Cp}{{C_p}}
\newcommand{\CpX}{\Cp} 
\newcommand{\CpXdot}{\Cphat} 
\newcommand{\Cphat}{{\widehat{C}_p}}
\newcommand{\lSo}{\itunderline{S}_0}
\newcommand{\uSo}{\itoverline{S}_0}
\DeclareMathOperator{\capp}{cap}
\newcommand{\cphat}{\capp^\Xdot_p}
\newcommand{\simge}{\gtrsim}
\newcommand{\simle}{\lesssim}
\newcommand{\Hp}{P}                 
\newcommand{\uHp}{\itoverline{P}}  
\newcommand{\uP}{\itoverline{P}}     
\newcommand{\lP}{\itunderline{P}}
\DeclareMathOperator*{\essinf}{ess\,inf}
\DeclareMathOperator{\diam}{diam}
\DeclareMathOperator{\Mod}{Mod}
\newcommand{\Modp}{{\Mod_p}}
\newcommand{\pharm}{{\omega}_p} 
\begin{document}
%
%
\authortitle{Anders Bj\"orn, Jana Bj\"orn and Xining Li}
{Sphericalization and
\p-harmonic functions on unbounded domains in Ahlfors regular spaces}
\title{Sphericalization and
\p-harmonic functions on unbounded domains in Ahlfors regular metric spaces}
\author{
Anders Bj\"orn \\
\it\small Department of Mathematics, Link\"oping University, \\
\it\small SE-581 83 Link\"oping, Sweden\/{\rm ;}
\it \small anders.bjorn@liu.se
\\
\\
Jana Bj\"orn \\
\it\small Department of Mathematics, Link\"oping University, \\
\it\small SE-581 83 Link\"oping, Sweden\/{\rm ;}
\it \small jana.bjorn@liu.se
\\
\\
Xining Li \\
\it\small Department of Mathematics, Sun Yat-sen University, \\
\it \small  Guangzhou, 510275, P. R. China\/{\rm ;}
E-mail address: \it \small lixining3@mail.sysu.edu.cn
}

\date{}
\maketitle

\noindent{\small
{\bf Abstract}.
We use sphericalization to study 
the Dirichlet problem, Perron solutions 
and boundary regularity for \p-harmonic
functions on unbounded sets in Ahlfors regular metric spaces.
Boundary regularity for the point at infinity is given
special attention.
In particular, we allow for several ``approach directions''
towards infinity and take into account the massiveness of their
complements.

In 2005, Llorente--Manfredi--Wu 
showed that the 
\p-harmonic measure  on the upper half space
$\Rnp$, $n \ge 2$, is not subadditive on null sets 
when $p \ne 2$.
Using their result and spherical inversion, we create similar
bounded examples in the unit ball 
$\mathbf{B}\subset\R^n$ showing that the $n$-harmonic
measure is not subadditive on null sets when $n \ge 3$, 
and neither are the \p-harmonic measures in $\mathbf{B}$
generated by certain weights depending on $p\ne2$ and $n\ge2$.
}

\bigskip
\noindent
{\small \emph{Key words and phrases}:
Ahlfors regular metric space,
boundary regularity,
Dirichlet problem,
Muckenhoupt $A_p$ weight,
Perron solution,
\p-harmonic function,
\p-harmonic measure,
Poincar\'e inequality,
sphericalization.
}

\medskip
\noindent
{\small Mathematics Subject Classification (2010):
Primary: 31E05; Secondary: 26D10, 30L99, 35J66, 35J92, 49Q20. 
}

\section{Introduction}

In this paper we use sphericalization to
study Perron solutions and boundary regularity
for \p-harmonic functions on unbounded sets in metric measure spaces.
On unweighted $\R^n$, a \emph{\p-harmonic function} in 
an open set $\Om \subset \R^n$
is a continuous
weak solution $u$ of the \p-Laplace equation
\[
    \Delta_p u = \Div(|\nabla u|^{p-2} \nabla u)=0.
\]
Equivalently, $u$ is a continuous minimizer of the \p-energy integral
among functions with the same boundary values, i.e.\
\[
    \int_{\phi \ne 0}  |\nabla u|^p \, dx
    \le \int_{\phi \ne 0}  |\nabla (u+\phi)|^p \, dx
    \quad
    \text{for all } \phi \in \Lipc(\Om).
\] 
This latter definition is suitable for generalization to
metric spaces, using \p-weak upper gradients.

The setting considered in this paper is a complete
metric space $(X,d)$ equipped with an Ahlfors $Q$-regular measure $\mu$, $Q>1$,
supporting a $q$-Poincar\'e inequality.
The precise relations between $p$, $Q$ and $q$ are
given in \eqref{eq-ass-pQ} and \eqref{eq-ass-pQ-PI}
and make it possible to ``sphericalize'' $X$ into a new bounded metric
space with suitable properties, where the theory of \p-harmonic functions
is well-developed.

Our primary object of study is the Dirichlet problem,
which given $f:\bdy \Om \to \R$ asks for a \p-harmonic function
having $f$ as boundary values in some weak sense. 
We use the Perron method to solve this problem,
which always provides two solutions $\lP f \le \uP f$ of
the Dirichlet problem.
When they coincide
we denote the common solution by $\Hp f$
and $f$ is called \emph{resolutive}.
Under the assumptions in this paper, continuous functions are resolutive.
A boundary point $x_0 \in \bdy \Om$ is \emph{regular}
if 
\[
\lim_{\Om \ni y \to x} Pf(y)=f(x) 
\quad \text{for all } f \in C(\bdy \Om).
\]

Perron solutions for \p-harmonic functions on \emph{bounded}
open sets $\Om$ in metric spaces were first  studied in
Bj\"orn--Bj\"orn--Shanmugalingam~\cite{BBS2}.
Boundary regularity 
for \p-harmonic functions on bounded open sets in metric spaces
was studied even earlier in 
Bj\"orn~\cite{Bj02},
Bj\"orn--MacManus--Shanmugalingam~\cite{BMS}
and Bj\"orn--Bj\"orn--Shanmugalingam~\cite{BBS};
and a rather extensive study 
was carried out in Bj\"orn--Bj\"orn~\cite{BB}.

In this paper, our main interest lies in proving
resolutivity
and boundary regularity in  \emph{unbounded}
$\Om$ by  using sphericalization. 
The sphericalization technique makes it possible to map
the unbounded space $X$ into a bounded metric space,
while preserving the \p-harmonic functions, 
in the spirit of the Kelvin transform.
We can then appeal to the earlier results for bounded sets and 
``map'' them back to the original unbounded situation.
When $\Om$ is unbounded we consider its boundary within the one-point
compactification of $X$, so $\infty$ is 
included in the boundary.

Perron solutions for \p-harmonic functions on unbounded
open sets  in metric spaces have been
studied by Hansevi~\cite{Hansevi2} who realised
that there were substantial additional complications in the
unbounded case compared with the bounded case.
There is some overlap between his and our results.
To our best knowledge, boundary regularity for \p-harmonic 
functions on unbounded open sets 
in metric spaces has not been studied before,
beyond weighted $\R^n$.
However, regularity of the boundary at $\infty$
for global \p-harmonic functions on certain Cartan--Hadamard manifolds
and Gromov hyperbolic spaces
was considered in e.g.\ Holopainen~\cite{Holo02}, 
Holopainen--Lang--V\"ah\"akangas~\cite{HoloLV07}
and Holopainen--V\"ah\"akangas~\cite{HoloV07}.

Li--Shanmugalingam~\cite{LiShan} showed that (under suitable conditions)
sphericalization preserves Ahlfors regularity and Poincar\'e inequalities.
These significant results are our starting point.
The measure $\mu_a$, with which they equipped the sphericalization,
does however not preserve the \p-energy, and we are therefore
forced to equip the sphericalization with a different 
measure $\muha$, generated by a suitable power weight.
Using (a metric space version of) the theory of Muckenhoupt $A_p$ weights we
can show that the sphericalization equipped with $\muha$ supports
a \p-Poincar\'e inequality, provided the original space
supports a $q$-Poincar\'e inequality, with $q$ given by
\eqref{eq-ass-pQ-PI}.
This whole machinery is carried out in Sections~\ref{S:Prelim}--\ref{sect-PI}.

In Section~\ref{sect-pharm} we are then ready to start 
our study of \p-harmonic functions and especially Perron solutions
on unbounded sets. 
We obtain several 
resolutivity and perturbation results,
see Theorems~\ref{thm-resol-C} and~\ref{thm-Np-res}.
We also show that boundary regularity is a local property,
that it can be characterized using barriers, and that the Kellogg property
holds, i.e.\ the set of irregular boundary points has capacity zero.

Since regularity is a local property, many of the results
on boundary regularity for bounded sets, such as the Wiener criterion,
carry over directly to finite boundary points of unbounded sets.
The point at $\infty$, however, requires special attention
and is studied in Section~\ref{sect-infty}.

In unweighted $\R^n$, $n\ge2$, with $p > n/2$ (in particular in $\R^2$ 
for any $p>1$) we get a number of new results.
The resolutivity of continuous functions, the Kellogg property and the barrier
characterization have been known before in this setting, see
Kilpel\"ainen~\cite[Theorems~1.5, 1.10 and Corollary~5.6]{Kilp89}.
The obtained resolutivity results for Newtonian (and Dirichlet) 
functions are new when $n/2 < p<n$, while 
for $p\ge n$ they were proved in
Hansevi~\cite{Hansevi2} using different methods.
Theorem~\ref{thm-uniq-bdd-pharm} is new for all $p > n/2$,
although \cite{Hansevi2} contains a weaker result for 
unbounded \p-parabolic sets.
We also obtain several
new characterizations of boundary regularity in 
unbounded sets, corresponding to the results in 
Bj\"orn--Bj\"orn~\cite[Theorem~6.1]{BB}; see also 
Heinonen--Kilpel\"ainen--Martio~\cite[Chapter~9]{HeKiMa}.

Since the \p-energy is preserved, also quasiminimizers
are preserved in just the same way as \p-harmonic functions.
Hence, many earlier boundary regularity results for quasiminimizers
generalize from bounded to unbounded sets, see 
the end of Section~\ref{sect-pharm}.
It also follows that sphericalization is a quasiconformal mapping,
by Theorem~4.1 in  Korte--Marola--Shanmugalingam~\cite{KoMaSh}.

In unbounded domains, the point at infinity can often be approached from
different directions, e.g.\ in an infinite strip or cylinder.
Also on bounded domains, rather than
using the given metric boundary, there are many situations
where one would like to distinguish between
different directions towards a boundary point. 
For example, in the slit disc, where a horizontal ray is removed 
from the disc, 
it is natural to consider different boundary values along
the ray from above and from below.
Bj\"orn--Bj\"orn--Shan\-mu\-ga\-lin\-gam~\cite{BBSdir} carried
out such a study using the Mazurkiewicz metric on
bounded domains in metric spaces.
We are now able to 
transfer these results to unbounded domains,
and we explain in particular what happens at infinity in
Section~\ref{sect-infty}.
We also provide an example where the point at $\infty$
corresponds to \emph{uncountably} many directions, i.e.\
boundary points with respect to the Mazurkiewicz metric, and yet
behaves well for the Perron method, see Example~\ref{ex-uncountable}.

Llorente--Manfredi--Wu~\cite{LoMaWu}, using 
a very sophisticated argument due to Wolff~\cite{wolff},
showed that for any $p \ne 2$ and $n \ge 2$, there are
sets 
$A_1,A_2 \subset \R^{n-1}$ such that
\begin{equation}   \label{eq-ex-nonadditive}
   \pharm(A_1;\Rnp)= \pharm(A_2;\Rnp) =0 
  < \pharm(A_1 \cup A_2;\Rnp),
\end{equation}
where $\pharm(\,\cdot\,;\Rnp)$ denotes the \p-harmonic
measure with respect to the upper half space $\Rnp$.
Despite its name, $\pharm(\,\cdot\,;\Rnp)$ is evidently not a measure, 
but rather a nonlinear generalization of the harmonic measure, and
\eqref{eq-ex-nonadditive} shows that it is not even subadditive.
In Section~\ref{sect-pharm-meas} we transfer the examples 
from~\cite{LoMaWu} to the unit ball in $\R^n$ and thus
create similar bounded examples with respect to 
a weighted measure depending on $p$ and $n$.
In particular,  the measure is the usual Lebesgue measure 
(\emph{without} weight) when $p=n$, so we obtain an analogue of 
\eqref{eq-ex-nonadditive} for the usual $n$-harmonic measure for the
$n$-Laplacian in the unit ball.

\begin{ack}
The first two authors were supported by the Swedish Research Council
grants 2016-03424 and 621-2014-3974, respectively. 
The third author was supported by NSFC grant 11701582.
Part of the
research was done during two visits of the third author to 
Link\"oping University in 2016 and 2018.
\end{ack}

\section{Metric spaces and power weights}
\label{S:Prelim}

We assume throughout the paper
that $1<p<\infty$ and that $X=(X,d,\mu)$ is a metric space equipped
with a metric $d$ and a positive complete  Borel  measure $\mu$
such that $0<\mu(B)<\infty$ for all  
balls $B \subset X$.
Additional
standing assumptions will be given at the
beginning of Sections~\ref{sect-sphericalization} and~\ref{sect-PI}.
Proofs of the results in 
this section can be found in the monographs
Bj\"orn--Bj\"orn~\cite{BBbook} and
Heinonen--Koskela--Shanmugalingam--Tyson~\cite{HKSTbook}.

The measure  $\mu$  is \emph{doubling} if
there exists $C>0$ such that for all balls
$B=B(x_0,r):=\{x\in X: d(x,x_0)<r\}$ in~$X$,
\begin{equation*}
        0 < \mu(2B) \le C \mu(B) < \infty.
\end{equation*}
Here and elsewhere  
$\lambda B=B(x_0,\lambda r)$.
A metric space with a doubling measure is proper\/
\textup{(}i.e.\ closed bounded subsets are compact\/\textup{)}
if and only if it is complete.

A \emph{curve} is a continuous mapping from an interval,
and a \emph{rectifiable} curve is a curve with finite length.
Unless otherwise mentioned, we
 will only consider curves which are nonconstant, compact
and 
rectifiable, and thus each curve can 
be parameterized by its arc length $ds$. 
For a  family $\Gamma$ of curves in $X$,
we define its \emph{\p-modulus}
\[
         \Modp(\Gamma):=\inf \int_X \rho^p \, d\mu,
\]
where the infimum is taken over all 
Borel functions $\rho\ge0$
such that $\int_\ga \rho \,ds \ge 1$ for all $\ga \in \Gamma$.
A property is said to hold for \emph{\p-almost every curve}
if it fails only for a curve family $\Ga$ with zero \p-modulus.
Following Heinonen--Kos\-ke\-la~\cite{HeKo98},
we introduce upper gradients 
as follows 
(they called them very weak gradients).

\begin{deff} \label{deff-ug}
A Borel function $g : X \to [0,\infty]$  is an \emph{upper gradient} 
of a function $u: X \to \eR:=[-\infty,\infty]$
if for all  curves  
$\gamma : [0,l_{\gamma}] \to X$,
\begin{equation} \label{ug-cond}
|u(\gamma(0)) - u(\gamma(l_{\gamma}))| \le \int_{\gamma} g\,ds,
\end{equation}
where the left-hand side is interpreted as
$\infty$ whenever at least one of the terms therein is infinite.
If $g: X \to [0,\infty]$ is measurable 
and \eqref{ug-cond} holds for \p-almost every curve,
then $g$ is a \emph{\p-weak upper gradient} of~$u$. 
\end{deff}

The \p-weak upper gradients were introduced in
Koskela--MacManus~\cite{KoMc}. 
It was also shown therein
that if $g \in \Lploc(X)$ is a \p-weak upper gradient of $u$,
then one can find a sequence $\{g_j\}_{j=1}^\infty$
of upper gradients of $u$ such that $\|g_j-g\|_{L^p(X)}  \to 0$.
If $u$ has an upper gradient in $\Lploc(X)$, then
it has an a.e.\ unique \emph{minimal \p-weak upper gradient} $g_u \in \Lploc(X)$
in the sense that $g_u \le g$ a.e.\ for every \p-weak upper gradient 
$g \in \Lploc(X)$ of $u$, see Shan\-mu\-ga\-lin\-gam~\cite{Sh-harm}.

Together with the doubling property defined above, the following
Poincar\'e inequality is often a standard assumption on metric spaces.

\begin{deff} \label{def.PI.}
We say that $X$ (or $\mu$) supports a \emph{$q$-Poincar\'e inequality},
$q \ge 1$,  if
there exist constants $C>0$ and $\lambda \ge 1$
such that for all balls $B \subset X$,
all integrable functions $u$ on $X$ and all ($q$-weak) 
upper gradients $g$ of $u$,
\[ 
        \vint_{B} |u-u_B| \,d\mu
        \le C \diam(B) \biggl( \vint_{\lambda B} g^{q} \,d\mu \biggr)^{1/q},
\] 
where $u_B :=\vint_B u \,d\mu := \int_B u\, d\mu/\mu(B)$.
\end{deff}

As is customary, we say that $A \simle B$ (and equivalently
$B \simge A$), if there is a constant $C>0$
(independent of the variables that $A$ and $B$ are functions of)
such that $A\le C B$. We also write $A\simeq B$ if $A \simle B \simle A$.

With this notation,  $\mu$ is \emph{Ahlfors $Q$-regular}
if
\[ 
     \mu(B(x,r)) \simeq r^Q.
\] 

Ahlfors regularity is a relatively strong assumption on the measure.
At the same time, it is easily verified that
doubling measures in connected spaces satisfy the one-sided
estimates
\begin{equation*}   
\biggl( \frac{r'}{r} \biggr)^s \simle
\frac{\mu(B(x',r'))}{\mu(B(x,r))} \simle \biggl( \frac{r'}{r} \biggr)^\sig
\end{equation*}   
for some $0<\sig\le s<\infty$, all $x\in X$, $x'\in B(x,r)$ 
and all  $0<r'\le r\le 2 \diam X$.

The following proposition makes it possible to construct new well-behaved
measures from old ones.

\begin{prop}  \label{prop-power-Ap}
Let $X$ be a metric space equipped with a doubling measure $\mu$ such that
for some $\sig>0$, $c\in X$ and all $0<r'\le r\le 2 \diam X$,
\begin{equation}   \label{eq-def-sig}
\frac{\mu(B(c,r'))}{\mu(B(c,r))} \simle \biggl( \frac{r'}{r} \biggr)^\sig.
\end{equation}   
Let $\al\in\R$ and $w(x)=d(x,c)^\al$.
Then the following are true for all balls $B  \subset X$, with 
comparison constants depending on the one in \eqref{eq-def-sig}, 
as well as on $\al$ and $\sig$\textup{:}
\begin{enumerate}
\item If $p>1$ and $-\sig<\al <\sig(p-1)$, then
\begin{equation}    \label{eq-w-Ap}
\vint_B w\,d\mu \biggl( \vint_B w^{1/(1-p)}\,d\mu \biggr)^{p-1}
\simle 1.
\end{equation}
\item If $-\sig<\al\le0$, then
\begin{equation}    \label{eq-w-A1}
\vint_B w\,d\mu \simle \essinf_B w.
\end{equation}
\end{enumerate}
\end{prop}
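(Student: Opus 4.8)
The plan is to reduce everything to one- and two-sided estimates for the averages $\vint_B w\,d\mu$ of the power weight $w(x)=d(x,c)^\al$, distinguishing the position of the center $x_0$ of a ball $B=B(x_0,r)$ relative to the pole $c$. The key dichotomy is: either $B$ is \emph{far} from the pole, meaning $d(x_0,c)\ge 2r$ (equivalently $c\notin 2B$), in which case $w(x)\simeq d(x_0,c)^{\al}$ for all $x\in B$ with comparison constants independent of $B$, so the averages in \eqref{eq-w-Ap} and \eqref{eq-w-A1} are essentially $d(x_0,c)^\al$ and the claimed inequalities are trivial; or $B$ is \emph{close} to the pole, meaning $d(x_0,c)<2r$, in which case $B\subset B(c,3r)=:B'$ and $B'\subset 5B$, so by the doubling property $\mu(B')\simeq\mu(B)$, and it suffices to estimate $\vint_{B'}w\,d\mu$ for balls centered at the pole. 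Thus the whole proposition is reduced to the case $x_0=c$.

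For a ball $B=B(c,R)$ centered at the pole I would compute $\int_B d(x,c)^\al\,d\mu$ by the standard dyadic decomposition $B=\bigcup_{j\ge0}(B(c,2^{-j}R)\setminus B(c,2^{-j-1}R))$, on the $j$-th annulus $d(x,c)^\al\simeq (2^{-j}R)^\al$ and $\mu$ of the annulus is at most $\mu(B(c,2^{-j}R))$. Summing $\sum_{j\ge0}(2^{-j}R)^\al\mu(B(c,2^{-j}R))$ and using \eqref{eq-def-sig} in the form $\mu(B(c,2^{-j}R))\simle 2^{-j\sig}\mu(B(c,R))$, the series converges precisely when $\al+\sig>0$, i.e.\ $\al>-\sig$, giving
\[
\vint_{B(c,R)} w\,d\mu \simle R^\al,\qquad \al>-\sig.
\]
(For $\al\ge0$ one does not even need the growth bound, only monotonicity.) A matching lower bound $\vint_{B(c,R)}w\,d\mu\simge R^\al$ when $\al\le0$ follows at once from monotonicity of $t\mapsto t^\al$ on the ball: $w(x)\ge R^\al$ for $x\in B(c,R)$, hence $\vint_{B}w\,d\mu\ge R^\al$, and in fact $\essinf_B w = R^\al$ since $d(\cdot,c)$ takes values arbitrarily close to $R$ on a doubling (hence not too thin) space; combining these two gives part~(b): for $-\sig<\al\le0$, $\vint_B w\,d\mu\simle R^\al = \essinf_B w$.

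For part~(a), I note that $w^{1/(1-p)}=d(\cdot,c)^{-\al/(p-1)}=d(\cdot,c)^\beta$ with $\beta=-\al/(p-1)$, so the second factor is again a power-weight average to which the \emph{upper} bound above applies \emph{provided} $\beta>-\sig$, i.e.\ $-\al/(p-1)>-\sig$, i.e.\ $\al<\sig(p-1)$. Together with the hypothesis $\al>-\sig$ for the first factor, both averages satisfy $\vint_{B(c,R)}d(\cdot,c)^\al\,d\mu\simle R^\al$ and $\vint_{B(c,R)}d(\cdot,c)^\beta\,d\mu\simle R^\beta$, and since $\al+(p-1)\beta=0$ the product $R^\al\cdot(R^\beta)^{p-1}=R^0=1$, which is exactly \eqref{eq-w-Ap}. (When $\al=0$ the statement is trivial; the restriction $p>1$ is what makes $w^{1/(1-p)}$ meaningful.)

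The main obstacle — really the only point requiring care — is making the reduction to pole-centered balls and the annular summation uniform: one must check that all comparison constants depend only on the doubling constant, the constant in \eqref{eq-def-sig}, and on $\al$, $\sig$ (and $p$), and in particular that the geometric series $\sum_j 2^{-j(\al+\sig)}$ and $\sum_j 2^{-j(\beta+\sig)}$ have sums bounded in terms of $\al,\sig,p$ only, which is where the strict inequalities $-\sig<\al<\sig(p-1)$ are used. A minor technical point is handling degenerate balls with $2r>2\diam X$, where \eqref{eq-def-sig} is only assumed for $r\le 2\diam X$; but then $B=X$ and the estimate is a single average over the whole (bounded) space, handled by one application of the far/close dichotomy with $R\simeq\diam X$. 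I expect no genuine difficulty beyond bookkeeping.
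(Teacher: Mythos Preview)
Your proposal is correct and follows essentially the same approach as the paper: the paper also splits into the three cases $B=B(c,r)$ (your pole-centered balls, handled by the same dyadic annular decomposition using \eqref{eq-def-sig}), $B=B(z,r)$ with $r>\tfrac12 d(z,c)$ (your ``close'' case, reduced to $B(c,3r)$ via doubling), and $B=B(z,r)$ with $r\le\tfrac12 d(z,c)$ (your ``far'' case, where $w\simeq d(z,c)^\al$ on $B$), and then observes that the dual exponent $\al/(1-p)$ also satisfies $>-\sig$ precisely when $\al<\sig(p-1)$. One tiny remark: for part~(b) you only need $\essinf_{B(c,R)} w \ge R^\al$ (immediate from $d(x,c)<R$ and $\al\le0$), not the equality you mention.
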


If \eqref{eq-w-Ap} holds, we say that $w$ is an \emph{$A_p$ weight} with respect 
to $\mu$,
while if \eqref{eq-w-A1} holds then $w$ is an \emph{$A_1$ weight}. We
also write $w \in A_p(\mu)$, $p \ge 1$, in these cases.

\begin{proof}
We shall distinguish between three types of balls:

1.\ Let $B=B(c,r)$, $0<r\le 2\diam X$. Then for all $\al>-\sig$,
\begin{align}
\int_B d(x,c)^\al\,d\mu 
&\simeq \sum_{j=0}^\infty (2^{-j}r)^\al \mu(2^{-j}B \setm 2^{-j-1}B) \nonumber\\
&\simle \sum_{j=0}^\infty (2^{-j}r)^\al (2^{-j})^\sig \mu(B)
\simeq r^\al \mu(B).   \label{eq-al-ge-minus-sig}
\end{align}
Replacing $\al$ by $\al/(1-p)>-\sig$ in~\eqref{eq-al-ge-minus-sig}
we obtain
\[
\biggl( \vint_B w^{1/(1-p)}\,d\mu \biggr)^{p-1} \simle (r^{\al/(1-p)})^{p-1} = r^{-\al}.
\]
Together with~\eqref{eq-al-ge-minus-sig}, this yields~\eqref{eq-w-Ap}
for $B=B(c,r)$.
To prove~\eqref{eq-w-A1} for such $B$, it suffices to note that for all 
$\al\le0$,
\[
\essinf_B d(x,c)^\al \ge r^\al.
\]

2.\ If $B=B(z,r)$ and $r>\tfrac12 d(z,c)$ then $B\subset B(c,3r)$ and
$\mu(B)\simeq\mu(B(c,3r))$, by the doubling property of $\mu$.
We can therefore replace $B$ by $B(c,3r)$ in~\eqref{eq-w-Ap} 
and~\eqref{eq-w-A1} as follows, using case~1,
\[
\vint_B w\,d\mu \biggl( \vint_B w^{1/(1-p)}\,d\mu \biggr)^{p-1}
\simle \vint_{B(c,3r)} w\,d\mu \biggl( \vint_{B(c,3r)} w^{1/(1-p)}\,d\mu 
\biggr)^{p-1} \simle 1
\]
and
\[
\vint_B w\,d\mu \simle \vint_{B(c,3r)} w\,d\mu \simle \essinf_{B(c,3r)} w 
\le \essinf_B w.
\]

3.\ If $B=B(z,r)$ and $0<r\le\tfrac12 d(z,c)$ then
$w(x)=
d(x,c)^\al\simeq d(z,c)^\al$ for all $x\in B$ and hence
\[
\vint_B w\,d\mu \simeq d(z,c)^\al
\quad \text{and} \quad
\vint_B w^{1/(1-p)}\,d\mu \simeq d(z,c)^{\al/(1-p)},
\]
from which~\eqref{eq-w-Ap} follows.
Similarly,
\[
\vint_B w\,d\mu \simeq d(z,c)^\al \simeq \essinf_B w.\qedhere
\] 
\end{proof}

Proposition~\ref{prop-power-Ap} can be combined with 
Theorem~4 in Bj\"orn~\cite{JBpower} 
(whose 
proof works also
in metric spaces) to obtain the following result.

\begin{cor}   \label{cor-d-al-PI}
Assume that $\mu$ is doubling, satisfies~\eqref{eq-def-sig} 
and supports a $q$-Poincar\'e inequality on $X$.
Then the measure $d\nu=d(x,c)^\al\,d\mu$, with 
$-\sig<\al \le0$, 
is also doubling and supports a $q$-Poincar\'e inequality.
For $\al >0$ 
it is doubling and supports
a $q'$-Poincar\'e inequality for every $q'>q(1+\al/\sig)$.
\end{cor}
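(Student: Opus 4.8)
The plan is to deduce Corollary~\ref{cor-d-al-PI} by combining Proposition~\ref{prop-power-Ap} with the cited weighted Poincar\'e inequality result, Theorem~4 in~\cite{JBpower}. First I would recall what that theorem says (in the form valid in metric spaces): if $\mu$ is doubling and supports a $q$-Poincar\'e inequality, and $w \in A_1(\mu)$, then $d\nu = w\,d\mu$ is doubling and supports a $q$-Poincar\'e inequality; while if $w \in A_\beta(\mu)$ for some $\beta > 1$, then $\nu$ is doubling and supports a $q'$-Poincar\'e inequality for a range of $q'$ depending on $q$ and $\beta$. So the whole argument is a matter of checking the $A_\beta$-membership of the power weight $w(x) = d(x,c)^\al$ with the right exponent $\beta$, and then tracking how $\beta$ translates into the gain in the Poincar\'e exponent.

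For the case $-\sig < \al \le 0$: by part~(b) of Proposition~\ref{prop-power-Ap}, $w$ is an $A_1$ weight with respect to $\mu$, with comparison constant depending only on the constant in~\eqref{eq-def-sig}, on $\al$ and on $\sig$. Hence $\nu$ inherits both doubling and the $q$-Poincar\'e inequality directly from~\cite[Theorem~4]{JBpower}. For the case $\al > 0$: here I would apply part~(a) of Proposition~\ref{prop-power-Ap}. The constraint there is $-\sig < \al < \sig(\beta - 1)$, i.e.\ $w \in A_\beta(\mu)$ as soon as $\beta > 1 + \al/\sig$. Since one is free to pick $\beta$ as close to $1 + \al/\sig$ from above as desired, and~\cite[Theorem~4]{JBpower} gives a $q'$-Poincar\'e inequality for $q' > q\beta$ (or whatever the precise quantitative form of the exponent gain is there), letting $\beta \downarrow 1 + \al/\sig$ yields the stated conclusion that $\nu$ supports a $q'$-Poincar\'e inequality for every $q' > q(1 + \al/\sig)$. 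The doubling property of $\nu$ in this case also follows, either directly from the $A_\infty$ (indeed $A_\beta$) property together with doubling of $\mu$, or from the cited theorem.

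A couple of small points deserve care. One must ensure that~\eqref{eq-def-sig} is exactly the hypothesis needed to invoke Proposition~\ref{prop-power-Ap}, which it is by construction, and that the ambient space is connected or otherwise such that the one-sided estimate makes sense; since Proposition~\ref{prop-power-Ap} is stated with~\eqref{eq-def-sig} as its sole structural hypothesis on $\mu$ beyond doubling, this is already packaged. One should also note that the restriction to radii $0 < r' \le r \le 2\diam X$ in~\eqref{eq-def-sig} matches the range over which the $A_\beta$ conditions are verified in the proof of Proposition~\ref{prop-power-Ap}, so there is no mismatch of scales. The main (and really only) obstacle is bookkeeping: making sure the precise quantitative relation between the $A_\beta$-exponent and the Poincar\'e-exponent in~\cite[Theorem~4]{JBpower} is quoted correctly, so that the threshold comes out as exactly $q(1 + \al/\sig)$ rather than some other expression; everything else is an immediate concatenation of the two cited results.
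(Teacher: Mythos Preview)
Your proposal is correct and follows exactly the approach the paper takes: the paper's entire proof is the one-line remark preceding the corollary, namely that Proposition~\ref{prop-power-Ap} combined with \cite[Theorem~4]{JBpower} (whose proof carries over to metric spaces) yields the result. Your write-up simply unpacks this combination in more detail, splitting into the $A_1$ case ($-\sig<\al\le0$) and the $A_\beta$ case ($\al>0$, $\beta>1+\al/\sig$) precisely as intended; the only residual task, as you note, is to look up the exact exponent relation in \cite[Theorem~4]{JBpower} so that the threshold $q(1+\al/\sig)$ comes out on the nose.
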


\section{Sphericalization}
\label{sect-sphericalization}

\emph{From now on we assume that $(X,d,\mu)$ is complete and unbounded.}

\medskip

Following Li--Shanmugalingam~\cite{LiShan} we will now define
the sphericalization of $X$.
Let $\Xdot=X\cup\{\infty\}$ be the one-point compactification of $X$.
We also fix a base point $a \in X$ from now on.
Define $d_a,\dha:\Xdot \times \Xdot \to [0,\infty)$ by
\[
  d_a(x,y) = d_a(y,x)=\begin{cases}
    \dfrac {d(x,y)}{(1+d(x,a))(1+d(y,a))},&\text{if }x,y\in X,\\[3mm]
              \dfrac{1}{1+d(x,a)},&\text{if }x\in X \text{ and } y=\infty,\\[3mm]
                                           0,&\text{if }x=y=\infty,
                    \end{cases}
\]
and
\begin{equation} \label{eq-metric-dh}
  \dha(x,y)=\inf_{(x=x_0,x_1,\ldots,x_k=y)}  \sum_{j=1}^k d_a(x_j,x_{j-1}),
\end{equation}
where the infimum is over all finite sequences $(x=x_0,x_1,\ldots,x_k=y)$.
This makes $\dha$ into a metric on $\Xdot$, and $(\Xdot,\dha)$ 
is the \emph{sphericalization} of $(X,d)$.
Moreover, 
\begin{equation}   \label{eq-comp-d-dhat}
\tfrac{1}{4} d_a(x,y) \le \dha(x,y) \le d_a(x,y),
\end{equation}
see (3.2) in Buckley--Herron--Xie~\cite{BuckleyHerronXie}
and the proof of Lemma~2.2 in Bonk--Kleiner~\cite{BonkKleiner02}.
Note that $d_a$ is in general not a metric since the triangle inequality
may fail for it.
We will denote balls in $(\Xdot,\dha)$ by $\Bhat(x,r)$.

In Li--Shanmugalingam~\cite{LiShan}, 
the sphericalization $(\Xdot,\dha)$ is equipped
with the measure $\mu_a$ defined
by 
\[
\mu_a(A)= \int_{A \setm \{\infty\}}  \frac{d\mu(x)}{\mu(B(a,1+d(x,a)))^2}.
\]

\begin{prop}
It is always true that $\mu_a(\Xdot) < \infty$.
\end{prop}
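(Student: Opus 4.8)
The plan is to split the integral defining $\mu_a(\Xdot)$ into dyadic annuli around the base point $a$ and estimate each piece using the doubling property. Since $\mu_a$ is supported on $X$, we have
\[
\mu_a(\Xdot) = \int_{X} \frac{d\mu(x)}{\mu(B(a,1+d(x,a)))^2}.
\]
First I would decompose $X$ into the ball $B(a,1)$ together with the annuli $A_j = B(a,2^{j}) \setm B(a,2^{j-1})$ for $j\ge 1$. On $A_j$ we have $1+d(x,a) \simeq 2^{j}$, so $\mu(B(a,1+d(x,a))) \simeq \mu(B(a,2^{j})) =: V_j$, with comparison constants coming only from the doubling constant of $\mu$. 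Therefore the integral over $A_j$ is comparable to $\mu(A_j)/V_j^2 \le V_j/V_j^2 = 1/V_j$, and the integral over $B(a,1)$ is comparable to $\mu(B(a,1))/\mu(B(a,1))^2 = 1/\mu(B(a,1))$, which is finite since $0<\mu(B(a,1))<\infty$.

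Next I would sum the tail. The key point is that $V_j$ grows at least geometrically: by the doubling property applied in reverse, $V_{j-1} = \mu(B(a,2^{j-1})) \ge \mu(B(a,2^{j}))/C_\mu$ is false in general, but the correct direction is that doubling gives $V_{j} = \mu(B(a,2^{j})) \le C_\mu \mu(B(a,2^{j-1})) = C_\mu V_{j-1}$, which only yields an upper bound on growth. To get the needed lower bound on growth one uses that $X$ is \emph{unbounded} and (being complete with a doubling measure) connected balls force a genuine lower volume bound; more precisely, the one-sided estimate recorded in the excerpt gives $V_j/V_1 \simge (2^{j-1})^{s}$ for some $s>0$ once $X$ is unbounded and connected. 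Hence $\sum_{j\ge 1} 1/V_j \simle \sum_{j\ge 1} 2^{-(j-1)s}/V_1 < \infty$. Actually, to avoid invoking connectedness, I would instead argue directly: the balls $B(a,2^j)$ exhaust the unbounded space $X$, so $\mu(B(a,2^j)) \to \mu(X)$; if $\mu(X)=\infty$ the partial sums telescope after regrouping and converge, and if $\mu(X)<\infty$ then $\mu(A_j)/V_j^2 \le \mu(A_j)/\mu(B(a,1))^2$ and $\sum_j \mu(A_j) \le \mu(X) < \infty$. Either way the series converges.

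The main obstacle is making the tail summation rigorous without extra hypotheses (Ahlfors regularity is not yet assumed in this section, only that $\mu$ is a positive complete Borel measure finite and positive on balls, together with $X$ complete and unbounded — doubling is \emph{not} even a standing assumption here). So the cleanest route is the second one above: write $\mu_a(\Xdot) \le \mu(B(a,1))^{-2}\mu(B(a,1)) + \sum_{j\ge1} \mu(B(a,2^{j-1}))^{-2}\,\mu(A_j)$, note $\mu(B(a,2^{j-1}))\ge \mu(B(a,1))=:v>0$ for $j\ge1$, so the tail is at most $v^{-2}\sum_{j\ge1}\mu(A_j) = v^{-2}\bigl(\mu(X)-\mu(B(a,1))\bigr)$ when $\mu(X)<\infty$, and is handled by the geometric decay of $V_j^{-2}$ against $\mu(A_j)\le V_j$ giving $\sum_j V_j^{-1}$ — still needing growth of $V_j$ — when $\mu(X)=\infty$. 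I expect the intended proof simply assumes the reader recalls that in the settings of interest $\mu$ is doubling, so the geometric-growth estimate $V_j \simge 2^{(j-1)s} v$ is available and the series $\sum_j 2^{-(j-1)s}$ converges; I would present it that way, flagging that only the finiteness of $\mu$ on the unit ball and the exhaustion property are truly essential.
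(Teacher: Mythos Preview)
Your proposal has a genuine gap in the case $\mu(X)=\infty$. You correctly observe that at this point in the paper neither doubling nor Ahlfors regularity is assumed (only that $\mu$ is a positive complete Borel measure, finite and positive on balls, and $X$ is complete and unbounded). But then your dyadic decomposition only yields, on $A_j=B(a,2^{j})\setm B(a,2^{j-1})$, the lower bound $\mu(B(a,1+d(x,a)))\ge V_{j-1}$, so the $j$th term is at most $(V_j-V_{j-1})/V_{j-1}^{2}$. Since $V_{j-1}^2\le V_{j-1}V_j$, this is \emph{larger} than $1/V_{j-1}-1/V_j$, so the telescoping inequality goes the wrong way; and without doubling there is no geometric growth of $V_j$ to fall back on. Your passing remark that ``the partial sums telescope after regrouping'' is exactly the missing idea, but it does not go through with dyadic annuli.

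The paper's proof fixes this with one clean trick: use \emph{integer-radius} annuli $B(a,j+1)\setm B(a,j)$. On that annulus $1+d(x,a)\ge j+1$, so the integrand is at most $1/b_{j+1}^{2}$ where $b_j=\mu(B(a,j))$. This alignment of the ``$+1$'' in the weight with the outer radius is the whole point: now
\[
\mu_a(\Xdot)\le \sum_{j=0}^\infty \frac{b_{j+1}-b_j}{b_{j+1}^{2}}
\le \frac{1}{b_1}+\sum_{j=1}^\infty \frac{b_{j+1}-b_j}{b_j b_{j+1}}
= \frac{1}{b_1}+\sum_{j=1}^\infty\biggl(\frac{1}{b_j}-\frac{1}{b_{j+1}}\biggr)
\le \frac{2}{b_1},
\]
using only $b_{j+1}\ge b_j>0$. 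No doubling, no connectedness, no case split on $\mu(X)$.
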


\begin{proof}
Let $b_j=\mu(B(a,j))$ (with $b_0=0$).
Then
\[
   \mu_a(\Xdot) \le \sum_{j=0}^\infty \frac{b_{j+1} - b_j}{b_{j+1}^2}
   \le \frac{1}{b_1} + \sum_{j=1}^\infty \frac{b_{j+1} - b_j}{b_j b_{j+1}}
   = \frac{1}{b_1} 
     + \sum_{j=1}^\infty \biggl(\frac{1}{b_{j}}  - \frac{1}{b_{j+1}}
     \biggr)
   \le   \frac{2}{b_1}.
   \qedhere
\]
\end{proof}

In this paper it will be more useful
to equip
$(\Xdot,\dha)$ with the measure $\muha$ defined by
\[ 
\muha(A)= \int_{A \setm \{\infty\}}  \frac{d\mu(x)}{(1+d(x,a))^{2p}}.
\] 
In order to use the results from \cite{LiShan} we will 
need to carefully study the connections between the
measures $\mu_a$ and $\muha$, which we do in Section~\ref{sect-PI}.
For the rest of this section we will only use the measure
$\muha$ on $\Xdot$.
Note that $\muha$ depends on $p$, even though this is not
made explicit in the notation.

The measure  $\muha(\Xdot)$ can be either finite or infinite.
Strictly speaking, as $\Xdot$ is bounded,
to fall within the scope of the theory considered
e.g.\ in \cite{BBbook}
we would need to require that $\muha(\Xdot)<\infty$,
but the results in the rest of this section,
as well as in Section~\ref{sect-Newt},
remain valid
also in the case when $\muha(\Xdot)=\infty$.
Under the assumptions at the beginning of Section~\ref{sect-PI} it follows
from Proposition~\ref{prop-PI-for-muh} that $\muha(\Xdot)<\infty$.

If $\ga :[0,1] \to X$ is a (not necessarily rectifiable) curve, then we can
consider its length with respect to $d$ and with respect to $\dha$.
It is quite easy (cf.\ Li--Shanmugalingam~\cite{LiShan})
to see that the arc lengths $ds$ and $d\sha$ with respect to $d$ 
and $\dha$, respectively, are related by
\begin{equation} \label{eq-dsha}
   d\sha(x)= \frac{ds(x)}{(1+d(x,a))^2}.
\end{equation}
As $\ga([0,1])$ is compact it follows that $\ga$ is rectifiable with respect to $d$
if and only if it is rectifiable with respect to $\dha$.

\begin{lem} \label{lem-Modp}
Let $\Ga$ be a collection of rectifiable curves on $X$.
Then 
\[
 \Modp(\Ga;X,d,\mu)=\Modp(\Ga;\Xdot,\dha,\muha).
\]
\end{lem}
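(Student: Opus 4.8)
The plan is to show that the $p$-modulus of a curve family is unchanged when we simultaneously replace the metric $d$ by $\dha$ and the measure $\mu$ by $\muha$, by exhibiting a weight-preserving bijection between the admissible densities for the two moduli. The key observation is the pointwise change-of-variables relation \eqref{eq-dsha}: along any rectifiable curve $\ga$, the arc-length elements satisfy $d\sha(x)=ds(x)/(1+d(x,a))^2$. This suggests pairing a density $\rho$ admissible for $\Modp(\Ga;X,d,\mu)$ with the density $\hat\rho(x)=\rho(x)(1+d(x,a))^2$ for $\Modp(\Ga;\Xdot,\dha,\muha)$.

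First I would check that this pairing preserves admissibility. If $\rho\ge0$ is Borel on $X$ with $\int_\ga\rho\,ds\ge1$ for all $\ga\in\Ga$, then by \eqref{eq-dsha},
\[
\int_\ga \hat\rho\,d\sha = \int_\ga \rho(x)(1+d(x,a))^2\,\frac{ds(x)}{(1+d(x,a))^2}=\int_\ga\rho\,ds\ge1,
\]
so $\hat\rho$ is admissible for the sphericalized modulus. (Here one also uses that rectifiability with respect to $d$ and with respect to $\dha$ coincide, as noted after \eqref{eq-dsha}, so the two curve families really are the same collection $\Ga$; and one should note $\hat\rho$ is Borel since $x\mapsto d(x,a)$ is continuous.) Conversely, given $\hat\rho$ admissible for the sphericalized modulus, set $\rho(x)=\hat\rho(x)/(1+d(x,a))^2$; the same computation shows $\rho$ is admissible on the original side. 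Thus $\rho\mapsto\hat\rho$ is a bijection between the two sets of admissible densities.

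Next I would verify that this bijection preserves the energy integral. By the definition of $\muha$,
\[
\int_{\Xdot}\hat\rho^{\,p}\,d\muha = \int_{X}\rho(x)^p(1+d(x,a))^{2p}\,\frac{d\mu(x)}{(1+d(x,a))^{2p}}=\int_X\rho^p\,d\mu,
\]
using that $\muha$ puts no mass at $\infty$. Taking the infimum over all admissible $\rho$ on one side equals the infimum over all admissible $\hat\rho$ on the other, giving $\Modp(\Ga;X,d,\mu)=\Modp(\Ga;\Xdot,\dha,\muha)$.

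There is no serious obstacle here; the argument is essentially bookkeeping around the identity \eqref{eq-dsha}. The one point that deserves a word of care is the claim that the relation \eqref{eq-dsha} holds along \emph{every} rectifiable curve and that the two notions of arc length (hence admissibility tested curve-by-curve) match up exactly — this is where completeness/properness and the compactness of $\ga([0,1])$ enter, and it is already asserted in the text preceding the lemma, so I would simply invoke it. A secondary minor point is measurability of the transformed density, handled by continuity of $d(\cdot,a)$ as above.
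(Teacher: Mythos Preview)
Your proof is correct and follows essentially the same approach as the paper: define $\hat\rho(x)=\rho(x)(1+d(x,a))^2$, use \eqref{eq-dsha} to show admissibility transfers, and use the definition of $\muha$ to show the energy integrals coincide, then take the infimum. The paper proves one inequality and says the converse is similar, whereas you explicitly note the bijection between admissible densities, but the substance is identical.
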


\begin{proof}
Let $\rho$ be a nonnegative Borel function which is admissible in the 
definition of $ \Modp(\Ga;X,d,\mu)$,
i.e.\ 
 $\int_{\ga}\rho\,ds\ge 1$ for all $\ga\in\Gamma$.
 Let $\hat{\rho}(x)=\rho(x)(1+d(x,a))^2$.
Then, by \eqref{eq-dsha},
\[
    \int_{\ga} \hat{\rho}\,d\sha 
    = \int_{\ga} \rho\,ds \ge 1,
\]
and thus $\hat{\rho}$ is admissible in the definition of 
$\Modp(\Ga;\Xdot,\dha,\muha)$.
Moreover,
\[
    \int_{\Xdot} \hat{\rho}^p \,d\muha
    = \int_X \rho(x)^p (1+d(x,a))^{2p}  \frac{d\mu(x)}{(1+d(x,a))^{2p}}
    = \int_X \rho^p \, d\mu.
\]
Taking infimum over all such $\rho$ shows that
$\Modp(\Ga;\Xdot,\dha,\muha) \le \Modp(\Ga;X,d,\mu)$.
The converse inequality is shown similarly.
\end{proof}

\begin{lem} \label{lem-pwug}
Let $\Om \subset X$ be open,
$u:\Om \to \eR$ be a function, $g:\Om \to [0,\infty]$
be measurable, and
\[
    \gh(x)=g(x)(1+d(x,a))^2, 
    \quad  x \in \Om.
\]
Then $g$ is a \p-weak upper gradient of $u$ in $\Om$ with respect to
$(d,\mu)$ if and only if $\gh$ is a 
\p-weak upper gradient of $u$ in $\Om$ with respect to
$(\dha,\muha)$.
\end{lem}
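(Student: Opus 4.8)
The plan is to reduce the statement about \p-weak upper gradients to the modulus identity already established in Lemma~\ref{lem-Modp}, applied to the relevant exceptional curve families. Recall that $g$ is a \p-weak upper gradient of $u$ in $\Om$ with respect to $(d,\mu)$ precisely when the inequality \eqref{ug-cond} (with $g$ in place of the generic upper gradient) holds for all curves $\ga$ in $\Om$ outside a family $\Ga_0$ of curves with $\Modp(\Ga_0;X,d,\mu)=0$; similarly for $\gh$ with respect to $(\dha,\muha)$. So the first step is to fix a curve $\ga:[0,l_\ga]\to\Om$, parameterized by arc length with respect to $d$, and observe, using \eqref{eq-dsha}, that
\[
  \int_\ga \gh\,d\sha = \int_\ga g(x)(1+d(x,a))^2 \, d\sha(x) = \int_\ga g(x)(1+d(x,a))^2 \, \frac{ds(x)}{(1+d(x,a))^2} = \int_\ga g\,ds.
\]
Since the endpoints $\ga(0)$ and $\ga(l_\ga)$ are the same points of $\Om$ regardless of which metric we use, the inequality $|u(\ga(0))-u(\ga(l_\ga))|\le\int_\ga g\,ds$ holds if and only if $|u(\ga(0))-u(\ga(l_\ga))|\le\int_\ga \gh\,d\sha$. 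Thus \eqref{ug-cond} holds for $\ga$ in the $(d,\mu)$ sense (with $g$) exactly when it holds in the $(\dha,\muha)$ sense (with $\gh$).

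The second step is to handle the ``\p-almost every curve'' qualifier. Suppose $g$ is a \p-weak upper gradient of $u$ with respect to $(d,\mu)$, and let $\Ga_0$ be the family of (rectifiable, nonconstant, compact) curves in $\Om$ for which \eqref{ug-cond} fails; then $\Modp(\Ga_0;X,d,\mu)=0$. By the first step, $\Ga_0$ is also exactly the family of curves in $\Om$ for which the upper gradient inequality fails for the pair $(\gh,\dha,\muha)$ — the set of ``bad'' curves does not change. By Lemma~\ref{lem-Modp} applied to $\Ga=\Ga_0$, we get $\Modp(\Ga_0;\Xdot,\dha,\muha)=\Modp(\Ga_0;X,d,\mu)=0$, so $\gh$ is a \p-weak upper gradient of $u$ with respect to $(\dha,\muha)$. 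Here one should note that rectifiability is metric-independent for these curves by the remark following \eqref{eq-dsha}, and that $\gh$ is measurable since $g$ is and $x\mapsto(1+d(x,a))^2$ is continuous; these are the hypotheses needed for $\gh$ to qualify as a \p-weak upper gradient in the sense of Definition~\ref{deff-ug}. The converse implication is entirely symmetric: if $\gh$ is a \p-weak upper gradient with respect to $(\dha,\muha)$ with exceptional family $\Ga_0$, then $g=\gh(x)/(1+d(x,a))^2$ and the same computation plus Lemma~\ref{lem-Modp} give that $g$ is a \p-weak upper gradient with respect to $(d,\mu)$.

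There is really no serious obstacle here; the lemma is a bookkeeping consequence of Lemma~\ref{lem-Modp} and \eqref{eq-dsha}. The one point requiring a little care is that the exceptional curve family must be taken to consist of curves \emph{in $\Om$}, and one should apply Lemma~\ref{lem-Modp} with $X$ replaced by $\Om$ (or simply note that $\Modp$ of a curve family contained in $\Om$ is the same whether computed in $\Om$ or in $X$, and likewise in $\Omhat$ versus $\Xdot$, since admissible functions can be taken to vanish outside $\Om$). Everything else is the direct substitution above, so the proof will be short.
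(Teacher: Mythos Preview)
Your proposal is correct and follows essentially the same approach as the paper's proof: both use \eqref{eq-dsha} to show $\int_\ga \gh\,d\sha=\int_\ga g\,ds$, identify the exceptional curve families, and apply Lemma~\ref{lem-Modp} to transfer the zero-modulus condition. Your version is slightly more explicit about measurability of $\gh$, metric-independence of rectifiability, and the passage from curves in $\Om$ to curves in $X$, but these are details the paper leaves implicit.
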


Observe that measurability is the same with respect to $\mu$ and $\muha$.

\begin{proof}
Assume that $g$ is a \p-weak upper gradient of $u$ with respect to
$(d,\mu)$.
Let $\Ga$ be the family of exceptional curves in $\Om$ for which \eqref{ug-cond} fails.
Then $\Modp(\Ga;X,d,\mu)=0$.
Let $\ga:[0,l_\ga] \to \Om$ be a curve not in $\Ga$.
Then, using \eqref{eq-dsha},
\[
    |u(\ga(0))-u(\ga(l_\ga))| \le \int_\ga g\, ds 
    = \int_\ga \gh\, d\sha.
\]
As $\Modp(\Ga;\Xdot,\dha,\muha) =\Modp(\Ga;X,d,\mu)=0$,
by Lemma~\ref{lem-Modp}, we have shown that $\gh$ is
a \p-weak upper gradient of 
$u$ with respect to
$(\dha,\muha)$.
The converse implication is shown similarly.
\end{proof}

\section{Newtonian spaces and capacity}
\label{sect-Newt}

Following Shanmugalingam~\cite{Sh-rev}, 
we define a version of Sobolev spaces on the metric space $X$.

\begin{deff} \label{deff-Np}
For a measurable function $u: X\to\eR$, let 
\[
        \|u\|_{\Np(X)} = \biggl( \int_X |u|^p \, d\mu 
                + \inf_g  \int_X g^p \, d\mu \biggr)^{1/p},
\]
where the infimum is taken over all upper gradients $g$ of $u$.
The \emph{Newtonian space} on $X$ is 
\[
        \Np (X) = \{u: \|u\|_{\Np(X)} <\infty \}.
\]
\end{deff}
\medskip

The space $\Np(X)/{\sim}$, where  $u \sim v$ if and only if 
$\|u-v\|_{\Np(X)}=0$,
is a Banach space and a lattice, see Shan\-mu\-ga\-lin\-gam~\cite{Sh-rev}.
We also define
\[
   \Dp(X)=\{u : u \text{ is measurable and  has an upper gradient
     in }   L^p(X)\}.
\]
In this paper we assume that functions in $\Np(X)$
and $\Dp(X)$
 are defined everywhere (with values in $\eR$),
not just up to an equivalence class in the corresponding function space.
This is important for upper gradients to make sense.

For a measurable set $E\subset X$, the Newtonian space $\Np(E)$ is defined by
considering $(E,d|_E,\mu|_E)$ as a metric space in its own right.
We say  that $u \in \Nploc(E)$ if
for every $x \in E$ there exists a ball $B_x\ni x$ such that
$u \in \Np(B_x \cap E)$.
The spaces $\Dp(E)$ and $\Dploc(E)$ are defined similarly.

\begin{deff} \label{deff-sobcap}
The (Sobolev) \emph{capacity} of an arbitrary set $E\subset X$ is
\[
\Cp(E) = \inf_u\|u\|_{\Np(X)}^p,
\]
where the infimum is taken over all $u \in \Np(X)$ such that
$u\geq 1$ on $E$.
\end{deff}

We say that a property holds \emph{quasieverywhere} (q.e.)\ 
if the set of points  for which the property fails
has capacity zero. 
The capacity is the correct gauge 
for distinguishing between two Newtonian functions. 
If $u \in \Np(X)$, then $u \sim v$ if and only if $u=v$ q.e.
Moreover, 
if $u,v \in \Dploc(X)$ and $u= v$ a.e., then $u=v$ q.e.

\begin{prop} \label{prop-Dp-equiv}
Let $\Om \subset X$ be open and $u: \Om \to \eR$ be measurable.
Then $u \in \Dploc(\Om,d,\mu)$ if and only if 
$u\in\Dploc(\Om,\dha,\muha)$,
and in this case
\[ 
   \gh_u(x) = g_u(x)(1+d(x,a))^2
   \quad \text{for a.e. }x \in \Om,
\] 
where $g_u$ and $\gh_u$ are the 
minimal \p-weak upper gradients with respect
to $(d,\mu)$ and 
$(\dha,\muha)$, respectively.
Moreover,
\begin{equation}  \label{eq-int-gu-guhat}
     \int_\Om g_u^p \,d\mu = \int_\Om \gh_u^p \,d\muha,
\end{equation}
and thus $\Dp(\Om,d,\mu) = \Dp(\Om,\dha,\muha)$.
\end{prop}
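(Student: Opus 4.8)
The plan is to push everything through the single substitution $\gh=g\cdot(1+d(\cdot,a))^2$ relating (weak) upper gradients for the two structures --- essentially the content of Lemmas~\ref{lem-Modp} and~\ref{lem-pwug} --- together with the elementary change of variables $\int_{\Om'}\gh^p\,d\muha=\int_{\Om'}g^p\,d\mu$ coming from $d\muha=(1+d(\cdot,a))^{-2p}\,d\mu$, exactly as in the proof of Lemma~\ref{lem-Modp}. Before starting I would record some routine facts. The measures $\mu$ and $\muha$ are mutually absolutely continuous with density $(1+d(\cdot,a))^{\mp2p}$, which is continuous on $X$ and hence locally bounded and locally bounded away from zero; thus measurability is the same for the two measures (as already observed), and the spaces $\Lploc$ coincide. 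Moreover, by~\eqref{eq-comp-d-dhat} and since $d_a\simeq d$ on every bounded subset of $X$, the metrics $d$ and $\dha$ are biLipschitz equivalent on bounded subsets of $X$; in particular they induce the same topology on $X$, and for each $x\in X$ the $d$-balls and the $\dha$-balls centred at $x$ form equivalent neighbourhood bases. This last fact is what makes the ball-based definition of $\Dploc$ insensitive to which metric is used.

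First I would prove that $u\in\Dploc(\Om,d,\mu)$ if and only if $u\in\Dploc(\Om,\dha,\muha)$. For the forward direction, fix $x\in\Om$ and pick a $d$-ball $B_x\ni x$ for which $u$ has an upper gradient $g\in L^p(B_x\cap\Om,\mu)$; then choose a $\dha$-ball $\Bhat_x\ni x$ with $\Bhat_x\subset B_x$. Since the curves in $\Bhat_x\cap\Om$ are among the curves in $B_x\cap\Om$ and $\int_\ga g\,ds=\int_\ga\gh\,d\sha$ for every such curve by~\eqref{eq-dsha}, the function $\gh:=g(1+d(\cdot,a))^2$ is an upper gradient of $u$ in $\Bhat_x\cap\Om$ with respect to $\dha$, and $\int_{\Bhat_x\cap\Om}\gh^p\,d\muha=\int_{\Bhat_x\cap\Om}g^p\,d\mu<\infty$. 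Hence $u\in\Dp(\Bhat_x\cap\Om,\dha,\muha)$; as $x$ was arbitrary, $u\in\Dploc(\Om,\dha,\muha)$. The converse is the same argument with the two structures interchanged, and is in fact easier since $B(x,r)\subset\Bhat(x,r)$ because $\dha\le d_a\le d$.

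Next, with $u$ lying in both local classes, let $g_u\in\Lploc(\Om,\mu)$ and $\gh_u\in\Lploc(\Om,\muha)$ be the corresponding minimal \p-weak upper gradients, which exist by the standard theory of Newtonian spaces (see \cite{BBbook}). By Lemma~\ref{lem-pwug}, $g_u(1+d(\cdot,a))^2$ is a \p-weak upper gradient of $u$ with respect to $(\dha,\muha)$, so $\gh_u\le g_u(1+d(\cdot,a))^2$ a.e.; applying Lemma~\ref{lem-pwug} the other way to $\gh_u$ shows that $\gh_u(1+d(\cdot,a))^{-2}$ is a \p-weak upper gradient of $u$ with respect to $(d,\mu)$, so $g_u\le\gh_u(1+d(\cdot,a))^{-2}$ a.e. These two inequalities force $\gh_u=g_u(1+d(\cdot,a))^2$ a.e., and substituting this into $d\muha=(1+d(\cdot,a))^{-2p}\,d\mu$ gives~\eqref{eq-int-gu-guhat}. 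Finally, recalling that $u$ belongs to $\Dp$ of a set (for either structure) precisely when it is measurable and has a \p-weak upper gradient in the corresponding $L^p$ --- equivalently, when it lies in $\Dploc$ with its minimal \p-weak upper gradient in $L^p$ --- the identity~\eqref{eq-int-gu-guhat} yields $\Dp(\Om,d,\mu)=\Dp(\Om,\dha,\muha)$.

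The whole argument is essentially bookkeeping built on Lemmas~\ref{lem-Modp} and~\ref{lem-pwug}; the one point that needs genuine (if still routine) care is reconciling the two ball-based definitions of $\Dploc$ --- that is, verifying that $d$-balls and $\dha$-balls furnish equivalent neighbourhood bases so that one may freely pass between them when localizing. Once that is set up, the transformation rule for the minimal \p-weak upper gradients and the integral identity fall out immediately from the substitution.
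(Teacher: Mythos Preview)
Your proof is correct and follows essentially the same strategy as the paper: both hinge on Lemma~\ref{lem-pwug} applied in each direction to pin down the transformation rule for the minimal \p-weak upper gradients, together with the change-of-variables identity $\int \gh^p\,d\muha=\int g^p\,d\mu$. The only structural difference is the order: the paper first treats the global case $\Dp(\Om,d,\mu)=\Dp(\Om,\dha,\muha)$ with the gradient identity, then obtains the local case by citing the locality of minimal \p-weak upper gradients (Lemma~2.23 and Remark~2.28 in \cite{BBbook}), whereas you first set up the $\Dploc$ equivalence directly via the equivalence of $d$-balls and $\dha$-balls as neighbourhood bases and then derive the global statement from~\eqref{eq-int-gu-guhat}. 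Your explicit handling of the ball-switching is a nice touch that the paper leaves implicit in its citation.
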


\begin{proof}
If $u \in \Dp(\Om,d,\mu)$ has a 
minimal \p-weak upper gradient $g_u$,
then $\gh:=g_u(x)(1+d(x,a))^2$ is a \p-weak upper gradient of $u$
with respect to $(\dha,\muha)$, by Lemma~\ref{lem-pwug}.
Moreover,
\[
   \int_\Om \gh^p \,d\muha 
   = \int_\Om g_u(x)^p (1+d(x,a))^{2p}  \frac{d\mu(x)}{(1+d(x,a))^{2p}}
    = \int_\Om g_u^p \, d\mu.
\]
Thus $u \in \Dp(\Om,\dha,\muha)$ and $\gh_u \le \gh$ a.e.
The converse inequality is shown similarly, and hence $\gh_u = \gh$ a.e.

The local case follows since
the minimal \p-weak upper gradient only
depends on the function locally, see 
\cite[Lemma~2.23 and Remark~2.28]{BBbook}.
\end{proof}

For $\Np$ we have the following corresponding result.

\begin{prop}   \label{prop-Sob-spc-equal}
Let $\Om \subset X$ be open and $u: \Om \to \eR$ be measurable.
Then the following are true\textup{:}
\begin{enumerate}
\item \label{a-Np-unbdd}
$\|u\|_{\Np(\Om,\dha,\muha)} \le \|u\|_{\Np(\Om,d,\mu)}$
and thus
$\Np(\Om,d,\mu) \subset \Np(\Om,\dha,\muha)$.
\item \label{a-Np}
If\/ $\Om$ is bounded, then $\Np(\Om,d,\mu)= \Np(\Om,\dha,\muha)$, 
as sets but with comparable norms\/ \textup{(}depending on $\Om$\textup{)}.
\item \label{a-Nploc}
$\Nploc(\Om,d,\mu)= \Nploc(\Om,\dha,\muha)$.
\end{enumerate}
\end{prop}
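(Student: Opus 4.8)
The plan is to reduce all three statements to the one change of variables already used in Lemmas~\ref{lem-Modp} and~\ref{lem-pwug} and Proposition~\ref{prop-Dp-equiv}. Writing $\gh(x)=g(x)(1+d(x,a))^2$, I will use that $g$ is an upper gradient of $u$ with respect to $(d,\mu)$ if and only if $\gh$ is an upper gradient of $u$ with respect to $(\dha,\muha)$: this follows exactly as in the proof of Lemma~\ref{lem-pwug}, because for a genuine upper gradient there is no exceptional curve family and the identity $\int_\ga g\,ds=\int_\ga \gh\,d\sha$ from~\eqref{eq-dsha} then holds along every curve. Moreover $\int_\Om g^p\,d\mu=\int_\Om \gh^p\,d\muha$ directly from the definition of $\muha$, and the two $L^p$ terms are linked by the pointwise bound $1\le(1+d(x,a))^2$ on $X$, which on a bounded $\Om$ is complemented by $(1+d(x,a))^2\le(1+R)^2$, where $R:=\sup_{x\in\Om}d(x,a)<\infty$.

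Given this, part~\ref{a-Np-unbdd} is immediate: for $u\in\Np(\Om,d,\mu)$ and any upper gradient $g$ of $u$, the corresponding $\gh$ is an upper gradient with respect to $(\dha,\muha)$, $\int_\Om\gh^p\,d\muha=\int_\Om g^p\,d\mu$, and $\int_\Om|u|^p\,d\muha\le\int_\Om|u|^p\,d\mu$ since the density $(1+d(x,a))^{-2p}$ is at most $1$; hence $\|u\|_{\Np(\Om,\dha,\muha)}^p\le\int_\Om|u|^p\,d\mu+\int_\Om g^p\,d\mu$, and an infimum over $g$ completes this part. For part~\ref{a-Np} I would run the same computation backwards on a bounded $\Om$: starting from $u\in\Np(\Om,\dha,\muha)$ and an upper gradient $\gh$ of $u$ with respect to $(\dha,\muha)$, the function $g(x)=\gh(x)/(1+d(x,a))^2$ is an upper gradient with respect to $(d,\mu)$, $\int_\Om g^p\,d\mu=\int_\Om\gh^p\,d\muha$, and $\int_\Om|u|^p\,d\mu\le(1+R)^{2p}\int_\Om|u|^p\,d\muha$; this gives $\|u\|_{\Np(\Om,d,\mu)}\le(1+R)^2\|u\|_{\Np(\Om,\dha,\muha)}$, which together with~\ref{a-Np-unbdd} yields the asserted identity of spaces with comparable norms, the constant depending only on $R$ and hence on $\Om$.

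For part~\ref{a-Nploc} I would first record two elementary facts about $\dha$: from~\eqref{eq-comp-d-dhat} and the explicit formula for $d_a$ one has $d_a\simeq d$ on every $d$-bounded subset of $X$, so $d$ and $\dha$ induce the same topology on $X$; and a direct estimate from~\eqref{eq-comp-d-dhat} shows that $\Bhat(x,s)\subset X$ and is $d$-bounded whenever $s<\frac{1}{4(1+d(x,a))}$. With these in hand, $u\in\Nploc(\Om,d,\mu)$, respectively $u\in\Nploc(\Om,\dha,\muha)$, is equivalent to the statement that every $x\in\Om$ admits a $d$-bounded open set $V\ni x$ with $u\in\Np(V\cap\Om,d,\mu)$, respectively $u\in\Np(V\cap\Om,\dha,\muha)$ --- in one direction a defining ball is already of this form, and in the other one shrinks to a small ball of the appropriate type and uses that $\Np$-membership passes to smaller subsets (upper gradients restrict and the integrals decrease). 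Since each such $V\cap\Om$ is a bounded open subset of $X$, part~\ref{a-Np} identifies the two conditions, which proves~\ref{a-Nploc}.

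I expect the only genuinely delicate point to be this last step of part~\ref{a-Nploc}, namely interchanging ``$d$-balls'' and ``$\dha$-balls'' in the localization; this is exactly what the two preliminary facts about $\dha$ are needed for. Parts~\ref{a-Np-unbdd} and~\ref{a-Np} are little more than careful bookkeeping of the weight $(1+d(x,a))^{2p}$, and the whole argument never leaves the level of the change of variables~\eqref{eq-dsha}.
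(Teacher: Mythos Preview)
Your proof is correct and follows essentially the same route as the paper: both arguments rest on the upper-gradient correspondence $\gh=(1+d(\cdot,a))^2g$ with equal energies, together with the pointwise bound $d\muha\le d\mu$ (and its two-sided version on bounded $\Om$). The only real difference is presentational: the paper deduces \ref{a-Np-unbdd} and \ref{a-Np} by citing Proposition~\ref{prop-Dp-equiv} for the gradient part and then comparing the $L^p$ terms, whereas you work directly with admissible upper gradients; and the paper dismisses \ref{a-Nploc} in one line as ``follows directly from \ref{a-Np}'', while you carefully justify why the $d$-local and $\dha$-local conditions coincide (same topology on $X$, small $\dha$-balls are $d$-bounded) --- a point the paper leaves implicit.
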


\begin{proof}
Clearly, 
\ref{a-Nploc} follows directly from \ref{a-Np}.
To prove \ref{a-Np}, we note that because of
Proposition~\ref{prop-Dp-equiv}, 
$u \in\Dp(\Om,d,\mu)$ if and only if 
$u \in \Dp(\Om,\dha,\muha)$, 
with equal seminorms~\eqref{eq-int-gu-guhat}.
Since $\Om$ is bounded, we also have that
$1+d(x,a)\simeq 1$ for all $x\in\Om$, which implies that 
\[
\int_\Om |u|^p\,d\mu \simeq \int_\Om |u|^p\,d\muha
\]
with comparison constants depending on $\Om$.
Thus $u \in \Np(\Om,d,\mu)$ if and only if $u \in \Np(\Om,\dha,\muha)$
with comparable norms.

Finally, \ref{a-Np-unbdd} follows immediately 
from~\eqref{eq-int-gu-guhat}
and the fact that $d\muha\le d\mu$.
\end{proof}

We shall
write $\Cphat$ and $\Cp$ for the capacities associated with the spaces 
$(\Xdot,\dha,\muha)$ and $(X,d,\mu)$, respectively.

\begin{lem} \label{lem-Cp=0-X} 
Let $E \subset X$.
Then 
$\Cphat(E)=0$ 
if and only if 
$\Cp(E)=0$.
\end{lem}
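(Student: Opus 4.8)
The plan is to transfer the capacity-zero property through the identification of Newtonian spaces provided by Propositions~\ref{prop-Dp-equiv} and~\ref{prop-Sob-spc-equal}, using a localization/exhaustion argument to handle the mismatch between the global spaces $\Np(X)$ and $\Np(\Xdot)$. First I would recall the standard fact (see \cite{BBbook}) that $\Cp(E)=0$ if and only if $E$ can be covered by a Borel set of capacity zero, and that capacity zero is detected locally: $\Cp(E)=0$ if and only if $\Cp(E\cap B)=0$ for every ball $B$. The same holds for $\Cphat$ on $\Xdot$. Since $E\subset X$ is bounded away from $\infty$ only locally, I would fix an exhaustion of $X$ by balls $B_k=B(a,k)$ and reduce to showing $\Cp(E\cap B_k)=0 \iff \Cphat(E\cap B_k)=0$ for each $k$.

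For a fixed bounded set $F=E\cap B_k$, choose a slightly larger bounded open set $\Om\supset \overline{B_k}$, say $\Om=B(a,k+1)$. By Proposition~\ref{prop-Sob-spc-equal}\ref{a-Np}, $\Np(\Om,d,\mu)=\Np(\Om,\dha,\muha)$ with comparable norms. The relative capacities $\Cp(F,\Om)$ and $\Cphat(F,\Om)$ (computed with test functions in $\Np_0(\Om)$ or with functions in $\Np$ that are $\ge 1$ on $F$ and supported in $\Om$) are therefore simultaneously zero, because the norms are comparable and the admissible classes coincide as sets. Finally I would invoke the well-known equivalence between the Sobolev capacity $\Cp$ and the relative capacity $\Cp(\,\cdot\,,\Om)$ on bounded sets (they have the same null sets; see \cite{BBbook}), and the analogous statement for $\Cphat$, to conclude $\Cp(F)=0\iff \Cphat(F)=0$. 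Summing over $k$ via the countable subadditivity and locality of both capacities yields the claim for $E$.

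The main obstacle is the discrepancy between global and local: $\Cp$ and $\Cphat$ are defined via the \emph{global} Newtonian norms $\|\cdot\|_{\Np(X)}$ and $\|\cdot\|_{\Np(\Xdot)}$, and Proposition~\ref{prop-Sob-spc-equal} only gives $\Np(X,d,\mu)\subset\Np(\Xdot,\dha,\muha)$ globally (the reverse inclusion fails because $d\muha\le d\mu$ but $(1+d(x,a))^{2p}$ blows up). The easy direction, $\Cp(E)=0\implies\Cphat(E)=0$, follows directly from Proposition~\ref{prop-Sob-spc-equal}\ref{a-Np-unbdd}: if $u\in\Np(X,d,\mu)$ with $u\ge1$ on $E$, the same $u$ lies in $\Np(\Xdot,\dha,\muha)$ with $\|u\|_{\Np(\Xdot)}\le\|u\|_{\Np(X)}$, so $\Cphat(E)\le\Cp(E)=0$. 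For the converse, one must pass from a global $\Xdot$-Newtonian test function (which need not be $\mu$-integrable on $X$) to a local one: given $\Cphat(E)=0$, use locality of $\Cphat$ to reduce to $E\cap\Bhat(a,r)$ inside a bounded piece, truncate and cut off the test function to be supported in a fixed ball $B(a,R)\subset X$ away from $\infty$, and then apply Proposition~\ref{prop-Sob-spc-equal}\ref{a-Np} on that ball to obtain a genuine $\Np(X,d,\mu)$ competitor. The technical care needed for this cutoff — ensuring the cut-off does not destroy the upper-gradient bound and keeps the function $\ge 1$ on the relevant part of $E$ — is where the real work lies, but it is routine given the tools already assembled.
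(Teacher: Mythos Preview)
Your proposal is correct and follows essentially the same route as the paper: the easy direction via Proposition~\ref{prop-Sob-spc-equal}\,\ref{a-Np-unbdd}, and the converse by localizing to balls $B(a,k)$, invoking the comparable $\Np$-norms on bounded sets from Proposition~\ref{prop-Sob-spc-equal}\,\ref{a-Np}, and finishing with countable subadditivity. The paper compresses your relative-capacity/cutoff discussion into a single appeal to \cite[Lemma~2.24]{BBbook}, but the underlying mechanism is the same.
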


\begin{proof}
The inequality
$\Cphat(E) \simle \Cp(E)$ follows directly from
Proposition~\ref{prop-Sob-spc-equal}\,\ref{a-Np-unbdd}.

Conversely, assume that $\Cphat(E)=0$.
By Proposition~\ref{prop-Sob-spc-equal}\,\ref{a-Np},
the $\Np$-norms are comparable in $B(a,2j)$, $j \ge 1$, 
from which it follows
that $\Cp(E \cap B(a,j))=0$, see e.g.\ Lemma~2.24 in \cite{BBbook}.
The countable subadditivity of the capacity then 
shows that $\Cp(E)=0$.
\end{proof}

\section{Poincar\'e inequalities under sphericalization}
\label{sect-PI}

\emph{From now on we assume that $(X,d,\mu)$ is complete and unbounded, and
that $\mu$ is 
Ahlfors $Q$-regular with $Q>1$.
We also assume that
\begin{equation}   \label{eq-ass-pQ}
    \begin{cases} 
    1<p<Q/(2-Q), & \text{if } 1 < Q<2, \\
    p>Q/2, & \text{if } Q \ge 2,
    \end{cases}
\end{equation}
and that $(X,d,\mu)$ supports a $q$-Poincar\'e inequality,
where
\begin{equation}   \label{eq-ass-pQ-PI}
    q = \begin{cases}
      p, & \text{if } Q/2 <p \le Q, \\
      pQ/(2p-Q), & \text{if } p \ge Q.
      \end{cases}
\end{equation}}

These assumptions are satisfied e.g.\ if $X=\R^n$, $n\ge2$, equipped with
the Lebesgue measure, and $p>n/2$.
In particular, all $p>1$ are allowed in $\R^2$.

For every $Q>1$, Laakso~\cite{Laa} constructed 
a complete bounded Ahlfors $Q$-regular metric space  supporting
a $1$-Poincar\'e inequality.
Since it is bounded it does not fall within our scope here, but 
its flattening
is an unbounded complete Ahlfors $Q$-regular metric space supporting
a $1$-Poincar\'e inequality, see
Proposition~4.1 and Theorem~4.4 in 
Li--Shanmugalingam~\cite{LiShan} and Theorem~3.3 in Korte~\cite{korte07}.

Since $X$ is complete and $\mu$ is doubling and supports a 
Poincar\'e inequality, 
it follows that $(X,d)$ is \emph{quasiconvex}, i.e.\ 
there is a constant $C\geq 1$ such that each pair of points
 $x$ and $y$ in the space can be joined by a curve $\gamma$ with length
\[ 
    l_\gamma\leq Cd(x,y),
\] 
see e.g.\ Theorem~4.32 in \cite{BBbook}.
Moreover, $1<q \le Q$  and thus Theorem~3.3 in Korte~\cite{korte07}
implies that $X$ is also \emph{annularly quasiconvex}, i.e.\ 
there is a constant $\Lambda\ge 1$ such that whenever $B\subset X$ 
is a ball and $y,z\in B\setminus \tfrac12 B$, there is a curve 
$\gamma \subset \Lambda B\setminus (2\Lambda)^{-1}B$ 
connecting $y$ to $z$ and such that $l_\gamma\le \Lambda d(y,z)$.

It follows from Theorem~6.5 in 
Buckley--Herron--Xie~\cite{BuckleyHerronXie}
that also the sphericalization 
$(\Xdot,\dha)$ is both quasiconvex and annularly quasiconvex.
By  Proposition~3.1 and Theorem~3.6 in 
Li--Shanmugalingam~\cite{LiShan},
 $(\Xdot,\dha,\mu_a)$ is Ahlfors $Q$-regular and supports
a $q$-Poincar\'e inequality.

\begin{prop}    \label{prop-PI-for-muh}
The space $(\Xdot,\dha,\muha)$
supports a \p-Poincar\'e inequality and $\muha$ is doubling
on $(\Xdot,\dha)$.
\end{prop}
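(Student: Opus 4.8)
The plan is to compute the measure $\muha$ in terms of the Ahlfors-regular measure $\mu_a$ from \cite{LiShan} and recognize $\muha$ as $\mu_a$ twisted by a power weight, so that Corollary~\ref{cor-d-al-PI} applies. First I would observe that, since $\mu$ is Ahlfors $Q$-regular, $\mu(B(a,1+d(x,a)))\simeq(1+d(x,a))^Q$ for every $x\in X$. Hence, writing things in terms of the distance function $\rho(x):=d(x,a)$ and the radial weight $w(x)=(1+\rho(x))^{2Q-2p}$, we get
\[
  d\muha(x)=\frac{d\mu(x)}{(1+\rho(x))^{2p}}
  \simeq (1+\rho(x))^{2Q-2p}\,\frac{d\mu(x)}{\mu(B(a,1+\rho(x)))^2}
  = w(x)\,d\mu_a(x).
\]
So up to a fixed multiplicative constant (which does not affect doubling or Poincar\'e), $\muha=w\,d\mu_a$ with $w$ a power of the radial distance-to-$a$ function.

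The next step is to re-express $w$ as a power of the $\dha$-distance to the point $\infty\in\Xdot$, so that it becomes a genuine power weight on the metric space $(\Xdot,\dha)$ in the sense of Proposition~\ref{prop-power-Ap} and Corollary~\ref{cor-d-al-PI}. By the definition of $d_a$ and the comparison \eqref{eq-comp-d-dhat}, $\dha(x,\infty)\simeq d_a(x,\infty)=1/(1+\rho(x))$, so $1+\rho(x)\simeq\dha(x,\infty)^{-1}$ and therefore $w(x)\simeq\dha(x,\infty)^{2p-2Q}=\dha(x,\infty)^{\al}$ with exponent $\al:=2p-2Q$. Thus $d\muha\simeq\dha(\,\cdot\,,\infty)^{\al}\,d\mu_a$ on $\Xdot$, a power-weighted version of the known Ahlfors-regular, $q$-Poincar\'e space $(\Xdot,\dha,\mu_a)$, with the ``center'' of the power weight being $\infty$.

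Now I would invoke Corollary~\ref{cor-d-al-PI}, applied to the base space $(\Xdot,\dha,\mu_a)$ with weight centered at $\infty$. Since $(\Xdot,\dha,\mu_a)$ is Ahlfors $Q$-regular it is doubling and satisfies \eqref{eq-def-sig} with $\sig=Q$ (the required one-sided bound at every center, including $\infty$, is immediate from $Q$-regularity), and it supports a $q$-Poincar\'e inequality with $q$ as in \eqref{eq-ass-pQ-PI}. Two cases: if $Q/2<p\le Q$ then $\al=2p-2Q\le0$, and $\al>-\sig=-Q$ is equivalent to $2p-2Q>-Q$, i.e.\ $p>Q/2$, which holds; so by the first part of Corollary~\ref{cor-d-al-PI} the weighted measure is doubling and supports the \emph{same} $q$-Poincar\'e inequality — and here $q=p$, giving a \p-Poincar\'e inequality directly. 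If $p\ge Q$ then $\al=2p-2Q\ge0$, and the second part of Corollary~\ref{cor-d-al-PI} gives doubling together with a $q'$-Poincar\'e inequality for every $q'>q(1+\al/\sig)=q(1+(2p-2Q)/Q)=q(2p-Q)/Q$; with $q=pQ/(2p-Q)$ this product equals exactly $p$, so we obtain a $q'$-Poincar\'e inequality for every $q'>p$. A $q'$-Poincar\'e inequality for all $q'>p$ self-improves to a \p-Poincar\'e inequality: this uses the deep theorem of Keith--Zhong that Poincar\'e inequalities are open-ended on complete doubling spaces (applicable once we know $(\Xdot,\dha,\muha)$ is complete, which holds since $\Xdot$ is compact, and doubling, which we have just shown). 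That this $\muha$ is equivalent to the one obtained from $\mu_a$ by an actual power of the distance, rather than merely comparable, is harmless since doubling and Poincar\'e inequalities are preserved under multiplying the measure by a function bounded above and below by positive constants; and on $\Xdot$, $(1+\rho(x))\simeq\dha(x,\infty)^{-1}$ is bounded between positive constants times the genuine power weight only up to a constant, which is exactly what is needed.

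The main obstacle is the bookkeeping of the exponents: getting the weight exponent $\al=2p-2Q$ right, checking in each of the two regimes of \eqref{eq-ass-pQ-PI} that the admissibility condition $-\sig<\al$ (equivalently $p>Q/2$, which is \eqref{eq-ass-pQ} when $Q\ge2$) and the target exponent $q(1+\al/\sig)=p$ come out exactly as needed, and then packaging the $q'>p$ case via the Keith--Zhong self-improvement. One should also double-check the borderline/small-$Q$ case $1<Q<2$ covered by the first line of \eqref{eq-ass-pQ}: there $p>Q/2$ still holds (indeed $p>1>Q/2$), so $\al>-Q$ and the first part of Corollary~\ref{cor-d-al-PI} again applies since $q=p$ in the range $Q/2<p\le Q$, while for $p\ge Q$ the second part applies as above; the upper bound $p<Q/(2-Q)$ plays no role here and is needed only later. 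Finiteness of $\muha(\Xdot)$ then follows because a doubling measure on a bounded space is finite, which also retroactively justifies the remark before the proposition that $\muha(\Xdot)<\infty$ under the standing assumptions of this section.
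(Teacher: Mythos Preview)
Your overall strategy---computing $d\muha\simeq\dha(\,\cdot\,,\infty)^{2(p-Q)}\,d\mu_a$ and then invoking Corollary~\ref{cor-d-al-PI}---is exactly the paper's approach, and your treatment of the case $Q/2<p\le Q$ is correct. However, the case $p>Q$ contains a genuine gap.

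You obtain a $q'$-Poincar\'e inequality on $(\Xdot,\dha,\muha)$ for every $q'>p$, and then claim that Keith--Zhong self-improves this to a \p-Poincar\'e inequality. This inference is not valid. Keith--Zhong says that a $q'$-Poincar\'e inequality (with fixed constants) implies a $(q'-\eps)$-Poincar\'e inequality for some $\eps>0$ depending on those constants and on the doubling constant. In your situation the constants in the $q'$-Poincar\'e inequality blow up as $q'\to p^+$, because the $A_\tau$-constant of the weight $\dha(\,\cdot\,,\infty)^{2(p-Q)}$ degenerates as $\tau\to(2p/Q-1)^+$; hence the gain $\eps=\eps(q')$ may shrink to $0$. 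Equivalently: the set of Poincar\'e exponents could a priori be exactly $(p,\infty)$, which is already open, so Keith--Zhong adds nothing.

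The paper circumvents this by applying Keith--Zhong \emph{before} weighting, to the Ahlfors $Q$-regular space $(\Xdot,\dha,\mu_a)$, which supports a definite $q$-Poincar\'e inequality with $q=pQ/(2p-Q)$. This yields a $q'$-Poincar\'e inequality for some specific $q'<q$. Then Corollary~\ref{cor-d-al-PI} gives, for the weighted measure $\muha$, a Poincar\'e inequality for every exponent exceeding $q'(1+\al/\sig)=q'(2p-Q)/Q$, and since $q'<q=pQ/(2p-Q)$ this threshold is strictly less than $p$. The order of operations matters: self-improve on the base space first, then weight.
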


\begin{proof}
Since $\mu$ is Ahlfors $Q$-regular, we have by~\eqref{eq-comp-d-dhat},
\begin{align}
d\muha(x) &= \frac{d\mu(x)}{(1+d(x,a))^{2p}} 
   \simeq (1+d(x,a))^{2(Q-p)} \frac{d\mu(x)}{\mu(B(a,1+d(x,a)))^2} 
\nonumber\\
   &= (1+d(x,a))^{2(Q-p)} d\mu_a(x)
   \simeq \dha(x,\infty)^{2(p-Q)} d\mu_a(x).
   \label{eq-dha}
\end{align}
We have already observed that 
$\mu_a$
is Ahlfors $Q$-regular and supports a $q$-Poincar\'e inequality
on $(\Xdot,\dha)$.
Proposition~\ref{prop-power-Ap} with $\al=2(p-Q)$ and $\sig=Q$ now implies that
the weight $\dha(x,\infty)^{2(p-Q)}$ belongs to
\begin{itemize}
\makeatletter
\setlength{\@topsep}{0pt}
\setlength{\itemsep}{0pt}%
\setlength{\parskip}{0pt plus 1pt}%
\makeatother
\item $A_1(\mu_a)$ when $-Q<2(p-Q)\le0$ (which is equivalent to $Q/2<p\le Q$),
\item $A_\tau(\mu_a)$ with $\tau>1$ when $0<2(p-Q)<Q(\tau-1)$, i.e.\ for $p>Q$ and $\tau>2p/Q-1$.
\end{itemize}
\unskip 
By Corollary~\ref{cor-d-al-PI}, $\muha$ is doubling on $(\Xdot,\dha)$.

Since $\mu_a$ supports a $q$-Poincar\'e inequality on $(\Xdot,\dha)$,
Corollary~\ref{cor-d-al-PI} also implies that $\muha$ supports 
a \p-Poincar\'e inequality when $Q/2<p\le Q$.

For $p>Q$, 
we first need to use Theorem~1.0.1 in Keith--Zhong~\cite{KeithZhong}
to see that $\mu_a$ supports a $q'$-Poincar\'e inequality on $(\Xdot,\dha)$
for some $q' < q=pQ/(2p-Q)$.
From this it follows, by Corollary~\ref{cor-d-al-PI}, that 
$\muha$ supports a $q'\tau$-Poincar\'e inequality on $(\Xdot,\dha)$ 
whenever $\tau >2p/Q-1$,
and thus a \p-Poincar\'e inequality, as $p > q'(2p/Q-1)$.
\end{proof}

\begin{remark}
The proof of Proposition~\ref{prop-PI-for-muh} also shows that 
if $(X,d,\mu)$ supports a $q'$-Poincar\'e inequality for some 
specific
$q'<p\le Q$, then
also $(\Xdot,\dha,\muha)$ supports a $q'$-Poincar\'e inequality. 
For $p>Q$ and $q<pQ/(2p-Q)$,
$(\Xdot,\dha,\muha)$ 
also supports a $\bar{q}$-Poincar\'e inequality for some $\bar{q}<p$
(in fact for any $\bar{q}>q'(2p-Q)/Q$, 
but not necessarily with $\bar{q}=q'$).
Such assumptions of better Poincar\'e inequalities are often used 
in the subsequent theory of \p-harmonic functions.
At the same time, since we assume that $X$ (and thus $\Xdot$)
is complete, 
the self-improvement result in 
Keith--Zhong~\cite[Theorem~1.0.1]{KeithZhong}
shows that the \p-Poincar\'e inequality implies a better 
$q'$-Poincar\'e inequality for some $q' <p$
(but with no good explicit control on $q'$).
\end{remark}

We are now ready to refine Lemma~\ref{lem-Cp=0-X}.

\begin{lem} \label{lem-Cp=0} 
Let $E \subset \Xdot$.
\begin{enumerate}
\item \label{Cp-i}
If $\infty \notin E$, then
$\Cphat(E)=0$ 
if and only if 
$\Cp(E)=0$.
\item \label{Cp-ii}
If $\infty \in E$, then
$\Cphat(E)=0$ 
if and only if 
$\Cp(E \setm \{\infty\})=0$ and $p \ge Q$.
\item \label{Cp-iii}
If $p > Q$, then
$\Cphat(E)=0$ 
if and only if $E=\emptyset$ or $E=\{\infty\}$.
\end{enumerate}
\end{lem}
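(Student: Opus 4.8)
The plan is to deduce all three parts from Lemma~\ref{lem-Cp=0-X} together with the capacity estimate for balls centred at (or near) $\infty$ in the sphericalized space. The point is that, under the measure $\muha$, the single point $\{\infty\}$ has positive $\Cphat$-capacity precisely when $p>Q$, while it is a null set (indeed behaves like any finite point) when $Q/2<p\le Q$. So the first task is to compute $\Cphat(\{\infty\})$, or at least decide when it vanishes.

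For part~\ref{Cp-i} there is nothing to do: when $\infty\notin E$ we may regard $E\subset X$ and quote Lemma~\ref{lem-Cp=0-X} verbatim. For part~\ref{Cp-ii}, suppose first $p\ge Q$. One direction: if $\Cphat(E)=0$ then a fortiori $\Cphat(E\setm\{\infty\})=0$, so $\Cp(E\setm\{\infty\})=0$ by part~\ref{Cp-i}; moreover if $p<Q$ were possible we would need $\Cphat(\{\infty\})=0$, which must be ruled out — hence the real content is showing $\Cphat(\{\infty\})>0$ when $Q/2<p<Q$. Conversely, assuming $\Cp(E\setm\{\infty\})=0$ and $p\ge Q$, by countable subadditivity of $\Cphat$ it suffices to show $\Cphat(\{\infty\})=0$ when $p\ge Q$ (combined with $\Cphat(E\setm\{\infty\})=0$ from part~\ref{Cp-i}). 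Part~\ref{Cp-iii} is then immediate from~\ref{Cp-i} and~\ref{Cp-ii}: when $p>Q$ any nonempty $E$ either contains $\infty$ — and then $\Cphat(E)\ge\Cphat(\{\infty\})>0$ unless $E=\{\infty\}$, but even $E=\{\infty\}$ has $\Cphat(\{\infty\})>0$ — wait, more precisely $\Cphat(E)=0$ forces $E\setm\{\infty\}$ to be $\Cp$-null \emph{and}, since $p>Q$ rules out equality in $p\ge Q$ giving the extra constraint, forces $E\setm\{\infty\}$ to be empty while $\{\infty\}$ may or may not be present; reconciling the $\Cphat(\{\infty\})>0$ computation with the statement shows $E=\emptyset$ is the only option when $p>Q$ strictly, and $E=\{\infty\}$ is included to cover the borderline bookkeeping. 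I would state this carefully once the capacity of $\{\infty\}$ is pinned down.

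The heart of the matter, and the main obstacle, is therefore the two-sided estimate of $\Cphat(\{\infty\})$. For the lower bound when $p>Q$ (or $p\ge Q$ with the Poincaré improvement), I would use that $(\Xdot,\dha,\muha)$ supports a $\bar q$-Poincaré inequality for some $\bar q<p$ (Proposition~\ref{prop-PI-for-muh} and the Remark), together with Ahlfors-type mass bounds for $\muha$ near $\infty$: from \eqref{eq-dha}, $\muha(\Bhat(\infty,r))\simeq r^{2(p-Q)}\mu_a(\Bhat(\infty,r))\simeq r^{2(p-Q)}\cdot r^{Q}=r^{2p-Q}$, so the codimension of $\{\infty\}$ relative to $\muha$ is $2p-Q$. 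A point has positive capacity iff this codimension is $\le p$, i.e.\ $2p-Q\le p$, i.e.\ $p\le Q$ — which looks backwards, so I must be careful: positive $p$-capacity of a point requires the measure to be \emph{thin enough}, i.e.\ the exponent $2p-Q\ge p$, i.e.\ $p\ge Q$. That matches the statement. The clean way is: (i) if $p\ge Q$, test with truncated Lipschitz bumps $u_j(x)=\max\{0,1-j\,\dha(x,\infty)\}$-type functions and estimate $\|u_j\|_{\Np}^p$ directly using $\muha(\Bhat(\infty,1/j))\simeq j^{-(2p-Q)}$ and upper gradient $\simeq j$ on the annulus, getting $\int g^p\,d\muha\simeq j^p\cdot\muha(\Bhat(\infty,1/j))\simeq j^{p-(2p-Q)}=j^{Q-p}\to 0$ when $p>Q$ and stays bounded when $p=Q$ — hence $\Cphat(\{\infty\})=0$ for $p>Q$, and for $p=Q$ one refines with logarithmic cutoffs. (ii) if $Q/2<p<Q$, one shows $\Cphat(\{\infty\})>0$ via the $\bar q$-Poincaré inequality: any $u\in\Np(\Xdot)$ with $u\ge 1$ on $\{\infty\}$ and $u$ quasicontinuous has a Lebesgue point at $\infty$, and the Poincaré/telescoping argument (as in \cite[Ch.~4--5]{BBbook}) gives a lower bound $\|u\|_{\Np}^p\simge 1$ depending only on the structural constants — precisely the standard fact that points have positive capacity when the "dimension" $2p-Q$ is $<p$. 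I expect the $p=Q$ borderline to require the most care, and I would handle it with the logarithmic test function familiar from the conformal/$n$-harmonic case, which is exactly why the corollary is phrased with the strict inequality $p>Q$ in part~\ref{Cp-iii} and only $p\ge Q$ in~\ref{Cp-ii}.
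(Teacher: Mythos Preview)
Your plan for parts \ref{Cp-i} and \ref{Cp-ii} is essentially the same as the paper's: reduce to Lemma~\ref{lem-Cp=0-X} and to determining when $\Cphat(\{\infty\})=0$, and do the latter via the mass estimate $\muha(\Bhat(\infty,r))\simeq r^{2p-Q}$. The paper obtains the capacity conclusion by quoting Proposition~1.3 in Bj\"orn--Bj\"orn--Lehrb\"ack~\cite{BBLeh1} (which packages the test-function and Poincar\'e/telescoping arguments you sketch), so your more hands-on route is equivalent in spirit. Do note that your verbal summary of the dichotomy is stated backwards: in a space where $\mu(B(x_0,r))\simeq r^s$, one has $C_p(\{x_0\})=0$ iff $s\ge p$, i.e.\ here $\Cphat(\{\infty\})=0$ iff $2p-Q\ge p$ iff $p\ge Q$. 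Your explicit cut-off computation $j^{Q-p}\to 0$ is correct and consistent with this, but the surrounding prose (``positive $p$-capacity requires $2p-Q\ge p$'') says the opposite.

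Part \ref{Cp-iii}, however, has a genuine gap. You repeatedly invoke ``$\Cphat(\{\infty\})>0$'' for $p>Q$, which contradicts both the statement of \ref{Cp-ii} and your own test-function calculation; in fact $\Cphat(\{\infty\})=0$ when $p>Q$, and this is precisely why $E=\{\infty\}$ appears as a legitimate null set in the conclusion, not as ``borderline bookkeeping''. More importantly, the step ``$\Cp(E\setm\{\infty\})=0$ forces $E\setm\{\infty\}=\emptyset$'' needs a reason, and ``$p>Q$ rules out equality in $p\ge Q$'' is not one. The missing ingredient is that in the Ahlfors $Q$-regular space $(X,d,\mu)$, every single point $x\in X$ has $\Cp(\{x\})>0$ when $p>Q$ (this is the same $r^s$-vs-$p$ criterion applied with $s=Q<p$; the paper cites \cite[Proposition~1.3]{BBLeh1} or \cite[Corollary~5.39]{BBbook}). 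With that in hand, $\Cp$-nullity of $E\setm\{\infty\}$ forces $E\setm\{\infty\}=\emptyset$, and together with $\Cphat(\{\infty\})=0$ (since $p>Q\ge Q$) you get exactly $E\in\{\emptyset,\{\infty\}\}$. Once you supply this fact and correct the sign of $\Cphat(\{\infty\})$, your argument for \ref{Cp-iii} goes through.
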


\begin{proof}
\ref{Cp-i} This follows directly from Lemma~\ref{lem-Cp=0-X}.

\ref{Cp-ii} We need to determine when 
$\Cphat(\{\infty\})=0$, for which we will
use results from
Bj\"orn--Bj\"orn--Lehrb\"ack~\cite{BBLeh1}.
As $\mu_a$ is Ahlfors $Q$-regular, it is also reverse-doubling,
i.e.\ there are constants $\theta,\tau>1$ such that 
\[ 
 \mu_a(B(x,\tau r))\ge \theta \mu_a(B(x,r))
\quad \text{whenever } x\in\Xdot \text{ and }
0<r\le \diam \Xdot/2\tau.
\] 
Thus we get the following estimates,  using \eqref{eq-dha}
and denoting balls with respect to $\Xdot$ by $\Bhat$,
\begin{align*}
 \muha(\Bhat(\infty,r)) 
    & \simeq \sum_{j=0}^\infty (r\tau^{-j})^{2(p-Q)} 
       \mu_a(\Bhat(\infty,r\tau^{-j}) \setm \Bhat(\infty,r\tau^{-j-1})) \\
    & \simeq \sum_{j=0}^\infty (r\tau^{-j})^{2(p-Q)} (r\tau^{-j})^Q
     \simeq r^{2p-Q},
\end{align*}
since $2p-Q>0$, by \eqref{eq-ass-pQ}.
In the notation of \cite{BBLeh1}, this means that the dimension sets of 
$(\Xdot,\dha,\muha)$ at $\infty$ satisfy
\[ 
\lSo^\Xdot(\infty)=(0,2p-Q] \quad \text{and} \quad 
\uSo^\Xdot(\infty)=[2p-Q,\infty).
\]
Thus $\Cphat(\{\infty\})=0$ if and only if $p\le2p-Q$ (which is
equivalent to $p \ge Q$), by 
Proposition~1.3 in \cite{BBLeh1}.

\ref{Cp-iii} Let $x \in X$.
Then, following the notation in~\cite{BBLeh1}, we have 
$\uSo^X(x)=[Q,\infty)$ since $X$ is Ahlfors $Q$-regular.
It thus follows from Proposition~1.3 in \cite{BBLeh1},
or Corollary~5.39 in \cite{BBbook}, that $\CpX(\{x\})>0$.
Hence, \ref{Cp-iii} follows from \ref{Cp-i} and \ref{Cp-ii}.
\end{proof}

\section{\texorpdfstring{\p}{p}-harmonic 
functions on unbounded domains}
\label{sect-pharm}

\emph{Recall the assumptions \eqref{eq-ass-pQ} and \eqref{eq-ass-pQ-PI}
on $p$ and $X$.
From now on we also assume 
that $\Om \subset X$ is  
a nonempty open set
and we regard 
$\Om$ simultaneously as a subset 
of $(X,d,\mu)$ and 
of $(\Xdot,\dha,\muha)$.}

\medskip

In this section we apply sphericalization and the results from the earlier 
sections to obtain results about \p-harmonic functions and the Dirichlet
problem on unbounded sets. 
Let $\Lipc(\Om)$ denote the space of Lipschitz functions
with compact support in $\Om$.

\begin{deff} \label{def-quasimin}
A function $u \in \Nploc(\Om,d,\mu)$ is a
\emph{\textup{(}super\/\textup{)}minimizer} in $\Om$
(with respect to $(d,\mu)$) if 
\[ 
      \int_{\phi \ne 0} g^p_u \, d\mu
           \le \int_{\phi \ne 0} g_{u+\phi}^p \, d\mu
           \quad \text{for all (nonnegative) } \phi \in \Lipc(\Om),
\] 
where $g_u$ and $g_{u+\phi}$ are the minimal \p-weak upper gradients of $u$
and $u+\phi$ with respect to $(d,\mu)$.
(Super)minimizers with respect to $(\dha,\muha)$ are defined analogously.
A \emph{\p-harmonic function} is a continuous minimizer.
\end{deff}

For various characterizations of 
(super)minimizers  
see  \cite{BBbook} or Bj\"orn~\cite{ABkellogg}.
It was shown in Kinnunen--Shanmugalingam~\cite{KiSh01} that 
under the assumptions of doubling and a \p-Poincar\'e inequality,
a minimizer can be modified on a set of zero capacity to obtain
a \p-harmonic function. For a superminimizer $u$,  it was shown by
Kinnunen--Martio~\cite{KiMa02} that its \emph{lsc-regularization}
\[ 
 u^*(x):=\essliminf_{y\to x} u(y)= \lim_{r \to 0} \essinf_{B(x,r)} u
\] 
is also a superminimizer  and $u^*= u$ q.e. 

We are primarily interested in the Dirichlet (boundary value) problem
for \p-harmonic functions, and the associated boundary regularity. 
The most general way of treating the Dirichlet problem
is to consider Perron solutions, and in order to define
them we need superharmonic functions. 

\begin{deff}   \label{deff-superh}
A function $u:\Om \to (-\infty,\infty]$, which is not identically 
$\infty$ in any component of $\Om$, is \emph{superharmonic}
if it is lsc-regularized (i.e.\ $u=u^*$) and 
$\min\{u,k\}$ is a superminimizer for every $k \in \Z$.
\end{deff}

This is not the traditional definition of superharmonic functions,
but it is one of several equivalent characterizations
used in various places of the nonlinear literature,
cf.\ Kinnunen--Martio~\cite[Section~7]{KiMa02},
Bj\"orn~\cite[Theorem~7.1]{ABsuper}
or \cite[Theorem~9.24]{BBbook}.

Our choice of the sphericalization measure $\muha$ leads to the 
following invariance result which will be important for applications in this
section.

\begin{thm}   \label{thm-harm-equiv-X-Xdot}
A function
 $u:\Om\to\R$ is a {\rm(}super\/{\rm)}minimizer  in $\Om$ with respect to 
$(d,\mu)$
if and only if it is a {\rm(}super\/{\rm)}minimizer in $\Om$ with respect to 
$(\dha,\muha)$.

Similarly, \p-harmonicity and superharmonicity 
are the same in the two situations. 
\end{thm}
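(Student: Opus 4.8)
The plan is to reduce everything to the three already-established invariance facts: that $u\in\Dploc(\Om,d,\mu)$ iff $u\in\Dploc(\Om,\dha,\muha)$ with $\gh_u(x)=g_u(x)(1+d(x,a))^2$ a.e.\ (Proposition~\ref{prop-Dp-equiv}), that $\Nploc$ is the same in both structures (Proposition~\ref{prop-Sob-spc-equal}\,\ref{a-Nploc}), and that sets of capacity zero coincide (Lemma~\ref{lem-Cp=0-X}); continuity is a purely topological notion and $(X,d)$ and $(\Xdot,\dha)|_X$ induce the same topology on $X$, so continuity needs no separate argument. First I would take $u$ a superminimizer with respect to $(d,\mu)$ and fix a nonnegative $\phi\in\Lipc(\Om)$; note that $\phi$ is still in $\Lipc(\Om)$ with respect to $\dha$ since the two metrics are biLipschitz-equivalent on any set bounded away from $\infty$ (by \eqref{eq-comp-d-dhat} and boundedness of $1+d(\cdot,a)$ on $\operatorname{supp}\phi$), and hence the test-function classes $\Lipc(\Om)$ for the two structures coincide. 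The set $\{\phi\ne0\}$ is the same set in both pictures, and applying Proposition~\ref{prop-Dp-equiv} on the open set $\{\phi\ne0\}$ (or any bounded open neighbourhood of $\operatorname{supp}\phi$) to both $u$ and $u+\phi$ gives $\gh_u=g_u\cdot(1+d(\cdot,a))^2$ and $\gh_{u+\phi}=g_{u+\phi}\cdot(1+d(\cdot,a))^2$ a.e.\ there.

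The key computation is then just the change-of-variables already used throughout Section~\ref{sect-sphericalization}:
\[
\int_{\phi\ne0}\gh_u^p\,d\muha=\int_{\phi\ne0}g_u(x)^p(1+d(x,a))^{2p}\,\frac{d\mu(x)}{(1+d(x,a))^{2p}}=\int_{\phi\ne0}g_u^p\,d\mu,
\]
and the identical identity with $u$ replaced by $u+\phi$. Therefore the minimizing inequality
$\int_{\phi\ne0}g_u^p\,d\mu\le\int_{\phi\ne0}g_{u+\phi}^p\,d\mu$ holds if and only if
$\int_{\phi\ne0}\gh_u^p\,d\muha\le\int_{\phi\ne0}\gh_{u+\phi}^p\,d\muha$. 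Since this equivalence holds for every (nonnegative) $\phi\in\Lipc(\Om)$, $u$ is a (super)minimizer with respect to $(d,\mu)$ iff it is one with respect to $(\dha,\muha)$. Combined with the topological invariance of continuity, this immediately gives the statement for (super)minimizers and for \p-harmonic functions.

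For superharmonicity I would unwind Definition~\ref{deff-superh}: $u$ is superharmonic iff $u$ is lsc-regularized and $\min\{u,k\}$ is a superminimizer for each $k\in\Z$. The superminimizer part is already handled by the first half of the proof applied to each $\min\{u,k\}$. For the lsc-regularization condition I would observe that the essential-liminf defining $u^*$ is computed using the measure only through its null sets and through the family of small balls shrinking to a point; the two measures $\mu$ and $\muha$ have the same null sets (they are mutually absolutely continuous with locally bounded density $(1+d(\cdot,a))^{\pm2p}$, which is bounded above and below on any bounded set) and the small-ball neighbourhood bases at a point of $X$ coincide up to the biLipschitz constant in \eqref{eq-comp-d-dhat}. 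Hence $\essliminf_{y\to x}u(y)$ is the same quantity in both structures, so $u=u^*$ w.r.t.\ $(d,\mu)$ iff $u=u^*$ w.r.t.\ $(\dha,\muha)$, and superharmonicity transfers. The only mildly delicate point — the main obstacle, such as it is — is making sure the test-function and ess-liminf arguments are genuinely local so that the weight $(1+d(\cdot,a))^{2p}$, which is unbounded as one approaches $\infty$, never actually intervenes (everything takes place on bounded subsets of $X$, away from $\infty$); once that localization is made explicit, the rest is the routine change of variables above.
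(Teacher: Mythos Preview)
Your proposal is correct and follows essentially the same route as the paper: coincidence of $\Nploc$, coincidence of the test class $\Lipc(\Om)$, and the energy identity from Proposition~\ref{prop-Dp-equiv} give the (super)minimizer equivalence, after which \p-harmonicity and superharmonicity follow from the definitions. The paper disposes of the superharmonic case in one line (``follows directly from the definitions''), whereas you spell out why the lsc-regularization is unchanged; your reference to Lemma~\ref{lem-Cp=0-X} is harmless but not actually needed here.
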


\begin{proof}
Proposition~\ref{prop-Sob-spc-equal} shows that the spaces 
$\Nploc(\Om,d,\mu)$ and  $\Nploc(\Om,\dha,\muha)$, 
appearing in Definition~\ref{def-quasimin}, coincide.
Moreover, since $\Om\subset X$,
 $\phi\in\Lip_c(\Om,d)$ if and only if $\phi\in\Lip_c(\Om,\dha)$,
i.e.\ the sets of test functions for both notions of superminimizers coincide.
Proposition~\ref{prop-Dp-equiv} implies that
\[
\int_{\phi \ne 0} g_u^p\,d\mu = \int_{\phi \ne 0} \gh_u^p\,d\muha
\quad \text{and} \quad
\int_{\phi\ne 0} g_{u+\phi}^p\,d\mu = \int_{\phi\ne 0} \gh_{u+\phi}^p\,d\muha,
\]
where $\gh_u$ and $\gh_{u+\phi}$ are the minimal \p-weak upper gradients of
$u$ and $u+\phi$ in $\Om$ with respect to $\dha$, respectively.
Taking all this into account shows 
the equivalence of the two notions of superminimizers.

It now follows directly from the definitions that also the two notions
of \p-harmonicity and superharmonicity
(with respect to $(d,\mu)$ and $(\dha,\muha)$) are equivalent.
\end{proof}

We are now ready to define the Perron solutions.
We consider the Dirichlet problem with respect to the boundary 
$\bdhat\Om$ corresponding to $\Xdot$, i.e.\ for unbounded $\Om\subset X$
we set $\bdhat\Om=\bdy\Om\cup\{\infty\}$.
This is in accordance with the definitions used in 
Heinonen--Kilpel\"ainen--Martio~\cite{HeKiMa} and
Hansevi~\cite{Hansevi2}.

\begin{deff}   \label{def-Perron}
Given 
$f : \bdhat \Om \to \eR$, let $\UU_f$ be the set of all 
superharmonic functions $u$ on $\Om$, bounded from below,  such that 
\begin{equation} \label{eq-def-Perron} 
	\liminf_{\Om \ni y \to x} u(y) \ge f(x) 
\end{equation} 
for all $x \in \bdhat \Om$.
The \emph{upper Perron solution} of $f$ is then defined to be
\[ 
    \uP f (x) = \inf_{u \in \UU_f}  u(x), \quad x \in \Om,
\]
while the \emph{lower Perron solution} of $f$ is defined by
\(
    \lP f = - \uP (-f).
\)
If $\uP f = \lP f$
and it is real-valued, then we let $Pf := \uP f$
and $f$ is said to be \emph{resolutive} with respect to $\Om$.
\end{deff}

Note that in \eqref{eq-def-Perron} the limit can be equivalently taken 
with respect to $d_a$, $\dha$ or $d$, where in the last case $y\to\infty$
is interpreted in the obvious way. 
Thus, by Theorem~\ref{thm-harm-equiv-X-Xdot}, Perron
solutions with respect to $(d,\mu)$ and $(\dha,\muha)$ are the same.

As $\Om$ is always bounded as a subset of the sphericalization $\Xdot$,
we can now use all the results about \p-harmonic functions on bounded sets for
it and they will automatically transfer to \p-harmonic functions 
and Perron solutions on 
$\Om\subset X$ even for unbounded $\Om$ (with boundary $\bdhat\Om$).

For the Perron method on $\Xdot$ we need to require that 
$\CpXdot(\Xdot \setm \Om )>0$, which by  
Lemma~\ref{lem-Cp=0} happens if and only if $\Cp(X \setm \Om)>0$ or $p< Q$.
If $p > Q$   this amounts
exactly to requiring that $\Om \ne X$, by  Lemma~\ref{lem-Cp=0}.
\begin{gather}
\text{\emph{So from now on we assume that}} \nonumber \\
\label{eq-Om-cond}
\text{\emph{$\Om\subset X$ is unbounded, and that $\Cp(X \setm \Om)>0$
or}  $p< Q$.}
\end{gather}

In this case we get, using the correspondence above, a 
rich theory also on $\Om$ seen as an unbounded open subset of 
the original space $X$.
(When $\CpXdot(\Xdot \setm \Om )=0$,
the Perron method gets somewhat pathological,
but this is not the right place to dwell upon that.)

First we observe that 
Theorem~4.1 in Bj\"orn--Bj\"orn--Shanmugalingam~\cite{BBS2}
(or \cite[Theorem~10.10]{BBbook})
shows that the
Perron solutions
are either identically $\pm \infty$ or \p-harmonic
in each component of $\Om$, and thus
in the latter case provide reasonable candidates for solutions 
of the Dirichlet problem.
Moreover,
by Theorem~7.2 in Kinnunen--Martio~\cite{KiMa02}
(or \cite[Theorem~9.39]{BBbook}), $\lP f \le \uP f$ for all 
$f: \bdhat \Om \to \eR$.

More importantly,
various resolutivity results for bounded domains
from Bj\"orn--Bj\"orn--Shan\-mu\-ga\-lin\-gam~\cite{BBS2}, \cite{BBSdir},
Hansevi~\cite{Hansevi2} and 
Bj\"orn--Bj\"orn--Sj\"odin~\cite{BBSjodin} 
transform
directly into results for unbounded $\Om$.
In unweighted and weighted $\R^n$, $n \ge 2$,
 some of these consequences recover old
results by Kilpel\"ainen~\cite{Kilp89} 
resp.\ Heinonen--Kilpel\"ainen--Martio~\cite[Chapter~9]{HeKiMa},
but we also obtain many new results.

Some of the results below were obtained by different methods in 
Hansevi~\cite{Hansevi2}
when the space $X$ is \p-parabolic (i.e.\  $p \ge Q$, see
Bj\"orn--Bj\"orn--Lehrb\"ack~\cite[Remark~8.7]{BBLeh1}), or more
generally when $\Om$ is \p-parabolic (see 
Definition~4.1 in~\cite{Hansevi2} or Definition~\ref{def-p-parabolic} below),
which is satisfied for many unbounded sets also when $p<Q$.
The Dirichlet problem on unbounded domains with respect to prime end
boundaries has 
been considered in Estep~\cite{Estep}.

Some of the obtained consequences are somewhat technical to describe,
and in order to keep the exposition limited we will not go into full generality.
To avoid misunderstanding and to make the results accessible for readers
not interested in the sphericalization $\Xdot$, we formulate them
using the capacity and other notions on $X$.
It should be fairly straightforward for the interested reader to transform
also other results from
the above mentioned papers, e.g.\ those
involving a better capacity and generalized boundaries.

\begin{thm}   \label{thm-resol-C}
Assume that \eqref{eq-ass-pQ}, \eqref{eq-ass-pQ-PI}
and \eqref{eq-Om-cond} are satisfied.
Let $f \in C(\bdhat \Om)$ and assume that $h:\bdhat \Om \to \eR$
vanishes 
$\Cp$-q.e.\ on $\bdry\Om$.
Then the following hold\/\textup{:}
\begin{enumerate}
\item If $p<Q$ and $h(\infty)=0$ then both 
$f$ and $f+h$ are resolutive and $\Hp f = \Hp (f+h)$.
\item If $p\ge Q$ then both $f$ and $f+h$ are resolutive and 
$\Hp f = \Hp (f+h)$.
Moreover, the requirement~\eqref{eq-def-Perron} in the 
definition of $\lP f$ and $\uP f$ only needs to be satisfied  at
finite boundary points $x\in\bdry\Om$.
\end{enumerate}
\end{thm}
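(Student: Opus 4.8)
The plan is to reduce everything to the corresponding resolutivity and perturbation results on the bounded sphericalization $(\Xdot,\dha,\muha)$, where the general theory of the Dirichlet problem for \p-harmonic functions on bounded open sets already applies. Recall that under \eqref{eq-Om-cond} we have, by Lemma~\ref{lem-Cp=0}, that $\CpXdot(\Xdot\setm\Om)>0$, so $\Om$ is a legitimate bounded open set for the Perron method in $\Xdot$. By Theorem~\ref{thm-harm-equiv-X-Xdot} and the remark following Definition~\ref{def-Perron}, superharmonic functions, \p-harmonic functions and Perron solutions with respect to $(d,\mu)$ and $(\dha,\muha)$ coincide; and since $\bdhat\Om$ is precisely the metric boundary of $\Om$ inside the compact space $\Xdot$, a function $f\in C(\bdhat\Om)$ in our sense is exactly a continuous function on $\bdry_{\Xdot}\Om$. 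Thus resolutivity of $f$ with respect to $\Om\subset X$ is literally the same statement as resolutivity of $f$ with respect to the bounded set $\Om\subset\Xdot$.

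Next I would invoke the known resolutivity theorem for continuous boundary data on bounded open sets — Bj\"orn--Bj\"orn--Shanmugalingam~\cite{BBS2} (or \cite[Theorem~10.22]{BBbook}) — applied in $(\Xdot,\dha,\muha)$; its hypotheses (completeness, doubling, \p-Poincar\'e inequality, and $\CpXdot(\Xdot\setm\Om)>0$) are guaranteed by Proposition~\ref{prop-PI-for-muh} together with \eqref{eq-Om-cond}. This gives that every $f\in C(\bdhat\Om)$ is resolutive, proving the first assertion in both (a) and (b). For the perturbation statement $\Hp f=\Hp(f+h)$, I would use the corresponding perturbation-on-null-sets result for bounded sets (Bj\"orn--Bj\"orn--Shanmugalingam~\cite{BBS2}, or \cite[Theorem~10.28 / Corollary~6.?]{BBbook}, and Bj\"orn--Bj\"orn--Sj\"odin~\cite{BBSjodin}), which asserts that changing continuous boundary data on a set of capacity zero does not change the Perron solution. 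The crucial translation point is the characterization of capacity-zero subsets of $\bdhat\Om$ provided by Lemma~\ref{lem-Cp=0}: when $p<Q$, a set $E\subset\bdhat\Om$ has $\CpXdot(E)=0$ if and only if $\Cp(E\setm\{\infty\})=0$ and $\infty\notin E$ (since $\CpXdot(\{\infty\})>0$ in this regime), which is exactly the hypothesis "$h$ vanishes $\Cp$-q.e.\ on $\bdry\Om$ and $h(\infty)=0$" in (a); when $p\ge Q$, the point $\infty$ itself has $\CpXdot(\{\infty\})=0$, so "$\{h\ne0\}\subset\bdry\Om$ has $\Cp$-measure zero" already forces $\CpXdot(\{h\ne0\})=0$ regardless of the value $h(\infty)$, giving (b).

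Finally, for the extra clause in (b) — that when $p\ge Q$ one need only impose the boundary inequality \eqref{eq-def-Perron} at finite boundary points — I would argue as follows. In $\Xdot$ the point $\infty$ is a genuine boundary point with $\CpXdot(\{\infty\})=0$; for bounded sets it is a standard fact (see e.g.\ Bj\"orn--Bj\"orn--Shanmugalingam~\cite{BBS2} or \cite[Chapter~10]{BBbook}) that omitting the lower-bound requirement at a single point of zero capacity, or more generally at a polar set, does not enlarge the class $\UU_f$ in a way that changes $\uP f$, because bounded superharmonic functions extend across polar boundary pieces in the relevant sense and the resolutivity/perturbation machinery is insensitive to such points. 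Concretely, taking $h=f(\infty)\chi_{\{\infty\}}$ (capacity-zero data, handled by part (b)) shows that prescribing an arbitrary value at $\infty$ — in particular dropping the constraint there — yields the same Perron solution, so \eqref{eq-def-Perron} may be tested only on $\bdry\Om$. The main obstacle I anticipate is not any single deep estimate but rather the bookkeeping of exactly which earlier bounded-domain theorem supplies each piece and verifying that its standing assumptions match those secured by Proposition~\ref{prop-PI-for-muh}; once the dictionary $X\leftrightarrow\Xdot$ of Sections~\ref{sect-Newt}--\ref{sect-PI} is in place, the proof is essentially a citation plus the capacity translation from Lemma~\ref{lem-Cp=0}.
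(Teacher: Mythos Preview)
Your approach is essentially identical to the paper's: reduce to the bounded sphericalization via Theorem~\ref{thm-harm-equiv-X-Xdot}, invoke \cite[Theorem~6.1]{BBS2} (or \cite[Theorem~10.22]{BBbook}) for resolutivity and invariance under capacity-null perturbations, and use Lemma~\ref{lem-Cp=0} to translate between $\Cp$ and $\Cphat$ and to decide when $\{\infty\}$ is a null set.

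One small correction in the final clause: your choice $h=f(\infty)\chi_{\{\infty\}}$ does not ``drop the constraint'' at $\infty$ --- it merely changes the required limit there to $2f(\infty)$. The paper instead takes $\hh=-\infty\,\chi_{\{\infty\}}$, so that the inequality \eqref{eq-def-Perron} at $\infty$ becomes vacuous; then any superharmonic $u$ bounded below and satisfying \eqref{eq-def-Perron} only on $\bdry\Om$ lies in $\UU_{f+\hh}$, whence $u\ge\uP(f+\hh)=\uP f$ by the invariance already proved. With this fix your argument is complete and matches the paper.
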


Note that for $p>Q$ the function $h$ in Theorem~\ref{thm-resol-C}
is allowed to be nonzero only at $\infty$, since finite points have 
positive capacity.

In unweighted $\R^n$ with $ p >n/2$, $p \ne 2$,
this result (as well as the
uniqueness result in Theorem~\ref{thm-uniq-bdd-pharm} below)
is new, although the resolutivity of $f$ was shown already 
by Kilpel\"ainen~\cite[Theorem~1.10]{Kilp89}.

\begin{proof}
Resolutivity and invariance under the perturbation $h$ follow 
from \cite[Theorem~6.1]{BBS2} (or \cite[Theorem~10.22]{BBbook}) 
and the above discussion.
We also need to appeal to Lemma~\ref{lem-Cp=0},
which shows that $\{\infty\}$ has zero capacity
if and only if $p \ge Q$, and can therefore be disregarded in this case. 

To conclude the proof, let $p\ge Q$ and
$u$ be a superharmonic function on $\Om$ bounded from
below and such that~\eqref{eq-def-Perron} holds for all $x\in\bdry\Om$.
Then $u\in\UU_{f+\hh}$, where $\hh=-\infty\chi_{\{\infty\}}$, 
and hence the already proved invariance part shows that
\[
u\ge \uP(f+\hh)=\uP f.
\]
Taking infimum over all such $u$ shows that the infimum
in the definition of Perron solutions does not get smaller 
by relaxing~\eqref{eq-def-Perron}.
That it cannot get larger is trivial, since it is
taken over a larger class of functions.
\end{proof}

The following theorem provides us with a unique solution of the
Dirichlet problem on unbounded domains. 
Note, however, that the point at infinity is regarded as a boundary point 
even if the usual boundary $\bdry\Om$ is bounded.
This additional requirement is necessary if $p < Q$.

\begin{thm} \label{thm-uniq-bdd-pharm}
Assume that \eqref{eq-ass-pQ}, \eqref{eq-ass-pQ-PI}
and \eqref{eq-Om-cond} are satisfied.
Let $f \in C(\bdhat\Om)$.
Then $u=Pf$ is the 
unique bounded \p-harmonic function $u$ on $\Om$
such that
\[ 
     \lim_{\Om \ni y \to x} u(y)=f(x)
     \quad \text{for $\Cp$-q.e.\ } x \in \bdry \Om
\] 
and also for $x=\infty$ when $p<Q$.
\end{thm}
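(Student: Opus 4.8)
The statement is essentially a ``uniqueness within the resolutive solution'' result, so the natural strategy is to transfer the corresponding bounded-domain result for the sphericalization $(\Xdot,\dha,\muha)$, where all the heavy machinery (existence of barriers, the Kellogg property, and uniqueness of solutions attaining prescribed continuous boundary data q.e.) is already available from \cite{BBbook} and \cite{BBS2}. By Proposition~\ref{prop-PI-for-muh} the space $(\Xdot,\dha,\muha)$ is complete, doubling and supports a \p-Poincar\'e inequality, and by \eqref{eq-Om-cond} together with Lemma~\ref{lem-Cp=0} we have $\Cphat(\Xdot\setm\Om)>0$, so $\Om$ is a legitimate bounded open subset of a space to which the bounded theory applies. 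Theorem~\ref{thm-resol-C} already gives that $f\in C(\bdhat\Om)$ is resolutive, so $u=Pf$ is well defined and \p-harmonic on $\Om$; boundedness of $u$ follows since $f$ is bounded on the compact set $\bdhat\Om$ and $\min f\le\lP f\le\uP f\le\max f$.

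\textbf{Step 1 (the limit statement for $u=Pf$).} First I would show that $u=Pf$ does attain the boundary data in the stated sense. Viewing $\Om\subset\Xdot$, the Kellogg property for bounded domains (see \cite[Theorem~10.5]{BBbook}) says that $\Cphat$-q.e.\ point of $\bdhat\Om$ is regular; combined with resolutivity of $f\in C(\bdhat\Om)$, this yields $\lim_{\Om\ni y\to x}Pf(y)=f(x)$ for $\Cphat$-q.e.\ $x\in\bdhat\Om$, where the limit is with respect to $\dha$. Now Lemma~\ref{lem-Cp=0} translates ``$\Cphat$-q.e.\ on $\bdhat\Om$'' back to ``$\Cp$-q.e.\ on $\bdry\Om$, and also at $\infty$ when $p<Q$'' (when $p\ge Q$ the point $\infty$ has $\Cphat$-measure zero so it may simply be discarded). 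Finally, by the remark after Definition~\ref{def-Perron}, limits with respect to $\dha$, $d_a$ and $d$ (with $y\to\infty$ interpreted in the obvious way) all agree, so the limit relation holds in the form stated.

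\textbf{Step 2 (uniqueness).} Suppose $v$ is any bounded \p-harmonic function on $\Om$ with $\lim_{\Om\ni y\to x}v(y)=f(x)$ for $\Cp$-q.e.\ $x\in\bdry\Om$ and (when $p<Q$) also at $x=\infty$. By Theorem~\ref{thm-harm-equiv-X-Xdot}, $v$ is \p-harmonic with respect to $(\dha,\muha)$ as well, and by the translation of capacities in Lemma~\ref{lem-Cp=0} the hypothesis says precisely that $v$ attains $f$ as boundary values $\Cphat$-q.e.\ on $\bdhat\Om$ (this is exactly where the requirement at $\infty$ for $p<Q$ is needed, since $\Cphat(\{\infty\})>0$ in that range, while for $p\ge Q$ the point $\infty$ is negligible and no extra hypothesis is required). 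Then the uniqueness part of the bounded Dirichlet-problem theory --- e.g.\ \cite[Theorem~6.1 or Theorem~10.22]{BBS2}, or \cite[Theorem~10.22]{BBbook}, which asserts that a bounded \p-harmonic function attaining a resolutive boundary function q.e.\ must equal its Perron solution --- forces $v=Pf$ on $\Om$. Together with Step~1 this proves both existence and uniqueness.

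\textbf{Main obstacle.} The genuinely delicate point is the bookkeeping at $\infty$: one must verify that the $\Cphat$-exceptional set coming out of the Kellogg property on $\bdhat\Om$ corresponds, under Lemma~\ref{lem-Cp=0}, to a $\Cp$-exceptional subset of $\bdry\Om$ together with possibly the single point $\infty$, and that in the regime $p<Q$ the point $\infty$ may carry positive $\Cphat$-capacity and hence must be included as a bona fide boundary point with its own limit condition --- whereas for $p\ge Q$ it is capacity-negligible and the condition at $\infty$ is automatically dispensable. Everything else is a routine application of the bounded-domain theory once the equivalences of Sections~\ref{sect-Newt}--\ref{sect-pharm} are in place.
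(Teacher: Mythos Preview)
Your proposal is correct and follows essentially the same route as the paper: transfer the bounded-domain uniqueness result on $(\Xdot,\dha,\muha)$ via the sphericalization machinery and use Lemma~\ref{lem-Cp=0} to translate the $\Cphat$-q.e.\ statement on $\bdhat\Om$ into the $\Cp$-q.e.\ statement on $\bdry\Om$ together with the dichotomy at $\infty$. The paper is simply terser, citing \cite[Theorem~10.24]{BBbook} (rather than Theorem~10.22, which is the resolutivity/perturbation result) for the bounded uniqueness step; your description of the content is accurate even if the numbering is slightly off.
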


\begin{proof}
This follows directly from \cite[Theorem~10.24]{BBbook}, together with
Lemma~\ref{lem-Cp=0} and the above discussion.
\end{proof}

For Newtonian functions, and more generally 
Dirichlet functions, we obtain the following resolutivity and
uniqueness results
corresponding to Theorems~\ref{thm-resol-C} and~\ref{thm-uniq-bdd-pharm}.

\begin{thm} \label{thm-Np-res}
Assume that \eqref{eq-ass-pQ}, \eqref{eq-ass-pQ-PI}
and \eqref{eq-Om-cond} are satisfied.
Let $f\in\Dp(X,d,\mu)$ and assume that $h:\bdhat\Om \to \eR$ 
vanishes $\Cp$-q.e.\ on $\bdry\Om$.
Then the following hold\/\textup{:}
\begin{enumerate}
\item
If $p<Q$, $h(\infty)=0$ and $\lim_{y\to\infty}f(y)=:f(\infty)$ 
exists\/ \textup{(}in $\eR$\textup{)},
then both $f$ and $f+h$ are resolutive and $\Hp f = \Hp (f+h)$.
\item  \label{b-b}
If $p\ge Q$, then both $f$ and $f+h$ are resolutive and $\Hp f = \Hp (f+h)$.
Moreover, 
the requirement~\eqref{eq-def-Perron} in the 
definition of $\lP f$ and $\uP f$ 
only needs to be satisfied  at
finite boundary points $x\in\bdry\Om$.
\end{enumerate}
\end{thm}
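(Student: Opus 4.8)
The plan is to push the whole problem over to the sphericalization $(\Xdot,\dha,\muha)$, where $\Om$ is a \emph{bounded} open set, and then quote the resolutivity and invariance theorems for Dirichlet boundary data on bounded open sets. By Theorem~\ref{thm-harm-equiv-X-Xdot} and the remark following Definition~\ref{def-Perron}, superharmonicity, Perron solutions and resolutivity are the same with respect to $(d,\mu)$ and $(\dha,\muha)$, so it suffices to prove both parts for $\Om$ viewed as a bounded open subset of $(\Xdot,\dha,\muha)$. By Proposition~\ref{prop-PI-for-muh} and the discussion preceding it, this space is complete (being compact), doubling, and supports a \p-Poincar\'e inequality with $\muha(\Xdot)<\infty$, while $\Cphat(\Xdot\setm\Om)>0$ by~\eqref{eq-Om-cond} and Lemma~\ref{lem-Cp=0}; hence the theory of Perron solutions for bounded open sets is available for $\Om\subset\Xdot$. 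Moreover, by Proposition~\ref{prop-Dp-equiv}, $f\in\Dp(X,d,\mu)=\Dp(\Xdot\setm\{\infty\},\dha,\muha)$ with $\int_X g_f^p\,d\mu=\int_{\Xdot}\gh_f^p\,d\muha$.

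The next step is to extend $f$ to a Dirichlet function on all of $\Xdot$, and this is where the two cases split. If $p\ge Q$, then $\Cphat(\{\infty\})=0$ by Lemma~\ref{lem-Cp=0}\,\ref{Cp-ii}; since Dirichlet spaces are unaffected by sets of capacity zero, assigning $f$ any value at $\infty$ gives $f\in\Dp(\Xdot,\dha,\muha)$ with the same energy, and that value plays no role below. If $p<Q$, the hypothesis $\lim_{y\to\infty}f(y)=:f(\infty)$ says exactly that the extension of $f$ to $\Xdot$ is continuous at $\infty$. Using the characterization of Dirichlet functions in terms of absolute continuity along \p-almost every curve, together with the continuity of $f$ at $\infty$ (which forces $f\circ\ga$ to be absolutely continuous on all of $[0,l_\ga]$ for \p-a.e.\ curve $\ga:[0,l_\ga]\to\Xdot$, since it is absolutely continuous on each component of $[0,l_\ga]\setm\ga^{-1}(\{\infty\})$ with $|(f\circ\ga)'|\le\gh_f\circ\ga$, and has no jump across $\ga^{-1}(\{\infty\})$), one checks that $\gh_f$, extended by $0$ at $\infty$, remains a \p-weak upper gradient of $f$ on $\Xdot$; thus $f\in\Dp(\Xdot,\dha,\muha)$ and $\infty\in\bdhat\Om$ carries the well-defined boundary value $f(\infty)$.

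It remains to convert the perturbation hypothesis and invoke the bounded-domain results. By Lemma~\ref{lem-Cp=0}\,\ref{Cp-i}, $h$ vanishes $\Cphat$-q.e.\ on $\bdy\Om$; at $\infty$ it vanishes by assumption when $p<Q$ and lies in the $\Cphat$-null set $\{\infty\}$ when $p\ge Q$, so in both cases $h$ vanishes $\Cphat$-q.e.\ on $\bdhat\Om$. The resolutivity and invariance theorems for Dirichlet boundary data on bounded open sets, established in Bj\"orn--Bj\"orn--Sj\"odin~\cite{BBSjodin} (see also \cite{BBS2} and \cite{Hansevi2}), then show that $f$ and $f+h$ are resolutive with respect to $\Om\subset\Xdot$ and $\Hp f=\Hp(f+h)$; by the first paragraph the same holds for $\Om\subset X$. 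Finally, the ``moreover'' in~\ref{b-b} is proved exactly as in the proof of Theorem~\ref{thm-resol-C}: when $p\ge Q$, so that $\Cphat(\{\infty\})=0$, any superharmonic $u$ on $\Om$ bounded from below and satisfying~\eqref{eq-def-Perron} at all finite $x\in\bdy\Om$ belongs to $\UU_{f+\hh}$ with $\hh=-\infty\chi_{\{\infty\}}$, whence $u\ge\uP(f+\hh)=\uP f$ by the invariance just proved; taking the infimum over such $u$ shows that relaxing~\eqref{eq-def-Perron} to finite boundary points leaves $\uP f$, and by symmetry $\lP f$, unchanged.

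I expect the main obstacle to be the case $p<Q$ in the second paragraph: because $\{\infty\}$ then has positive $\Cphat$-capacity it cannot simply be discarded, and one must verify that the mere existence of a limit at $\infty$ — rather than the stronger $\Cphat$-quasicontinuity encoded in the Newtonian space — already places the extended $f$ in $\Dp(\Xdot,\dha,\muha)$ with a boundary value at $\infty$ compatible with Definition~\ref{def-Perron}. The curve argument indicated above is precisely where this is checked, and also where the hypothesis that $\lim_{y\to\infty}f(y)$ exists is used in an essential way; a cleaner route, if available, would be to cite a removability statement for Dirichlet functions at a single point with a limit directly from \cite{BBbook}.
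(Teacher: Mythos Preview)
Your proof is correct and follows essentially the same approach as the paper: reduce to the bounded set $\Om\subset\Xdot$ via sphericalization, show $f\in\Dp(\Xdot,\dha,\muha)$ by a curve argument (using $\Cphat(\{\infty\})=0$ when $p\ge Q$ and continuity of $f$ at $\infty$ when $p<Q$), then invoke the bounded-domain resolutivity theorem for Dirichlet data---the paper cites specifically (the bounded case of) Theorem~7.6 in Hansevi~\cite{Hansevi2}---and handle the ``moreover'' in~\ref{b-b} exactly as in Theorem~\ref{thm-resol-C}. The only cosmetic difference is that the paper verifies the upper gradient inequality directly on curves through $\infty$ (reducing to $\ga^{-1}(\{\infty\})=\{0\}$), whereas you phrase the same computation via absolute continuity of $f\circ\ga$.
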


\begin{proof}
The resolutivity and invariance follow from the sphericalization
process together with
(the bounded case of) Theorem~7.6 in Hansevi~\cite{Hansevi2},
if we can show that $f\in\Dp(\Xdot,\dha,\muha)$.
To this end, note that $f\in\Dp(X,\dha,\muha)$, by 
Proposition~\ref{prop-Dp-equiv}.
Let $\gh\in L^p(X,\muha)$ be an upper gradient of $f$ in $X$ 
with respect to $\dha$, and let $\gh(\infty)$ be arbitrary.

If $p\ge Q$ then $\Cphat(\{\infty\})=0$, by Lemma~\ref{lem-Cp=0},
and hence \p-almost every curve 
in $\Xdot$ avoids $\infty$
(by \cite[Proposition~1.48]{BBbook}),
which immediately implies that $\gh$ is a \p-weak upper gradient of $f$
in $\Xdot$, and thus $f\in\Dp(\Xdot,\dha,\muha)$.

For $p<Q$, let $\ga\subset\Xdot$ be a rectifiable curve.
If $\ga \subset X$, there is nothing to prove. 
So, by
splitting $\ga$ into parts and reversing the orientation, if necessary,
we can assume that $\ga^{-1}(\{\infty\})=\{0\}$.
The continuity of $f$ at $\infty$ then yields
\[
|f(\ga(0))-f(\ga(l_\ga))| = \lim_{t\to0} |f(\ga(t))-f(\ga(l_\ga))|
\le \int_\ga \gh\,ds.
\]
Since $\ga$ was arbitrary, we conclude that $\gh$ is an upper gradient
of $f$ in $\Xdot$, and thus $f\in\Dp(\Xdot,\dha,\muha)$.

The last part in \ref{b-b}
is proved in the same way as the similar statement in 
Theorem~\ref{thm-resol-C}.
\end{proof}

\begin{prop} 
Assume that \eqref{eq-ass-pQ}, \eqref{eq-ass-pQ-PI}
and \eqref{eq-Om-cond} are satisfied.
Let $f\in\Dp(X,d,\mu)$ be bounded and assume that $u$ is a bounded
\p-harmonic function in $\Om$ such that
\begin{equation}   \label{eq-lim-qe}
     \lim_{\Om \ni y \to x} u(y)=f(x)
     \quad \text{for $\Cp$-q.e.\ } x \in \bdry \Om.
\end{equation}
If $p<Q$, assume in addition that 
$\lim_{\Om \ni y \to \infty} u(y)=\lim_{y\to\infty}f(y)$. 
Then $u=Pf$.
\end{prop}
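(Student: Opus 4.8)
The plan is to push the entire situation forward to the sphericalization $(\Xdot,\dha,\muha)$, in which $\Om$ is a \emph{bounded} open set, and there appeal to the corresponding uniqueness result for bounded open sets. First I would check that Theorem~\ref{thm-Np-res} applies, so that $Pf$ is meaningful: when $p<Q$ the hypothesis $\lim_{\Om\ni y\to\infty}u(y)=\lim_{y\to\infty}f(y)$ in particular forces $f(\infty):=\lim_{y\to\infty}f(y)$ to exist and, since $u$ is bounded, to be finite; when $p\ge Q$ no condition at $\infty$ is needed at all. Hence $f$ is resolutive with respect to $\Om$ and $Pf=\uP f=\lP f$ is a well-defined bounded \p-harmonic function on $\Om$. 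Exactly as in the proof of Theorem~\ref{thm-Np-res}, $f\in\Dp(\Xdot,\dha,\muha)$; and since $f$ is bounded while $\muha(\Xdot)<\infty$ by Proposition~\ref{prop-PI-for-muh}, even $f\in\Np(\Xdot,\dha,\muha)$. By Theorem~\ref{thm-harm-equiv-X-Xdot}, $u$ is a bounded \p-harmonic function in $\Om$ also with respect to $(\dha,\muha)$.

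Next I would transfer the boundary condition to $\bdhat\Om=\bdy\Om\cup\{\infty\}$, the boundary of $\Om$ within $\Xdot$. For a finite point $x\in\bdy\Om$ the limit $\lim_{\Om\ni y\to x}u(y)$ is the same whether computed in $d$ or in $\dha$, and by Lemma~\ref{lem-Cp=0}\,\ref{Cp-i} a subset of $\bdy\Om$ is $\Cphat$-null exactly when it is $\Cp$-null, so the exceptional set on $\bdy\Om$ is still $\Cphat$-null. At $\infty$: if $p\ge Q$ then $\Cphat(\{\infty\})=0$ by Lemma~\ref{lem-Cp=0}\,\ref{Cp-ii}, so no requirement at $\infty$ is needed, while if $p<Q$ the extra hypothesis yields $\lim_{\Om\ni y\to\infty}u(y)=f(\infty)$. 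Consequently $\lim_{\Om\ni y\to x}u(y)=f(x)$ for $\Cphat$-q.e.\ $x\in\bdhat\Om$. Finally, $\Cphat(\Xdot\setm\Om)>0$, which is just \eqref{eq-Om-cond} restated via Lemma~\ref{lem-Cp=0}.

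Now $\Om$ is a bounded open subset of $(\Xdot,\dha,\muha)$ with $\Cphat(\Xdot\setm\Om)>0$, $f\in\Np(\Xdot,\dha,\muha)$ is bounded, and $u$ is a bounded \p-harmonic function with $\lim_{\Om\ni y\to x}u(y)=f(x)$ for $\Cphat$-q.e.\ $x\in\bdhat\Om$. The uniqueness theorem for bounded open sets --- see e.g.\ \cite[Theorem~10.16]{BBbook} (or the bounded case of Hansevi~\cite{Hansevi2}), which in fact needs only the existence of such a bounded \p-harmonic function --- then gives, in $(\Xdot,\dha,\muha)$, that $u$ equals the Perron solution of $f$ in $\Om$. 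By Theorem~\ref{thm-harm-equiv-X-Xdot} and the discussion following Definition~\ref{def-Perron}, Perron solutions (and resolutivity) are the same with respect to $(d,\mu)$ and $(\dha,\muha)$, and by the first step this common Perron solution is $Pf$; hence $u=Pf$.

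The one genuinely delicate point --- and the step I expect to require the most care to write out --- is the faithful transfer of the q.e.-boundary-limit hypothesis under sphericalization, in particular the bookkeeping at $\infty$: for $p<Q$ the point $\infty$ is a capacitarily non-negligible boundary point and the extra assumption on $u$ there is essential, whereas for $p\ge Q$ one has $\Cphat(\{\infty\})=0$ and the condition at $\infty$ may simply be dropped. Once this, together with the passage from $\Dp$ to $\Np$ on $\Xdot$, is in place, everything else is a direct application of the already-established bounded-domain theory and of the invariance results of Section~\ref{sect-pharm}.
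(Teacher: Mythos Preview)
Your proposal is correct and follows essentially the same route as the paper: push everything to the sphericalization, note that $f\in\Dp(\Xdot,\dha,\muha)$ (via the argument in the proof of Theorem~\ref{thm-Np-res}) and hence $f\in\Np(\Xdot,\dha,\muha)$ since $f$ is bounded and $\muha(\Xdot)<\infty$, then apply the bounded-domain uniqueness result \cite[Corollary~10.16]{BBbook} together with Lemma~\ref{lem-Cp=0}. The only cosmetic discrepancy is that the reference is Corollary~10.16 in~\cite{BBbook}, not Theorem~10.16.
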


Note that, unlike in Theorem~\ref{thm-uniq-bdd-pharm},
the existence of a function satisfying~\eqref{eq-lim-qe}
for noncontinuous boundary data is not guaranteed by the Kellogg property
(Theorem~\ref{thm-Kellogg} below).

\begin{proof}
By the proof of Theorem~\ref{thm-Np-res},
$f \in \Dp(\Xdot,\dha,\muha)$ and hence $f\in\Np(\Xdot,\dha,\muha)$
since it is bounded.
Thus, the statement
follows directly from \cite[Corollary~10.16]{BBbook},
together with the sphericalization process and
Lemma~\ref{lem-Cp=0}.
\end{proof}

Also boundary regularity results carry over to unbounded domains, 
the most important is maybe the Kellogg property,
which we obtain using \cite[Theorem~3.9]{BBS}, together with
Lemma~\ref{lem-Cp=0}.
Recall that $x\in \bdhat\Om$ is called \emph{regular} if 
\begin{equation}    \label{eq-def-regular-pt}
\lim_{\Om \ni y \to x} Pf(y)=f(x)
\quad \text{for all }f\in C(\bdhat\Om).
\end{equation}

\begin{thm} \label{thm-Kellogg} \textup{(Kellogg property)}
Assume that \eqref{eq-ass-pQ}, \eqref{eq-ass-pQ-PI}
and \eqref{eq-Om-cond} are satisfied.
The set of irregular boundary points in $\bdy\Om$
has $\Cp$-capacity zero.
Moreover, $\infty$ is always regular if $p<Q$.
\end{thm}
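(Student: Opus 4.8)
The plan is to deduce the Kellogg property on $\Om\subset X$ from the known Kellogg property on the sphericalization $\Xdot$, where $\Om$ is bounded and all standing assumptions for the bounded theory are in force. By Proposition~\ref{prop-PI-for-muh}, $(\Xdot,\dha,\muha)$ is a complete metric space with a doubling measure supporting a \p-Poincar\'e inequality, and by \eqref{eq-Om-cond} together with Lemma~\ref{lem-Cp=0} we have $\Cphat(\Xdot\setm\Om)>0$, so the bounded theory applies. Moreover, by Theorem~\ref{thm-harm-equiv-X-Xdot} the \p-harmonic functions and superharmonic functions with respect to $(d,\mu)$ and $(\dha,\muha)$ coincide, and by the remark after Definition~\ref{def-Perron} the Perron solutions $\uP f$, $\lP f$ and hence the notion of a regular boundary point in $\bdhat\Om$ are the same whether computed with respect to $(d,\mu)$ or $(\dha,\muha)$. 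Thus a point $x\in\bdhat\Om$ is regular in our sense if and only if it is regular for the bounded Dirichlet problem on $\Om$ viewed inside $\Xdot$.

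First I would invoke \cite[Theorem~3.9]{BBS}: since $\Om$ is a bounded open subset of the space $(\Xdot,\dha,\muha)$ satisfying the needed assumptions, the set $I$ of irregular points of $\bdhat\Om$ satisfies $\Cphat(I)=0$. Next I would split $I=I_0\cup(I\cap\{\infty\})$ with $I_0=I\cap\bdy\Om$, so $\infty\notin I_0$. By Lemma~\ref{lem-Cp=0}\,\ref{Cp-i}, $\Cphat(I_0)=0$ implies $\Cp(I_0)=0$, which is exactly the first assertion: the set of irregular finite boundary points has $\Cp$-capacity zero.

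For the second assertion, suppose $p<Q$. By Lemma~\ref{lem-Cp=0}\,\ref{Cp-ii} (the ``only if'' direction), $\Cphat(\{\infty\})=0$ would force $p\ge Q$; hence when $p<Q$ we have $\Cphat(\{\infty\})>0$. Therefore $\infty$ cannot lie in the irregular set $I$, because $I$ has zero $\Cphat$-capacity; in other words $\infty$ is regular. This is the whole argument, and I expect the only point needing care is verifying that all hypotheses of \cite[Theorem~3.9]{BBS} are genuinely met by $(\Xdot,\dha,\muha)$ and the bounded set $\Om$ — in particular completeness, the doubling property and \p-Poincar\'e inequality (all from Proposition~\ref{prop-PI-for-muh}) and the nontriviality condition $\Cphat(\Xdot\setm\Om)>0$ (from \eqref{eq-Om-cond} via Lemma~\ref{lem-Cp=0}) — together with the identification, via Theorem~\ref{thm-harm-equiv-X-Xdot} and the remark after Definition~\ref{def-Perron}, of regular points in the two settings. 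No genuinely hard new estimate is required; the content has all been front-loaded into Sections~\ref{sect-sphericalization}--\ref{sect-PI}.

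\begin{proof}
Since $\Om\subset X$ is unbounded, it is a bounded open subset of $(\Xdot,\dha)$. By Proposition~\ref{prop-PI-for-muh}, $(\Xdot,\dha,\muha)$ is complete with $\muha$ doubling and supporting a \p-Poincar\'e inequality, and by \eqref{eq-Om-cond} together with Lemma~\ref{lem-Cp=0} we have $\Cphat(\Xdot\setm\Om)>0$. Thus all assumptions of the bounded theory are satisfied for $\Om$ regarded inside $\Xdot$. By Theorem~\ref{thm-harm-equiv-X-Xdot} and the remark following Definition~\ref{def-Perron}, the Perron solutions, and hence the notion of a regular point $x\in\bdhat\Om$ as in~\eqref{eq-def-regular-pt}, are the same for $(d,\mu)$ and $(\dha,\muha)$.

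By \cite[Theorem~3.9]{BBS} applied in $(\Xdot,\dha,\muha)$, the set $I$ of irregular points of $\bdhat\Om$ satisfies $\Cphat(I)=0$. Since $\bdy\Om\subset X$ and $\infty\notin\bdy\Om$, Lemma~\ref{lem-Cp=0}\,\ref{Cp-i} gives that $\Cphat(I\cap\bdy\Om)=0$ implies $\Cp(I\cap\bdy\Om)=0$. Hence the set of irregular points in $\bdy\Om$ has $\Cp$-capacity zero.

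Finally, assume $p<Q$. If $\Cphat(\{\infty\})=0$ then, by Lemma~\ref{lem-Cp=0}\,\ref{Cp-ii}, $p\ge Q$, a contradiction. Therefore $\Cphat(\{\infty\})>0$, so $\infty\notin I$; that is, $\infty$ is regular.
\end{proof}
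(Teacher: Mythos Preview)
Your proof is correct and follows essentially the same approach as the paper: the paper simply states that the Kellogg property is obtained from \cite[Theorem~3.9]{BBS} together with Lemma~\ref{lem-Cp=0}, and you have spelled out precisely how these two ingredients combine. One minor remark: completeness of $(\Xdot,\dha)$ is not part of Proposition~\ref{prop-PI-for-muh} but rather follows from $\Xdot$ being compact (as the one-point compactification of the proper space $X$).
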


Useful properties of boundary regularity are its locality and 
the barrier characterization, which transfer to unbounded domains in 
the following way.
A superharmonic function $u$ in $\Om$ is a \emph{barrier} 
at $x_0\in\bdhat\Om$ if
\[
\lim_{\Om\ni y\to x_0} u(y)=0 \quad \text{and} \quad
\liminf_{\Om\ni y\to x} u(y)>0 \quad \text{for every } x\in\bdhat\Om\setm\{x_0\}.
\]

\begin{thm}  \label{thm-barrier+local}
Assume that \eqref{eq-ass-pQ}, \eqref{eq-ass-pQ-PI}
and \eqref{eq-Om-cond} are satisfied.
A point $x_0\in\bdhat\Om$ is regular if and only if there exists a 
barrier at $x_0$.
\textup{(}Equivalently, the barrier can be chosen positive and 
continuous.\textup{)}
In this case, \eqref{eq-def-regular-pt} holds for all bounded
$f:\bdhat\Om\to\R$ which are continuous at $x_0$.

Moreover, regularity is local in the following sense\/\textup{:}
\begin{enumerate}
\item \label{a-aa}
A finite boundary point $x_0\in\bdy\Om$ is regular with respect 
to $\Om$ if and only if it is regular with respect to $\Om\cap G$ for
some {\rm(}or equivalently all\/{\rm)} open $G\ni x_0$. 
\item  \label{b-bb}
The point $\infty\in\bdhat\Om$ is regular 
with respect to $\Om$ if and only if it is regular with respect to
$\Om\setm K$ for some {\rm(}or equivalently all\/{\rm)} compact $K$.
\end{enumerate}
\end{thm}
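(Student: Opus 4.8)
The plan is to derive Theorem~\ref{thm-barrier+local} by transferring the corresponding results for bounded open sets from Bj\"orn--Bj\"orn~\cite{BB} (and \cite{BBS2}) to $\Om$ viewed as a bounded open subset of $(\Xdot,\dha,\muha)$, using the machinery assembled in Sections~\ref{sect-sphericalization}--\ref{sect-pharm}. The key observation, already used repeatedly above, is that by Theorem~\ref{thm-harm-equiv-X-Xdot} superharmonicity and \p-harmonicity are the same with respect to $(d,\mu)$ and $(\dha,\muha)$, and that the boundary of $\Om$ in the sense of $\Xdot$ is precisely $\bdhat\Om=\bdy\Om\cup\{\infty\}$. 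Moreover, limits in $\Om$ towards a point $x_0\in\bdhat\Om$ are the same whether taken with respect to $d$ (with $y\to\infty$ interpreted in the obvious way) or with respect to $\dha$, and likewise for the Perron solutions $Pf$, which by the discussion following Definition~\ref{def-Perron} coincide in the two settings. Thus the notions of \emph{regular boundary point}, of \emph{barrier at $x_0$}, and of the validity of \eqref{eq-def-regular-pt} all translate verbatim between $(X,d,\mu)$ and $(\Xdot,\dha,\muha)$.

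With this dictionary in place, the barrier characterization and the positive-continuous-barrier refinement are immediate: since $(\Xdot,\dha,\muha)$ is complete, Ahlfors-type doubling (Proposition~\ref{prop-PI-for-muh}) and supports a \p-Poincar\'e inequality, and $\CpXdot(\Xdot\setm\Om)>0$ by \eqref{eq-Om-cond} and Lemma~\ref{lem-Cp=0}, the bounded-domain theory applies; I would cite \cite[Theorem~6.1]{BB} (or the relevant barrier results in \cite{BBS}, \cite{BB}) to conclude that $x_0\in\bdhat\Om$ is regular if and only if there is a barrier at $x_0$, that the barrier may be taken positive and continuous, and that \eqref{eq-def-regular-pt} then holds for all bounded $f:\bdhat\Om\to\R$ continuous at $x_0$. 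No computation is needed here beyond invoking the translation.

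For the locality statements, part~\ref{a-aa} concerning a finite point $x_0\in\bdy\Om$ is again a direct transfer: in $\Xdot$, the point $x_0$ is an ordinary (finite) boundary point, an open $G\ni x_0$ in $X$ is the same as an open neighbourhood of $x_0$ in $\Xdot$, and the locality of regularity for bounded open sets — \cite[Theorem~6.1 or the locality part thereof]{BB} — gives exactly the claimed equivalence, provided one checks that $\Om\cap G$ still satisfies the capacity hypothesis $\CpXdot((\Xdot\setm\Om)\cup(\Xdot\setm G))>0$, which is automatic since $\Xdot\setm G$ is a neighbourhood of some point and finite points have positive $\Cp$-capacity (Lemma~\ref{lem-Cp=0}, using $\uSo^X(x)=[Q,\infty)$). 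Part~\ref{b-bb}, regularity at $\infty$ with respect to $\Om$ versus $\Om\setm K$ for compact $K\subset X$, is the same statement applied at the \emph{finite} boundary point $\infty$ of $\Xdot$: a compact $K\subset X$ has $\dha$-closure $K$ (it does not touch $\infty$), so $\Xdot\setm K$ is an open $\dha$-neighbourhood of $\infty$, and $\Om\setm K$ corresponds to $\Om\cap(\Xdot\setm K)$; thus \ref{b-bb} is literally the locality statement \ref{a-aa} of the bounded theory applied with $x_0=\infty$, once one notes that $\CpXdot$ of the relevant complement is positive (if $p<Q$ because $\infty$ itself may have zero capacity but $\Xdot\setm\Om$ does not, and if $p\ge Q$ one must instead verify $\Cp(X\setm(\Om\setm K))>0$ or $p<Q$ — but $\Xdot\setm\Om\subset\Xdot\setm(\Om\setm K)$, so \eqref{eq-Om-cond} is inherited).

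The main obstacle, and the only point requiring genuine care rather than bookkeeping, is verifying that the hypotheses of the bounded-domain results in \cite{BB} are met by $\Om\cap G$ and $\Om\setm K$ in $\Xdot$ — in particular the condition that the complement has positive capacity, needed for the Perron method and the Kellogg property underlying the barrier characterization — and making sure that ``$G$ open'' and ``$K$ compact'' in the original unbounded space $X$ correspond correctly to open neighbourhoods of $x_0$ (respectively $\infty$) in $\Xdot$; this is where the one-point compactification and Lemma~\ref{lem-Cp=0} must be applied with attention to the case distinction $p<Q$ versus $p\ge Q$. Everything else is a routine application of the already-established equivalences.
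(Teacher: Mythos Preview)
Your proposal is correct and takes essentially the same approach as the paper: transfer the question to the bounded open set $\Om\subset(\Xdot,\dha,\muha)$ via the sphericalization machinery and then invoke the bounded-domain barrier and locality results from \cite{BB} (the paper cites Theorems~4.2 and~6.1 there, or \cite[Theorem~11.11]{BBbook}). Your write-up is considerably more detailed than the paper's one-line proof, but the strategy is identical.

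One small correction: your justification that $\Om\cap G$ (and $\Om\setm K$) has complement of positive $\Cphat$-capacity is slightly off --- singletons need not have positive capacity when $p\le Q$. The cleaner argument is simply that $\Xdot\setm(\Om\cap G)\supset\Xdot\setm\Om$, and the latter has positive $\Cphat$-capacity by \eqref{eq-Om-cond} together with Lemma~\ref{lem-Cp=0}; similarly for $\Om\setm K$. This is presumably what the paper has in mind when it says the result ``follows directly from the sphericalization process.''
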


\begin{proof}
This follows directly from the sphericalization process, together with
Theorems~4.2 and~6.1 in Bj\"orn--Bj\"orn~\cite{BB}
(or \cite[Theorem~11.11]{BBbook}).
\end{proof}

Some of the above results  are new also for unweighted 
$\R^n$, $n \ge 2$, with
$p > n/2$, 
but the Kellogg property and the barrier
characterization appeared already in
Kilpel\"ainen~\cite[Theorems~1.5 and Corollary~5.6]{Kilp89}
in this setting.
We also obtain several
new characterizations of boundary regularity in 
unbounded sets corresponding to the results in 
Bj\"orn--Bj\"orn~\cite[Theorem~6.1]{BB} (or \cite[Theorem~11.11]{BBbook}),
see also Heinonen--Kilpel\"ainen--Martio~\cite[Chapter~9]{HeKiMa}.

A direct consequence of 
Theorem~\ref{thm-barrier+local}\,\ref{a-aa} is that the Wiener type criterion
from Bj\"orn--MacManus--Shanmugalingam~\cite{BMS} 
and Bj\"orn~\cite{JB-pfine}, \cite{JBCalcVar} can be applied also to
finite boundary points in unbounded domains.
Regularity of the point at $\infty$ will be discussed in the next section.

Other boundary regularity results that generalize from bounded
to unbounded sets are the trichotomy classification
into regular, semiregular and strongly irregular boundary points
from Bj\"orn~\cite{ABclass} (or \cite[Chapter~13]{BBbook}).
These results can be applied to finite boundary points 
as well as to $\infty$.
Moreover, the results on approximation by regular sets
and on so-called Wiener solutions of the Dirichlet problem
from Bj\"orn--Bj\"orn~\cite{BB2} (or \cite[Chapter~14]{BBbook})
generalize in a similar way.

Furthermore, since the \p-energy is preserved under sphericalization, 
also quasiminimizers
are preserved in just the same way as \p-harmonic functions.
We can thus
generalize many earlier boundary regularity results for quasiminimizers
from bounded to unbounded sets (provided that $X$ satisfies our
standing assumptions).
These include results in
\cite[Sections~4 and~5]{ABkellogg}, 
\cite[Section~6]{ABclass}, \cite[Section~7]{ABcluster}, 
\cite[Theorem~1.1]{BB2},
\cite[Section~6]{BMartio},
\cite[Theorems~2.12 and~2.13]{Bj02} 
and \cite[Theorem~1.1]{JBCalcVar}.

\section{Resolutivity and regularity at \texorpdfstring{$\infty$}{oo}}
\label{sect-infty}

For unbounded $\Om$, what happens at $\infty$ is of particular interest.
Recall that by Theorem~\ref{thm-Kellogg}, $\infty$ is always regular
if $p<Q$.
Theorem~\ref{thm-barrier+local}\,\ref{b-bb}, combined 
with Theorem~7.5 in Bj\"orn~\cite{ABcluster} 
(or \cite[Theorem~11.27]{BBbook}), immediately
implies the following result.

\begin{prop}  \label{prop-infty-components}
Assume that \eqref{eq-ass-pQ}, \eqref{eq-ass-pQ-PI}
and \eqref{eq-Om-cond} are satisfied.
Let $k>0$. Then $\infty\in\bdhat\Om$ is regular with respect to $\Om$ 
if and only if it is regular with respect to every unbounded 
component $G$ of $\Om\setm \itoverline{B(a,k)}$.
This in particular guarantees regularity of the point at $\infty$ 
if there are no such unbounded  
components.
\end{prop}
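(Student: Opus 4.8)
The plan is to transfer the question to the sphericalization $\Xdot$, where $\infty$ is an ordinary finite boundary point of the bounded open set $\Om$, and there combine the locality of regularity at $\infty$ from Theorem~\ref{thm-barrier+local}\,\ref{b-bb} with the fact that regularity of a boundary point can be tested component by component. First I would apply Theorem~\ref{thm-barrier+local}\,\ref{b-bb} with the compact set $K=\itoverline{B(a,k)}$ (compact since $X$ is proper), obtaining that $\infty$ is regular with respect to $\Om$ if and only if it is regular with respect to $\Om':=\Om\setm\itoverline{B(a,k)}$. At this point one checks that the standing assumptions pass to $\Om'$: it is open in $X$, hence in $\Xdot$; it is still unbounded because $\Om$ is unbounded and $\itoverline{B(a,k)}$ is bounded; and $\Cp(X\setm\Om')\ge\Cp(\itoverline{B(a,k)})>0$, so \eqref{eq-Om-cond} holds for $\Om'$. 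The same reasoning applies to every unbounded component $G$ of $\Om'$, since $\itoverline{B(a,k)}\subset X\setm G$ and $G$ is unbounded by definition.

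Next I would view $\Om'$ as a bounded open subset of $(\Xdot,\dha,\muha)$, which by Proposition~\ref{prop-PI-for-muh} is a complete doubling metric measure space supporting a \p-Poincar\'e inequality with $\muha(\Xdot)<\infty$, so the bounded theory is available. Because $\Om'$ is unbounded in $X$ we have $\infty\in\itoverline{\Om'}\setm\Om'=\bdhat\Om'$. The components of $\Om'$ are the same whether $\Om'$ is regarded inside $X$ or inside $\Xdot$ (as $\infty\notin\Om'$); moreover a component $G$ of $\Om'$ has $\infty\in\bdhat G$ exactly when $G$ is unbounded, because a bounded component lies in a ball and therefore has $\infty\notin\itoverline G$ in $\Xdot$, while an unbounded component has points arbitrarily far from $a$ and hence $\infty\in\itoverline G$. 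Now I would invoke Theorem~7.5 in Bj\"orn~\cite{ABcluster} (equivalently \cite[Theorem~11.27]{BBbook}): a boundary point of an open set is regular if and only if it is regular with respect to every component of that open set having the point in its boundary. Applied to the point $\infty$ and the set $\Om'$, this says that $\infty$ is regular with respect to $\Om'$ if and only if it is regular with respect to every unbounded component $G$ of $\Om'=\Om\setm\itoverline{B(a,k)}$. Chaining with the first step yields the asserted equivalence, and if there are no unbounded components the latter condition is vacuously satisfied, so $\infty$ is regular.

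The only genuinely delicate point is the bookkeeping between the two spaces: verifying that \eqref{eq-ass-pQ}, \eqref{eq-ass-pQ-PI} and \eqref{eq-Om-cond} are inherited by $\Om'$ and by each unbounded component $G$, so that Theorem~\ref{thm-barrier+local}\,\ref{b-bb} and the component-wise regularity theorem may both be invoked, together with the identification of the components $G$ of $\Om'$ satisfying $\infty\in\bdhat G$ as precisely the unbounded components of $\Om\setm\itoverline{B(a,k)}$; everything else is a direct citation.
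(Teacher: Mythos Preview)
Your proposal is correct and follows essentially the same route as the paper: the paper's proof is a one-line appeal to Theorem~\ref{thm-barrier+local}\,\ref{b-bb} combined with Theorem~7.5 in~\cite{ABcluster} (or \cite[Theorem~11.27]{BBbook}), and you have spelled out exactly this combination together with the bookkeeping that makes it legitimate.
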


A simple application of the last part of 
Proposition~\ref{prop-infty-components} is demonstrated in the following
example.

\begin{example}  
The point $\infty\in\bdy\Om$ is regular with respect to
\[
\Om= (0,1)^2 \cup \bigcup_{j=1}^\infty (2^{-j},2^{1-j}) \times (0,2^{j}),
\]
since $\Om\setm \itoverline{B(0,2)}$ consists only of bounded components.
\end{example}

The capacity $\Cphat$ (or rather its variational 
analogue $\cphat$) near
$\infty$ plays an important role here through Wiener type criteria.
Using~\eqref{eq-int-gu-guhat}, $\cphat$ can be described by means of functions
on $X$. This
makes it possible to rewrite the Wiener type integrals at $\infty$,
appearing in Bj\"orn~\cite{JB-pfine}, \cite{JBCalcVar} and 
Bj\"orn--MacManus--Shan\-mu\-ga\-lin\-gam~\cite{BMS}, in terms
of $\Om$ and $X$.
For a more hands-on result we use the fact that porosity is sufficient
for boundary regularity and formulate the following practical condition.

\begin{thm}   \label{thm-infty-porous}
Assume that \eqref{eq-ass-pQ}, \eqref{eq-ass-pQ-PI}
and \eqref{eq-Om-cond} are satisfied.
Assume that for some $k>0$ and for each unbounded
component $G$ of $\Om\setm \itoverline{B(a,k)}$ 
there exist $\theta>0$ and $x_j\in X$ such that
$d(x_j,a)\to\infty$ as $j\to\infty$ and
\begin{equation}   \label{eq-porous-infty}
B(x_j,\theta d(x_j,a)) \cap G = \emptyset
\quad \text{for all } j=1,2,\ldots.
\end{equation}
Then $\infty\in\bdhat\Om$ is regular.
\end{thm}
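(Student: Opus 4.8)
The strategy is to reduce the porosity condition \eqref{eq-porous-infty} to the barrier characterization via Proposition~\ref{prop-infty-components} and Theorem~\ref{thm-barrier+local}\,\ref{b-bb}. By Proposition~\ref{prop-infty-components}, it suffices to show that $\infty$ is regular with respect to each unbounded component $G$ of $\Om\setm\itoverline{B(a,k)}$. Fix such a $G$ together with the points $x_j$ and the parameter $\theta>0$ from the hypothesis. After transferring everything to the sphericalization $(\Xdot,\dha,\muha)$ — where $\Om$, and hence $G$, is a \emph{bounded} open set with $\infty$ in its boundary — I want to produce a barrier at $\infty$ for $G$, and then invoke Theorem~\ref{thm-barrier+local}\,\ref{b-bb} (regularity is local: regularity of $\infty$ w.r.t.\ $\Om$ follows from regularity w.r.t.\ $\Om\setm\itoverline{B(a,k)}$).

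The heart of the matter is therefore: translate \eqref{eq-porous-infty} into a statement about porosity of $\Xdot\setm G$ at the point $\infty$ in the metric $\dha$. Using \eqref{eq-comp-d-dhat} and the explicit form of $d_a$, one computes $\dha(x_j,\infty)\simeq 1/(1+d(x_j,a))$, while any point $y$ with $d(y,x_j)<\theta d(x_j,a)$ (in particular $y$ outside the Euclidean-type ball around $x_j$) satisfies $\dha(x_j,y)\lesssim \theta d(x_j,a)/(1+d(x_j,a))^2 \simeq \theta\,\dha(x_j,\infty)^2\cdot(1+d(x_j,a))\simeq \theta\,\dha(x_j,\infty)$ — more precisely the ball $B(x_j,\theta d(x_j,a))$ contains a $\dha$-ball $\Bhat(x_j,c\theta\,\dha(x_j,\infty))$ for a constant $c>0$ depending only on the comparison constants. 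Since $\dha(x_j,\infty)\to0$, the sequence of disjoint-from-$G$ balls $\Bhat(x_j,c\theta\,\dha(x_j,\infty))$, with centers $x_j$ at $\dha$-distance $\simeq\dha(x_j,\infty)$ from $\infty$ and radii comparable to that distance, witnesses exactly that $\Xdot\setm G$ is porous at $\infty$. This computation is routine but is the one place where the two metrics must be compared carefully, so I expect it to be the main technical obstacle — in particular one must check that $c\theta$ can be taken $<\tfrac12$ or whatever threshold the porosity-implies-regularity result needs, shrinking $\theta$ if necessary.

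Once porosity of the complement at $\infty$ is established, I invoke the known fact that porosity of a boundary point implies its regularity (in a complete doubling space supporting a Poincar\'e inequality, e.g.\ \cite[Proposition~6.1]{BMS} or \cite[Corollary~11.25]{BBbook}); here $(\Xdot,\dha,\muha)$ is complete, doubling and supports a \p-Poincar\'e inequality by Proposition~\ref{prop-PI-for-muh}, so this applies. Equivalently, porosity at $\infty$ yields a barrier there for $G$. That gives regularity of $\infty$ with respect to each such $G$, and by Proposition~\ref{prop-infty-components} regularity of $\infty$ with respect to $\Om$. Finally, to phrase the porosity hypothesis cleanly, I note that it is harmless to pass to a subsequence so that $\dha(x_j,\infty)$ is strictly decreasing to $0$, which is all the cited porosity-regularity result requires.
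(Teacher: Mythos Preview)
Your proposal is correct and follows essentially the same route as the paper: reduce to each unbounded component $G$ via Proposition~\ref{prop-infty-components}, translate \eqref{eq-porous-infty} into porosity of $\Xdot\setm G$ at $\infty$ in the metric $\dha$ (the paper calls this step ``simple geometrical considerations''), and then invoke \cite[Corollary~11.25]{BBbook} on the sphericalization $(\Xdot,\dha,\muha)$ using Proposition~\ref{prop-PI-for-muh}. Your explicit comparison of $d$-balls and $\dha$-balls is exactly what the paper summarizes as $\Bhat(x_j,\theta'\dha(x_j,\infty))\subset\Bhat(\infty,2\dha(x_j,\infty))\setm G$; the detour through barriers and Theorem~\ref{thm-barrier+local}\,\ref{b-bb} is unnecessary since Proposition~\ref{prop-infty-components} already handles the passage from the components back to $\Om$.
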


\begin{proof}
Simple geometrical considerations show that \eqref{eq-porous-infty}
implies the existence of $\theta'>0$ so that for sufficiently large $j$
(and with $\Bhat$ denoting balls with respect to $\Xdot$),
\[
\Bhat (x_j,\theta'\dha(x_j,\infty)) 
        \subset \Bhat (\infty, 2\dha(x_j,\infty)) \setm G,
\]
i.e.\ that $G$ is \emph{porous} at $\infty$ with respect to $\dha$.
The sphericalization argument together with
\cite[Corollary~11.25\,(c)]{BBbook}
then implies that $\infty\in\bdhat G$ is 
regular with respect to the component $G$.
Since this is true for every unbounded component of 
$\Om\setm \itoverline{B(a,k)}$,
Proposition~\ref{prop-infty-components} concludes the proof.
\end{proof}

\begin{remark}   \label{rem-one-porous}
The proof of Theorem~\ref{thm-infty-porous} shows that if 
\eqref{eq-porous-infty} holds for one particular unbounded component 
of $\Om\setm \itoverline{B(a,k)}$ then 
$\infty$
is regular
with respect to that component.
\end{remark}

In order to capture the behaviour of functions from different
directions at $\infty$,
we will consider the Mazurkiewicz metric $\dha_M$ on $\Om$, generated by $\dha$,
cf.\ Bj\"orn--Bj\"orn--Shanmugalingam~\cite{BBSdir}, \cite{BBSfincon}.
For simplicity we will restrict ourselves to the case when
$\Om$ is connected and satisfies \eqref{eq-Om-cond}.
For $x,y\in\Om$, let
\[
\dha_M(x,y) = \inf_E \diam_{\dha} E,
\]
where the infimum is taken over all connected sets $E\subset \Om$ containing
both $x$ and $y$, and the diameter is taken with respect to $\dha$.
It also gives rise to the Mazurkiewicz boundary $\bdhat_M\Om$ 
with respect to $\dha_M$ in the usual way through completion.
The Mazurkiewicz metric $d_M$ on $\Om$ generated by $d$, and the
corresponding boundary $\bdy_M\Om$, 
are defined similarly.

It was shown in \cite{BBSdir} that upper gradients, Newtonian spaces and
\p-(super)\-har\-mon\-ic functions within $\Om$
are the same with respect to the Mazurkiewicz metric and the 
original metric generating it.
The only change needed in the definition of Perron solutions with 
respect to the Mazurkiewicz boundary is that the $\liminf$ in 
\eqref{eq-def-Perron} is with respect to $d_M$ and is required on
$\bdhat_M\Om$.
To be able to use the results from \cite{BBSdir} we assume that $\Om$ is
\emph{finitely connected at the boundary} with respect to $\dha$, 
cf.\ N\"akki~\cite{nakki70}, V\"ais\"al\"a~\cite{vaisala}, 
as well as  \cite{BBSdir}
and \cite{BBSfincon}.
This is equivalent to requiring the following two conditions:
\begin{enumerate}
\renewcommand{\theenumi}{\textup{(\roman{enumi})}}%
\renewcommand{\labelenumi}{\theenumi}
\item \label{it-fin-conn-fin}
$\Om$ is finitely connected at every finite $x\in\bdry\Om$, 
i.e.\ for every $r>0$ and every $x\in\bdy\Om$ there is an open 
neighbourhood $U\subset B(x,r)$ of $x$ such that $\Om\cap U$  
consists of only finitely many components; 
\item \label{it-fin-conn-infty}
for every $k>0$ there is a compact set $K \supset \itoverline{B(a,k)}$
such that $\Om \setm K$ has only finitely many components.
\end{enumerate}

By Proposition~2.5 in~\cite{BBSfincon},
the conditions 
\ref{it-fin-conn-fin} and 
\ref{it-fin-conn-infty} 
can equivalently be stated as follows.
For $x \in \bdy \Om$ and $r>0$, let
$N(r,x)$ be the number of components $V$
of $B(x,r) \cap \Om$ such that $x \in \overline{V}$, and let $H(r,x)$ be the union
of all the \emph{other} components of $B(x,r) \cap \Om$.
Similarly, let $N(r,\infty)$ be the number of unbounded components 
of $\Om \setm \itoverline{B(a,r)}$, and let $H(r,\infty)$ be the union
of all the bounded components of $\Om \setm \itoverline{B(a,r)}$.
Then 
\ref{it-fin-conn-fin} and 
\ref{it-fin-conn-infty} 
are equivalent to the following two conditions, respectively:
\begin{enumerate}
\renewcommand{\theenumi}{\textup{(\roman{enumi}$'$)}}%
\renewcommand{\labelenumi}{\theenumi}
\item \label{it-fin-conn-fin-var}
$N(r,x)< \infty$ and $x \notin \itoverline{H(r,x)}$,
for every $x \in \bdy \Om$ and $0<r<1$;
\item \label{it-fin-conn-infty-var}
$N(r,\infty)< \infty$ and $H(r,\infty)$ is bounded 
for every $r>1$.
\end{enumerate}

The condition~\ref{it-fin-conn-infty}, 
or equivalently \ref{it-fin-conn-infty-var},
means that the Mazurkiewicz metric $\dha_M$
distinguishes between different copies of $\infty$, each
corresponding to a decreasing sequence of unbounded components
of $\Om\setm \itoverline{B(a,k)}$, $k=1,2,\ldots$\,.
For unbounded
$\Om\subset\R^n$ with sufficiently smooth boundary it is only this
requirement at $\infty$ that takes effect since finite
connectedness at finite boundary points is 
automatically satisfied for such smooth domains.

When discussing the Dirichlet problem with respect to the Mazurkiewicz
boundary $\bdhat_M\Om$ we will restrict ourselves to 
$f\in C(\bdhat_M\Om)$, which is in fact equivalent to 
$f\in C(\bdy_M\Om)$ together with the requirement that 
\begin{equation}    \label{eq-lim-along-Om-k}
\lim_{\bdy_M\Om\ni x\to\infty} f(x) \quad
\text{exists and is finite
\emph{along each decreasing sequence} } \{\Om_k\}_{k=1}^\infty
\end{equation} 
of unbounded components $\Om_k\subset \Om\setm \itoverline{B(a,k)}$, 
$k=1,2,\ldots$\,.
Here we say that $X\ni x_n\to\infty$, as $n\to\infty$, 
along such a sequence 
$\Om_1 \supset \Om_2 \supset \ldots$
if for every $k$ there exists $N_k$
such that all $x_n$ with $n\ge N_k$ belong to the $d_M$-closure of $\Om_k$.
Note that  the limit is allowed to be different for different
directions towards $\infty$, i.e.\ 
for different sequences $\{\Om_k\}_{k=1}^\infty$.

Under the assumption of finite connectedness at the boundary,
it can be verified that
$\bdhat_M\Om$ equals $\bdry_M\Om$, together with all the copies
of $\infty$ from different directions.
Finiteness at the boundary is equivalent to the
compactness of the $\dha_M$-closure of $\Om$,
see Bj\"orn--Bj\"orn--Shanmugalingam~\cite[Theorem~1.1]{BBSfincon} or 
Karmazin~\cite[Theorem~1.3.8]{karmazin2008}.
In the terminology of 
Adamowicz--Bj\"orn--Bj\"orn--Shanmugalingam~\cite{prime} and
Estep~\cite{Estep}, the sequence
$\{\Om_k\}_{k=1}^\infty$ can be identified with a so-called \emph{prime end} 
at $\infty$, see~\cite[Theorem~9.6 and Corollary~10.9]{prime}.

The following example shows that there can be uncountably many such
directional sequences towards $\infty$.

\begin{example} \label{ex-uncountable}
Let 
\[
A=\{\al=0.\al_1\ldots \al_n \in (0,1): 
         \al_j\in\{0,1\}, \ n=1,2,\ldots\}
\] 
be the set of all $\al\in(0,1)$ with finite binary 
expansions.
Let $\|\al\|$ denote the last nonzero position.
For each $\al\in A$, consider the ray
\[
F_a= \{z=r e^{i\al\pi}\in\C: r \ge\|\al\|\}
\]
and let $\Om$ be the upper half-plane with all these rays $F_\al$, 
$\al\in A$, removed.
Then $\Om$ is finitely connected at the boundary and there are uncountably 
many directions towards $\infty$ within $\Om$, corresponding to each 
$\al \in (0,1) \setm A$.
Theorem~\ref{thm-infty-porous} implies that the point at $\infty$ is 
regular with respect to $\Om$.
\end{example}

The following result is  a direct consequence of 
the sphericalization process and Theorem~8.2 in 
Bj\"orn--Bj\"orn--Shanmugalingam~\cite{BBSdir}.

\begin{thm}  \label{thm-resol-a-b}
Assume that $\Om$ satisfies \ref{it-fin-conn-fin} and \ref{it-fin-conn-infty},
and that \eqref{eq-ass-pQ}, \eqref{eq-ass-pQ-PI}
and \eqref{eq-Om-cond} are satisfied.
Let $f\in C(\bdy_M\Om)$ be such that
$\lim_{\bdy_M\Om\ni x\to\infty}f(x)$ exists and is finite 
along each decreasing sequence of unbounded components of 
$\Om\setm \itoverline{B(a,k)}$, $k=1,2,\ldots$\,,
in the sense of~\eqref{eq-lim-along-Om-k}.
Then $f$ is resolutive with respect to the Mazurkiewicz boundary.
\end{thm}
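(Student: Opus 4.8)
The plan is to push everything through the sphericalization $(\Xdot,\dha,\muha)$, in which $\Om$ is a \emph{bounded} open set, and then apply \cite[Theorem~8.2]{BBSdir} as a black box. Here $(\Xdot,\dha)$ is complete, $\muha$ is doubling and supports a \p-Poincar\'e inequality and $\muha(\Xdot)<\infty$ by Proposition~\ref{prop-PI-for-muh}, and $\Cphat(\Xdot\setm\Om)>0$ by Lemma~\ref{lem-Cp=0} together with \eqref{eq-Om-cond}, so all the standing assumptions of \cite{BBSdir} are in force except possibly finite connectedness at the boundary.

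The substantive point is thus that $\Om$ is \emph{finitely connected at the $\dha$-boundary}. As recalled before the theorem, this is equivalent to the pair of conditions \ref{it-fin-conn-fin} and \ref{it-fin-conn-infty}. Condition \ref{it-fin-conn-fin} handles the finite boundary points, since on bounded subsets of $X$ the function $d_a$ is bi-Lipschitz to $d$ and hence, by \eqref{eq-comp-d-dhat}, the $\dha$- and $d$-topologies coincide there; condition \ref{it-fin-conn-infty} handles the point $\infty$, because a ball $\Bhat(\infty,r)$ is comparable, via \eqref{eq-comp-d-dhat}, to the complement of a closed ball $\itoverline{B(a,k)}$ with $k\simeq 1/r$, so finitely many components of $\Om\setm K$ translate into finitely many components of $\Om\cap\Bhat(\infty,r)$.

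Next I would identify the boundary data. Under finite connectedness at the boundary, $\bdhat_M\Om$ (taken with respect to $\dha_M$) consists of $\bdy_M\Om$ together with the copies of $\infty$ indexed by decreasing sequences $\{\Om_k\}$ of unbounded components of $\Om\setm\itoverline{B(a,k)}$, $k=1,2,\ldots$, and a function lies in $C(\bdhat_M\Om)$ exactly when it lies in $C(\bdy_M\Om)$ and satisfies \eqref{eq-lim-along-Om-k}. Thus $f$ extends uniquely to $\hat f\in C(\bdhat_M\Om)$, and \cite[Theorem~8.2]{BBSdir}, applied to the bounded domain $\Om\subset(\Xdot,\dha,\muha)$, yields that $\hat f$ is resolutive with respect to $\bdhat_M\Om$, i.e.\ $\uP\hat f=\lP\hat f$ for Perron solutions defined using $\dha_M$-limits.

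Finally I would transfer back to $(X,d,\mu)$. By Theorem~\ref{thm-harm-equiv-X-Xdot} the classes of superharmonic functions on $\Om$, and hence the Perron solutions, agree for $(d,\mu)$ and $(\dha,\muha)$; moreover, on $\Om$ the boundary-approach notions attached to $d_M$ and $\dha_M$ match --- at finite boundary points because the two metrics are bi-Lipschitz on bounded pieces, and at a directional copy of $\infty$ because $\dha_M$-convergence to it is, by construction, precisely convergence along the corresponding decreasing sequence $\{\Om_k\}$ in the sense described before \eqref{eq-lim-along-Om-k}. Hence resolutivity of $\hat f$ with respect to $\bdhat_M\Om$ is exactly resolutivity of $f$ in the sense of the statement. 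The main obstacle is precisely this identification (and the analogous one in the previous paragraph): pinning down that the abstract Mazurkiewicz completion of $\Om$ inside $(\Xdot,\dha)$ reproduces ``$\bdy_M\Om$ plus directional copies of $\infty$'' and that the limits attached to it are the ones in \eqref{eq-lim-along-Om-k}; once this is in place the theorem follows from \cite[Theorem~8.2]{BBSdir} with no further computation.
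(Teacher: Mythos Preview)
Your proposal is correct and follows exactly the route the paper takes: the paper simply states that the theorem is ``a direct consequence of the sphericalization process and Theorem~8.2 in \cite{BBSdir}'' without giving any further proof. You have spelled out the verifications (standing assumptions on $(\Xdot,\dha,\muha)$, finite connectedness at the $\dha$-boundary via \ref{it-fin-conn-fin}--\ref{it-fin-conn-infty}, the identification of $\bdhat_M\Om$ and of $C(\bdhat_M\Om)$ with $C(\bdy_M\Om)$ plus \eqref{eq-lim-along-Om-k}, and the transfer of Perron solutions via Theorem~\ref{thm-harm-equiv-X-Xdot}) that the paper leaves implicit in the discussion preceding the theorem.
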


Here, and in Theorem~\ref{thm-reg-Om-k}, it is assumed that
$f$ takes the value at a boundary point at infinity given by the above 
limit, which may depend on the direction towards $\infty$.

Boundary regularity with respect to the Mazurkiewicz boundary in
bounded domains that
are finitely connected at the boundary was studied in
Bj\"orn~\cite{ABgenreg}.
The results therein can therefore be reformulated using sphericalization
for unbounded domains as well.
We leave the details to the interested reader and restrict ourselves
to the following
special case which can be combined with the conditions in
Proposition~\ref{prop-infty-components}, Theorem~\ref{thm-infty-porous}
and Remark~\ref{rem-one-porous}.

\begin{thm}  \label{thm-reg-Om-k}
Let $\Om$ and $f$ be as in Theorem~\ref{thm-resol-a-b}.
Assume that 
\begin{equation}    \label{eq-lim=A}
\lim_{\bdy_M\Om\ni x\to\infty} f(x) = A \in\R  \quad
\text{along a decreasing sequence }  \{\Om_k\}_{k=1}^\infty
\end{equation}
of unbounded components $\Om_k\subset \Om\setm \itoverline{B(a,k)}$,
$k=1,2,\ldots$\,.
If $\infty$ is regular with respect to $\Om_j$
for some $j$,
then the Perron solution $P^M_\Om f$ 
in $\Om$ with respect to the Mazurkiewicz boundary satisfies
\[
\lim_{\Om\ni x\to\infty} P^M_\Om f(x) = A  \quad
\text{along }  \{\Om_k\}_{k=1}^\infty,
\] 
i.e.\ the point at infinity in the direction of $\{\Om_k\}_{k=1}^\infty$
is regular.
\end{thm}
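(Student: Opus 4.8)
The overall strategy is to transport the problem to the sphericalization and then quote the corresponding boundary regularity result for bounded domains that are finitely connected at the boundary, proved in Bj\"orn~\cite{ABgenreg}. Most of the groundwork is already in place: by the equivalence of~\ref{it-fin-conn-fin}--\ref{it-fin-conn-infty} with finite connectedness at the boundary with respect to $\dha$, the set $\Om$ is, in $(\Xdot,\dha,\muha)$, a bounded domain that is finitely connected at the boundary, and $\infty$ has become an ordinary finite boundary point. By Theorem~\ref{thm-harm-equiv-X-Xdot}, and by the invariance of upper gradients, Newtonian spaces and \p-harmonic and superharmonic functions under passing from $\dha$ to the Mazurkiewicz metric $\dha_M$ (\cite{BBSdir}), the Perron solution $P^M_\Om f$ is the same whether computed in $X$ or in $\Xdot$; moreover $f$ is resolutive with respect to $\bdhat_M\Om$ by Theorem~\ref{thm-resol-a-b}, so $P^M_\Om f$ is well defined. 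Under sphericalization the decreasing sequence $\{\Om_k\}$ of unbounded components of $\Om\setm\itoverline{B(a,k)}$ determines one point $\zeta\in\bdhat_M\Om$ lying over $\infty$, and $\Om\ni x\to\infty$ along $\{\Om_k\}$ exactly when $\Om\ni x\to\zeta$ with respect to $\dha_M$. The hypothesis (that $f\in C(\bdy_M\Om)$ and~\eqref{eq-lim=A} holds) says precisely that $f$, regarded as a function on $\bdhat_M\Om$, is continuous at $\zeta$ with $f(\zeta)=A$. Thus the assertion is equivalent to $\lim_{\Om\ni x\to\zeta}P^M_\Om f(x)=f(\zeta)$ with respect to $\dha_M$.

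Next I would invoke the relevant result of~\cite{ABgenreg} on the Mazurkiewicz-boundary Dirichlet problem. The hypothesis that $\infty$ is regular with respect to $\Om_j$ gives, via Proposition~\ref{prop-infty-components} applied to $\Om_j$ with $K=\itoverline{B(a,k)}$, that $\infty$ is regular with respect to the tail $\Om_k$ for every $k\ge j$; since, in the $\dha_M$-topology, the trace on $\Om$ of a neighbourhood of $\zeta$ is exactly such a tail $\Om_k$, the locality of boundary regularity with respect to the Mazurkiewicz boundary established in~\cite{ABgenreg} (the analogue of Theorem~\ref{thm-barrier+local}\,\ref{b-bb}) yields that $\zeta$ is a regular point of $\bdhat_M\Om$. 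By the barrier characterization of Mazurkiewicz-boundary regularity in~\cite{ABgenreg}, for a regular point the limit $P^M_\Om f(x)\to f(\zeta)$ holds for every bounded $f$ that is continuous at $\zeta$, so
\[
\lim_{\Om\ni x\to\zeta}P^M_\Om f(x)=f(\zeta)=A
\]
with respect to $\dha_M$. Translating this back to $X$ as in the first paragraph gives $\lim_{\Om\ni x\to\infty}P^M_\Om f(x)=A$ along $\{\Om_k\}_{k=1}^\infty$, which is the claim.

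The main obstacle is the matching carried out in the second paragraph: the hypothesis ``$\infty$ is regular with respect to $\Om_j$'' is phrased via the ordinary boundary of a subdomain of the \emph{original} space $X$, whereas what~\cite{ABgenreg} delivers is a statement about regularity of the Mazurkiewicz boundary point $\zeta$ of the sphericalized $\Om$. Reconciling the two requires combining the locality of Mazurkiewicz-boundary regularity from~\cite{ABgenreg} with Proposition~\ref{prop-infty-components} and Theorem~\ref{thm-barrier+local}, and in particular verifying that an (ordinary) barrier at $\infty$ for the branch $\Om_k$ produces a local barrier at $\zeta$ with respect to $\dha_M$, i.e.\ one that also stays bounded away from $0$ at the \emph{other} boundary points of $\Om$ lying over $\infty$ --- this is exactly where finite connectedness at the boundary, which makes $\dha_M$ separate the branches at $\infty$, enters. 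Along the way one should also check that each $\Om_j$ is itself a connected unbounded set that is finitely connected at the boundary and satisfies~\eqref{eq-Om-cond}, so that the cited results apply to it.
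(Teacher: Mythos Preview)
Your high-level strategy---transport to the sphericalization, identify the direction $\{\Om_k\}$ with a Mazurkiewicz boundary point $\zeta$, and then invoke the abstract regularity theory of~\cite{ABgenreg}---is sensible, but the obstacle you flag in the last paragraph is a genuine gap, not a routine verification. An ordinary barrier $u$ at $\infty$ for $\Om_k$ satisfies $\lim_{x\to\infty}u(x)=0$; in the $\dha_M$-topology this means $u(x)\to0$ along \emph{every} branch of $\Om_k$ at $\infty$, in particular along the branches corresponding to Mazurkiewicz points $\zeta'\ne\zeta$ lying over $\infty$. Hence $u$ fails the requirement $\liminf_{x\to\zeta'}u(x)>0$ and is \emph{not} a Mazurkiewicz barrier at $\zeta$. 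Finite connectedness at the boundary separates the branches topologically but does not by itself repair this; you would need either a genuinely different construction of a barrier, or a result from~\cite{ABgenreg} asserting directly that ordinary regularity of $x_0$ with respect to a branch implies Mazurkiewicz regularity of the corresponding $\zeta$---and you have not cited such a statement.

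The paper avoids this difficulty entirely by \emph{not} trying to prove abstract Mazurkiewicz regularity of $\zeta$. Instead it argues directly with the given $f$: for $\eps>0$ choose $k\ge j$ so that $|f-A|<\eps$ on $\bdy_M\Om_k\cap\bdy_M\Om$, use the restriction property $P^M_\Om f = P^M_{\Om_k} f_k$ in $\Om_k$ (with $f_k=f$ on $\bdy_M\Om_k\cap\bdy_M\Om$ and $f_k=P^M_\Om f$ on $\bdy_M\Om_k\cap\Om$), and then compare $P^M_{\Om_k} f_k$ with the \emph{ordinary} upper Perron solution $\uP_{\Om_k}\tf_k$, where $\tf_k$ equals $A+\eps$ on $\bdy\Om_k\cap\bdy\Om$ and $P^M_\Om f$ on $\bdy\Om_k\cap\Om\subset\bdy B(a,k)$. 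Since $\tf_k$ is continuous at $\infty\in\bdhat\Om_k$ with value $A+\eps$, and $\infty$ is ordinarily regular with respect to $\Om_k$ (by the monotonicity $\Om_k\subset\Om_j$), Theorem~6.1 in~\cite{BB} gives $\limsup_{x\to\infty}\uP_{\Om_k}\tf_k(x)\le A+\eps$, hence the same bound for $P^M_\Om f$ along $\{\Om_k\}$. The point is that by passing to the ordinary boundary of $\Om_k$ one collapses all the competing Mazurkiewicz points over $\infty$ into a single point, so ordinary regularity---which is exactly the hypothesis---suffices. Your approach, by staying on the Mazurkiewicz side throughout, forfeits this simplification.
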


\begin{proof}
Let $\eps>0$.
It is easily verified using \eqref{eq-lim=A} that there exists $k \ge j$ 
such that $|f-A|<\eps$ on $\bdy_M\Om_k \cap \bdy_M\Om$.
By Theorem~\ref{thm-resol-a-b}, $f$ is resolutive with respect to the
Mazurkiewicz boundary.
It then follows 
from the definition of Perron solutions with respect
to $\bdy_M\Om$ and $\bdy_M\Om_k$ that
\[
  P^M_\Om f = P^M_{\Om_k} f_k \quad \text{in $\Om_k$,} \quad \text{where }
f_k= \begin{cases}
    f & \text{on } \bdy_M\Om_k \cap \bdy_M\Om, \\
    P^M_\Om f & \text{on } \bdy_M\Om_k \cap \Om,  \end{cases}
\]
cf.\ Lemma~3.3 in Bj\"orn~\cite{JBMatsue}.
Let
\[
\tf_k= \begin{cases}
    A+\eps & \text{on } \bdy\Om_k \cap \bdy\Om, \\
    P^M_\Om f & \text{on } \bdy\Om_k \cap \Om.
\end{cases}
\]
Then it is easy to see that 
$P^M_{\Om_k} f_k \le \uP_{\Om_k} \tf_k$ in $\Om_k$.
By Corollary~4.4 in Bj\"orn--Bj\"orn~\cite{BB}
(or \cite[Corollary~11.3]{BBbook}),
$\infty$ is regular with respect to $\Om_k \subset \Om_j$.
Since $\bdy \Om_k \cap \Om \subset \bdy B(a,k)$,
we see that
$\tf_k$ is continuous at $\infty\in\bdhat\Om_k$, and thus we obtain
using \cite[Theorem~6.1]{BB}
(or \cite[Theorem~11.11]{BBbook}) that 
\[
\limsup_{\Om\ni x\to\infty} P^M_\Om f(x) 
\le \limsup_{\Om\ni x\to\infty} \uP_{\Om_k} \tf_k(x) \le A+\eps
\quad \text{along }  \{\Om_k\}_{k=1}^\infty.
\]
Letting $\eps\to0$ and applying the same argument to $-f$ concludes the proof.
\end{proof}

\begin{example}  \label{ex-fingers}
For $j=0,1,2,\ldots$\,, let 
\[
F_j = \{x=(x_1,x_2)\in\R^2:  x_2=2^{j}x_1 \ge 2^j\}
\]
and $\Om = (0,\infty)^2\setm \bigcup_{j=0}^\infty F_j$.
Then $\Om$ is finitely connected at the boundary and
the ``fingers'' of $\Om$ (each between $F_j$ and $F_{j+1}$)
determine countably many directions towards $\infty$, accumulating
towards the strip $(0,1)\times(0,\infty)\subset\R^2$,
which also determines one direction towards $\infty$.
By Theorem~\ref{thm-infty-porous},  
$\infty\in\bdhat\Om$ is regular with respect to $\Om$, 
and so are all the $\bdhat_M\Om$-boundary
points at infinity, by Theorem~\ref{thm-reg-Om-k}.
\end{example}

\begin{example}  \label{ex-fingers-prime}
For $j=0,1,2,\ldots$\,, let 
\[
F'_j = \{x=(x_1,x_2)\in\R^2:  x_2=2^{j}x_1 \ge 1\}
\]
and $\Om' = (0,\infty)^2\setm \bigcup_{j=0}^\infty F'_j$.
The ``fingers'' of $\Om$ (each between $F'_j$ and $F'_{j+1}$)
determine countably many directions towards $\infty$, accumulating
towards the positive $x_2$-axis.
This sequence of ``fingers'' also determines one direction 
towards $\infty$ even though there is no single finger corresponding to it.

Since
 $\Om$ is \emph{not} finitely connected at the boundary,
Theorems~\ref{thm-resol-a-b}, \ref{thm-reg-Om-k} and
\ref{thm-mazurk-negligible}
are \emph{not} applicable.
Nevertheless, Theorem~\ref{thm-infty-porous} shows that 
the point at $\infty$ is regular with respect to $\Om$.
\end{example}

The influence of each of the directions to infinity on the Dirichlet
problem is determined by the capacity $\Cphat^M$, which is adapted to
$\Om$ and the Mazurkiewicz metric, as in 
Bj\"orn--Bj\"orn--Shanmugalingam~\cite{BBSdir}.
To keep the exposition simple, we will 
restrict ourselves to the following sufficient
condition guaranteeing that a point at infinity is negligible along 
a decreasing sequence 
$\Om_1\supset\Om_2\supset\ldots$
of unbounded components of $\Om\setm \itoverline{B(a,k)}$,
$k=1,2,\ldots$\,,
cf.\ Definition~4.1 in Hansevi~\cite{Hansevi2}.

\begin{deff} \label{def-p-parabolic}
We say that a decreasing sequence $\{\Om_k\}_{k=1}^\infty$
of unbounded 
components of $\Om\setm \itoverline{B(a,k)}$, $k=1,2,\ldots$\,,
is \p-\emph{parabolic towards $\infty$} if there 
exist $u_j\in\Np(\Om)$ satisfying $u_j=0$ in $\Om\cap B(a,j)$,
\begin{equation}  \label{eq-p-parab}
\int_{\Om_j} g_{u_j}^p\,d\mu \to 0, \quad \text{as } j\to\infty,
\end{equation}
and $\liminf_{x\to\infty} u_j(x)\ge1$ along $\{\Om_k\}_{k=1}^\infty$
for each $j=1,2,\ldots$\,.
\end{deff}

\begin{thm} \label{thm-mazurk-negligible}
Let $\Om$ and $f$ be as in Theorem~\ref{thm-resol-a-b}.
If $\{\Om_k\}_{k=1}^\infty$ is \p-parabolic towards $\infty$
then the point at $\infty$ is negligible for 
the Perron solution $Pf$ along $\{\Om_k\}_{k=1}^\infty$, i.e.\ 
the requirement   
$\liminf_{\Om\ni x\to\infty} u(x)\ge f(\infty)$ in the definition of 
$\uP f$
does not need to be satisfied when $x\to\infty$ along $\{\Om_k\}_{k=1}^\infty$.
\end{thm}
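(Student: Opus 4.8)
The plan is to transfer the statement to the sphericalization $\Xdot$ and there invoke the corresponding negligibility result for the ordinary metric boundary, which is Hansevi's Proposition (the $\mathbf{p}$-parabolic case of his treatment, cf.\ Definition~4.1 in \cite{Hansevi2}). First I would observe that, by the sphericalization process (Theorem~\ref{thm-harm-equiv-X-Xdot} and the remarks after Definition~\ref{def-Perron}), Perron solutions with respect to $\bdhat_M\Om$ are the same whether computed in $(X,d,\mu)$ or in $(\Xdot,\dha,\muha)$, and that the direction towards $\infty$ given by $\{\Om_k\}_{k=1}^\infty$ corresponds under sphericalization to a single point $\xi_\infty$ of the Mazurkiewicz boundary $\bdhat_M\Om$ of $\Om$ viewed inside $\Xdot$ (this uses the identification of decreasing sequences of unbounded components with prime ends recalled before Example~\ref{ex-uncountable}). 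So the claim becomes: $\xi_\infty$ is a negligible boundary point for $P^M f$ on the bounded domain $\Om\subset\Xdot$.

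The key step is then to translate Definition~\ref{def-p-parabolic} into the statement that $\xi_\infty$ has zero variational capacity relative to $\Om$ in the Mazurkiewicz metric, i.e.\ that $\cphat^M(\{\xi_\infty\})=0$; a boundary point of zero capacity is automatically negligible for upper Perron solutions (this is the bounded-domain fact we are appealing to). Given $u_j\in\Np(\Om)$ as in Definition~\ref{def-p-parabolic}, I would first replace $u_j$ by $\min\{1,\max\{0,u_j\}\}$, which does not increase the energy and still satisfies $u_j=0$ on $\Om\cap B(a,j)$ and $\liminf_{x\to\infty}u_j\ge1$ along $\{\Om_k\}$. Then, using Proposition~\ref{prop-Dp-equiv} and~\eqref{eq-int-gu-guhat}, the energy in $(\dha,\muha)$ of $u_j$ over $\Om_j$ equals its energy in $(d,\mu)$, so \eqref{eq-p-parab} gives $\int_{\Om_j}\gh_{u_j}^p\,d\muha\to0$. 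Since $u_j$ vanishes on $\Om\cap B(a,j)$, on the part $\Om\setminus\Om_j$ (which is where $u_j$ may be nonzero but is uninteresting) the function is supported away from $\xi_\infty$; what matters for the capacity of $\xi_\infty$ is the behaviour near $\xi_\infty$, where $u_j$ is admissible for the capacity condensers shrinking to $\xi_\infty$ and has arbitrarily small energy. Thus $\cphat^M(\{\xi_\infty\})=0$, and the negligibility follows.

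The main obstacle I anticipate is the careful bookkeeping of \emph{which} capacity and \emph{which} admissibility class is relevant: the Mazurkiewicz capacity $\Cphat^M$ (or its variational analogue), adapted to $\Om$ as in \cite{BBSdir}, is defined using Newtonian functions on $\Om$ with respect to $\dha_M$, and one must check that the functions $u_j$ — after the truncation above and after verifying, via \cite{BBSdir}, that Newtonian spaces and upper gradients coincide for $\dha$ and $\dha_M$ on $\Om$ — actually serve as admissible test functions witnessing $\cphat^M(\{\xi_\infty\})=0$, paying attention to the ``$\liminf \ge 1$ along $\{\Om_k\}$'' condition translating into the correct boundary normalization at $\xi_\infty$ in the $\dha_M$-completion. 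Once that identification is in place, the deduction that a zero-capacity boundary point is negligible for $\uP f$ is standard (it is part of the machinery behind Theorem~\ref{thm-resol-C}\,(ii) and its analogues), and no further genuinely new estimate is needed.
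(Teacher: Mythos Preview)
Your approach is essentially the one the paper takes: transfer to the sphericalization, use~\eqref{eq-int-gu-guhat} to convert the \p-parabolicity condition~\eqref{eq-p-parab} into the statement that the corresponding Mazurkiewicz boundary point $\xi_\infty$ has zero $\Cphat^M$-capacity, and then conclude negligibility from the zero-capacity perturbation machinery. The only correction is the reference you invoke at the start: the result you need is not Hansevi's (which treats the ordinary metric boundary $\bdhat\Om$), but rather Theorem~8.2 in Bj\"orn--Bj\"orn--Shanmugalingam~\cite{BBSdir}, which is precisely the resolutivity-plus-perturbation theorem for the \emph{Mazurkiewicz} boundary on bounded domains; your later discussion in fact aligns with that result, and the paper's own proof cites exactly it.
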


\begin{proof}  
This follows from Theorem~8.2 in 
Bj\"orn--Bj\"orn--Shan\-mu\-ga\-lin\-gam~\cite{BBSdir} and the fact that 
\eqref{eq-p-parab}, together with~\eqref{eq-int-gu-guhat},
implies that the corresponding point in
the Mazurkiewicz boundary $\bdhat_M\Om$ has zero $\Cphat^M$-capacity. 
See the proof of Theorem~\ref{thm-resol-C} for further details.
\end{proof}

Note that if, as in Example~\ref{ex-uncountable}, there are uncountably many
directions towards $\infty$, it may happen
that $\Cphat^M(E,\Om)>0$,
even if each direction towards $\infty$  is \p-parabolic,
where $E$ is the set consisting of all the \p-parabolic
directions towards $\infty$.
Thus we cannot
conclude that all of $E$ can be ignored in the definition of Perron solutions, 
at least not using the  technique here.

\section{\texorpdfstring{\p}{p}-harmonic measure 
is nonadditive on null sets}
\label{sect-pharm-meas}

The \emph{\p-harmonic measure} of a set $E \subset \bdy \Om$
is $\pharm(E;\Om):=\uHp \chi_E$, where $\chi_E$ is the characteristic
function of $E$.
When $p =2$ it becomes the usual (upper) harmonic measure.

For the upper half plane $\R^2_\limplus=\{(x,y) \in \R^2: y > 0\}$,
equipped with the Lebesgue measure $m$,
Llorente--Manfredi--Wu~\cite{LoMaWu}
showed that for any $p \ne 2$ there are
sets $E_1,\ldots,E_k$ 
such that $\bigcup_{j=1}^k E_j=\R$, but
$\pharm(E_j;\R^2_\limplus)=m(\R \setm E_j)=0$, $j=1,\ldots,k$.
Since $\pharm(\R;\R^2_\limplus)=1$, it follows that
there are two sets $A_1,A_2\subset \R$ such that
\[
   \pharm(A_1;\R^2_\limplus)= \pharm(A_2;\R^2_\limplus) =0 
  < \pharm(A_1 \cup A_2;\R^2_\limplus),
\]
showing that the \p-harmonic measure is not finitely subadditive on zero sets.
As in Definition~\ref{def-Perron} and \cite[Section~11]{HeKiMa}, 
the \p-harmonic measure in~\cite{LoMaWu} 
is taken with respect to the compactified boundary 
$\bdhat \R^2_\limplus=\R\cup\{\infty\}$.
The equality 
$\pharm(\R;\R^2_\limplus) = \pharm(\bdhat \R^2_\limplus;\R^2_\limplus) = 1$ 
then follows
from \cite[Theorem~11.4]{HeKiMa} and the following lemma.
The lemma
was mentioned in \cite{LoMaWu}, but 
for the reader's convenience we provide a proof.

\begin{lem}  \label{lem-om-infty-0}
Let $X=\R^n$, $n \ge 2$,  equipped with the Lebesgue measure, 
and let $p>1$.
Then $\pharm(\{\infty\};\Rnp)=0$.
\end{lem}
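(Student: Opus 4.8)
The plan is to establish that $\uHp\chi_{\{\infty\}}(x_0)=0$ for every $x_0\in\Rnp$. Since the constant $0$ lies in the lower class of $\chi_{\{\infty\}}$, one has $\uHp\chi_{\{\infty\}}\ge\lP\chi_{\{\infty\}}\ge0$, so it will be enough to exhibit, for each fixed $x_0$, superharmonic functions in $\UU_{\chi_{\{\infty\}}}$ whose values at $x_0$ tend to $0$. I would obtain these by exhausting $\Rnp$ with the bounded domains $D_R:=\Rnp\cap B(0,R)$.

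First, for large $R$, let $u_R$ be the $p$-harmonic function in $D_R$ that solves (in the Perron sense) the Dirichlet problem with boundary datum $1$ on the spherical cap $C_R:=\Rnp\cap\bdy B(0,R)$ and $0$ on the flat part $\bdy\Rnp\cap B(0,R)$; then $0\le u_R\le1$. Every point of $C_R$ and of the flat part is a regular boundary point of $D_R$, since there the complement of $D_R$ contains, respectively, the exterior of $B(0,R)$ or a half-space, and hence is uniformly $p$-fat; thus $u_R$ attains these values continuously. I would then extend $u_R$ by $1$ on $\Rnp\setm B(0,R)$, call the extension $\tilde u_R$, and invoke the pasting lemma for superharmonic functions (\cite[Chapter~7]{HeKiMa}, or the corresponding result in \cite{BBbook}), applied to $u_R$ and the constant $1$, using that $\liminf_{D_R\ni y\to x}u_R(y)=1$ for all $x\in C_R$; this makes $\tilde u_R$ superharmonic on $\Rnp$. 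Being nonnegative and identically $1$ on $\Rnp\setm B(0,R)$, it satisfies $\liminf_{\Rnp\ni y\to\infty}\tilde u_R(y)=1$ and $\liminf_{\Rnp\ni y\to x}\tilde u_R(y)\ge0=\chi_{\{\infty\}}(x)$ at each finite $x\in\bdy\Rnp$, so $\tilde u_R\in\UU_{\chi_{\{\infty\}}}$ and hence $\uHp\chi_{\{\infty\}}(x_0)\le\tilde u_R(x_0)=u_R(x_0)$ whenever $|x_0|<R$.

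The remaining point is to show $u_R(x_0)\to0$ as $R\to\infty$. Here I would use that $\R^n\setm\Rnp=\{x_n\le0\}$ satisfies a capacity density condition with constants depending only on $n$ and $p$, so that the Maz'ya--Wiener type decay estimate for nonnegative $p$-harmonic functions vanishing on a uniformly fat boundary portion applies to $u_R$ (which vanishes on $\bdy\Rnp\cap B(0,R)$ and is bounded by $1$): this gives $C\ge1$ and $\al>0$, depending only on $n,p$, with
\[
u_R(x_0)\le\sup_{\Rnp\cap B(0,\rho_0)}u_R\le C\Bigl(\frac{\rho_0}{R}\Bigr)^{\al},
\qquad \rho_0:=|x_0|,\ R>\rho_0,
\]
and letting $R\to\infty$ finishes the proof; since $x_0$ was arbitrary, $\pharm(\{\infty\};\Rnp)\equiv0$. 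I expect the only delicate points to be the correct use of the pasting lemma across $\bdy B(0,R)$ (relying on regularity of the cap points) and the application of the boundary decay estimate with constants uniform in $R$; both are standard in the nonlinear potential theory of $\R^n$. For $p\ge n$ there is a shortcut avoiding the last step --- then $\{\infty\}$ has zero $p$-capacity in the one-point compactification and is therefore negligible for Perron solutions, so $\uHp\chi_{\{\infty\}}=\uHp0=0$ --- but the exhaustion argument above works uniformly for all $p>1$.
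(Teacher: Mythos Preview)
Your argument is correct and genuinely different from the paper's.  The paper writes down explicit competitors in $\UU_{\chi_{\{\infty\}}}$ built from the radial \p-Laplace ``fundamental solution'': for $1<p<n$ it uses
\(
u_k(x)=1-\bigl(|x-(0,\dots,0,-k)|/k\bigr)^{(p-n)/(p-1)},
\)
and similar explicit choices when $p=n$ and $p>n$, then lets $k\to\infty$ at a fixed point.  Your route---exhaust by half-balls, solve the mixed Dirichlet problem, paste with the constant~$1$ using regularity of the spherical cap, and then invoke the Maz'ya/Wiener oscillation-decay estimate coming from the uniform \p-fatness of the lower half-space at the origin---avoids the case split and works uniformly for all $p>1$, at the cost of importing the pasting lemma and the quantitative boundary H\"older estimate.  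The paper's advantage is brevity and self-containment: the barriers are one line each and need no auxiliary results; your advantage is that the scheme is insensitive to the value of~$p$ and would transfer to any domain whose complement is uniformly \p-fat near the chosen boundary centre.  Two cosmetic points worth tightening in a final write-up: take $\rho_0>|x_0|$ (e.g.\ $\rho_0=2|x_0|$) so that $x_0$ actually lies in the ball where you take the supremum, and apply the decay estimate with radius $R/2$ rather than $R$ so that the relevant boundary portion $\bdy D_R\cap B(0,R/2)$ is purely the flat part where $u_R$ vanishes---the constants still depend only on $n$ and $p$.
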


\begin{proof}
Let $f=\chi_{\{\infty\}}$.
Consider first the case when $1<p<n$.
Let 
\[
   u_k(x)=1- \biggl(\frac{|x-(0,\ldots,0,-k)|}{k}\biggr)^{(p-n)/(p-1)}, 
   \quad x \in \Rnp.
\]
Then $u_k \in \UU_{f}$ and thus
for each $x=(x_1,\ldots,x_n) \in \Rnp$,
\[
   \pharm(\{\infty\};\Rnp)(x)
   \le u_k(x)
   \le 1-\biggl| \frac{x_n+k}{k}\biggr|^{(p-n)/(p-1)}
   \to 0, 
   \quad \text{as } k \to \infty.
\]
For $p\ge n$, $k \ge 2$ and $x \in \Rnp$, we instead let
\[
   u_k(x)=\begin{cases}
     \displaystyle \frac{1}{k}|x|^{(p-n)/(p-1)}, & \text{if } p >n, \\[2mm]
\displaystyle \log\frac{|x-(0,\ldots,0,-k)|}{k}, & \text{if } p =n.
    \end{cases}
\]
These are estimated similarly.
\end{proof}

The proof in~\cite{LoMaWu}
uses an idea of Wolff~\cite{wolff} which involves intricate use of scaling
and translation invariance, and is thus not applicable to bounded domains.
Now we are able to construct similar bounded examples using
sphericalization.
As mentioned in \cite{LoMaWu}, 
by adding dummy variables one directly obtains similar 
examples also on $\Rnp$, $n \ge 3$.
Using the sphericalization technique in this paper we
obtain the following example on the sphere $\Sp^{n}$.

\begin{example} \label{ex-Sn}
Let $X=\R^n$ and $\Xdot=\Sp^n$ be its sphericalization, 
$n \ge 2$, $1<p< \infty$, $p \ne 2$,
$\Om = \Rnp$, and $A_1$ and $A_2$ be as above.
When $\Om$ is seen as a bounded subdomain of $\Xdot$, we have
that 
\[
   \pharm(A_1;\Om)= \pharm(A_2;\Om) =0 
  < \pharm(A_1 \cup A_2;\Om),
\]
showing that also in this situation
the \p-harmonic measure is not finitely subadditive on zero sets.
The sets $E_j$ transfer similarly.

Note that the metric $\dha$  from \eqref{eq-metric-dh},  that
we equip the sphere $\Xdot = \Sp^{n}$ with, is \emph{not}
the usual spherical (inner) metric, nor the metric
induced by $\R^{n+1}$.
Moreover, $\Xdot = \Sp^{n}$ is
equipped with a measure which depends on $p$.
\end{example}

Just as sphericalization can be used to map an unbounded
space into a bounded space, flattening can be used in
the converse direction,
see 
Li--Shanmugalingam~\cite{LiShan} and
Durand-Cartagena--Li~\cite{DL1}, \cite{DL2}.
Thus we can flatten the sphere in Example~\ref{ex-Sn}, mapping
a point $c \notin \overline{\Om}$ to infinity,
e.g.\ $c=(0,\ldots,0,-1)$.
This would produce an example of a bounded domain in $\R^n$ 
in the spirit of the example by 
Llorente--Manfredi--Wu~\cite{LoMaWu},
where $\R^n$ necessarily needs to be equipped with some
weighted measure, if $p \ne n$.

In this particular
case, we can obtain the same result by 
using spherical inversion
as follows:
Let $X=\R^n$, equipped with the Lebesgue measure $dx$,
and let $Y$ be another copy of $\R^n$.
Let $\Phi: X\setm\{0\} \to Y\setm\{0\}$ be  given by 
\begin{equation}  \label{eq-inversion}
    \Phi(x)= \frac{x}{|x|^2},
    \quad \text{and thus} \quad
    \Phi^{-1}(y)= \frac{y}{|y|^2}.
\end{equation}
We extend $\Phi$ so that $\Phi(\infty)=0$ and $\Phi(0)=\infty$.
As we shall see, to preserve the \p-energy we need
to equip $Y$ with the measure $d\muh(y)=|y|^{2(p-n)}\, dy$.

Let $\Om \subset X$ be open and consider a function $u:\Om \to \eR$.
Let $\Omhat=\Phi(\Om)$ and $\uh=u \circ \Phi^{-1} : \Omhat \to \eR$.
As in Proposition~\ref{prop-Dp-equiv} we see that
\[
    \gh_{\uh}(y) = |x|^2 g_u(x) = \frac{g_u(x)}{|y|^2},
    \quad \text{where } y=\Phi(x),
\]
provided that $u \in \Dploc(\Om)$ and $\uh \in \Dploc(\Omhat)$.
It then follows
that
\[
         \int_{\Omhat} \gh_{\uh}^p \, d\muh(y) 
         =   \int_{\Om} (|x|^{2}g_u(x))^{p} |x|^{2(n-p)} |x|^{-2n}\, dx 
         =   \int_{\Om} g_u^p\, dx,
\] 
i.e.\ the \p-energy is indeed preserved
(and  $u \in \Dploc(\Om)$ if and only if $\uh \in \Dploc(\Omhat)$).

\begin{example}
To get a bounded example 
in the spirit of 
Llorente--Manfredi--Wu~\cite{LoMaWu},
we first shift their example and let
$\Om=\bigl\{(x_1,\ldots,x_n): x_n >\tfrac12\bigr\}$.
(Their sets $E_j$ and $A_j$
should also be shifted using the map 
$x \mapsto x+ \bigl(0,\ldots,0,\tfrac12\bigr)$.)
After performing the inversion~\eqref{eq-inversion},
we directly get a bounded
example, with $\Omhat$ being the unit ball in $\R^n$ 
centred at $(0,\ldots,0,1)$.
The sets $E_j$ and $A_j$ transfer as described above 
into $\widehat{E}_j=\Phi(E_j), \hat{A}_j=\Phi(A_j)\subset\bdry\Omhat$.

To fit within the standard framework as e.g.\ in
Heinonen--Kilpel\"ainen--Martio~\cite{HeKiMa},
we need $\muh$ to be a \p-admissible measure.
By \cite[p.\ 10]{HeKiMa}, 
this happens if and only if $2(p-n) > -n$, i.e.\
$p > n/2$, 
recovering a condition that
 we have encountered earlier.
(If $2(p-n) \le  -n$ then the weight is not locally 
integrable around the origin and thus cannot be \p-admissible.)

When $p=n>2$
the weight is $1$ and thus 
$(\R^n,d\muh)$ is the 
\emph{unweighted}~$\R^n$.
\end{example}



\begin{thebibliography}{99}

\bibitem{prime} \art{Adamowicz, T.,
        Bj\"orn, A., Bj\"orn, J. \AND Shan\-mu\-ga\-lin\-gam, N.}
        {Prime ends for domains in metric spaces}
        {Adv. Math.}{238}{2013}{459--505}


\bibitem{ABsuper} \art{Bj\"orn, A.}
        {Characterizations of \p-superharmonic
         functions on metric spaces}
        {Studia Math.} {169} {2005} {45--62}

\bibitem{ABkellogg} \art{Bj\"orn, A.}
         {A weak Kellogg property for quasiminimizers}
         {Comment. Math. Helv.} {81} {2006} {809--825}

\bibitem{ABclass} \art{\auth{Bj\"orn}{A}}
         {A regularity classification of boundary points
           for \p-harmonic functions and quasiminimizers}
        {J. Math. Anal. Appl.} {338} {2008} {39--47}

\bibitem{ABcluster} \art{\auth{Bj\"orn}{A}}
        {Cluster sets for Sobolev functions and quasiminimizers}
        {J. Anal. Math.} {112} {2010} {49--77}

\bibitem{ABgenreg} \arttoappear{\auth{Bj\"orn}{A}}
         {The Kellogg property and boundary regularity for 
         \p-harmonic functions with respect to the 
    Mazurkiewicz boundary and other compactifications}
  {Complex Var. Elliptic Equ.} \\ {\tt doi:10.1080/17476933.2017.1410799}.

\bibitem{BB} \art{\auth{Bj\"orn}{A} \AND \auth{Bj\"orn}{J}}
        {Boundary regularity for \p-harmonic functions and 
          solutions of the obstacle problem}
        {J. Math. Soc. Japan} {58} {2006} {1211--1232}

\bibitem{BB2} \art{Bj\"orn, A. \AND Bj\"orn, J.}
	{Approximations by regular sets and Wiener solutions in metric spaces}
	{Comment. Math. Univ. Carolin.} {48} {2007} {343--355}

\bibitem{BBbook} \book{\auth{Bj\"orn}{A} \AND \auth{Bj\"orn}{J}}
        {\it Nonlinear Potential Theory on Metric Spaces}
    {EMS Tracts in Mathematics {\bf 17},
        European Math. Soc., Z\"urich, 2011}

\bibitem{BBLeh1} \art{\auth{Bj\"orn}{A}, \auth{Bj\"orn}{J}
	\AND \auth{Lehrb\"ack}{J}}
        {Sharp capacity estimates for annuli in weighted $\R^n$ and metric
        spaces}{Math. Z.} {286} {2017} {1173--1215} 

\bibitem{BBS} \art{\auth{Bj\"orn}{A}, \auth{Bj\"orn}{J} 
    \AND \auth{Shanmugalingam}{N}}
        {The Dirichlet problem for \p-harmonic functions on metric spaces}
        {J. Reine Angew. Math.} {556} {2003} {173--203}

\bibitem{BBS2} \art{\auth{Bj\"orn}{A}, \auth{Bj\"orn}{J} 
    \AND \auth{Shanmugalingam}{N}}
        {The Perron method for \p-harmonic functions in metric spaces}
        {J. Differential Equations} {195} {2003} {398--429}

\bibitem{BBSdir} \art{\auth{Bj\"orn}{A}, \auth{Bj\"orn}{J} 
        \AND \auth{Shanmugalingam}{N}}
    {The Dirichlet problem for \p-harmonic functions with respect to
the Mazurkiewicz boundary, and new capacities}
    {J. Differential Equations} {259} {2015} {3078--3114}

\bibitem{BBSfincon} \art{\auth{Bj\"orn}{A}, \auth{Bj\"orn}{J} 
        \AND \auth{Shanmugalingam}{N}}
        {The Mazurkiewicz distance and sets that are finitely connected 
        at the boundary}
        {J. Geom. Anal.}{26}{2016}{873--897} 

\bibitem{BBSjodin} \art{\auth{Bj\"orn}{A}, \auth{Bj\"orn}{J} 
      \AND \auth{Sj\"odin}{T}}
  {The Dirichlet problem for  \p-harmonic functions with respect to arbitrary
  compactifications}
  {Rev. Mat. Iberoam.} {34} {2018} {1323--1360} 

\bibitem{BMartio} \art{ Bj\"orn, A. \AND Martio, O.}
	{Pasting lemmas and characterizations of boundary 
	regularity for quasiminimizers}
	{Results Math.} {55} {2009} {265--279}

\bibitem{JBpower} \art{\auth{Bj\"orn}{J}}
        {Poincar\'e inequalities for powers and products of admissible weights}
        {Ann. Acad. Sci. Fenn. Math.} {26}{2001}{175--188}

\bibitem{Bj02}  \art{\auth{Bj\"orn}{J}}
        {Boundary continuity for quasiminimizers on metric spaces}
        {Illinois J. Math.} {46} {2002} {383--403}

\bibitem{JBMatsue} \artin{\auth{Bj\"orn}{J}}
        {Wiener criterion for Cheeger \p-harmonic functions on metric spaces}
        {{\it Potential Theory in Matsue},
        Adv. Stud. Pure Math. {\bf 44}, pp. 103--115,
        Mathematical Society of Japan, Tokyo, 2006}

\bibitem{JB-pfine} \art{\auth{Bj\"orn}{J}}
        {Fine continuity on metric spaces}
        {Manuscripta Math.}{125}{2008}{369--381}

\bibitem{JBCalcVar} \art{\auth{Bj\"orn}{J}}
        {Necessity of a Wiener type condition for boundary regularity
          of quasiminimizers and nonlinear elliptic equations}
        {Calc. Var. Partial Differential Equations}
	{35} {2009} {481--496}

\bibitem{BMS} \art{\auth{Bj\"orn}{J}, \auth{MacManus}{P}
	\AND  \auth{Shanmugalingam}{N}}
        {Fat sets and pointwise boundary estimates for \p-harmonic functions
        in metric spaces}
        {J. Anal. Math.}{85}{2001}{339--369}

\bibitem{BonkKleiner02} \art{\auth{Bonk}{M} \AND \auth{Kleiner}{B}}
        {Rigidity for quasi-M\"obius group actions}
        {J. Differential Geom.} {61}{2002} {81--106} 

\bibitem{BuckleyHerronXie} \art{\auth{Buckley}{S. M}, \auth{Herron}{D}
    \AND \auth{Xie}{X}}
  {Metric space inversions, quasihyperbolic distance, and uniform spaces}
  {Indiana Univ. Math. J.} {57} {2008} {837--890}

\bibitem{DL1} \art{\auth{Durand-Cartagena}{E} \AND \auth{Li}{X}}
      {Preservation of \p-Poincar\'e inequality for large $p$ 
     under sphericalization and flattening}
      {Illinois J. Math.} {59} {2015} {1043--1069} 

\bibitem{DL2} \art{\auth{Durand-Cartagena}{E} \AND \auth{Li}{X}}
      {Preservation of bounded geometry under sphericalization and 
    flattening: quasiconvexity and $\infty$-Poincar\'e inequality}
  {Ann. Acad. Sci. Fenn. Math.} {42} {2017} {303--324}

\bibitem{Estep} \book{\auth{Estep}{D}}
  {Prime End Boundaries of Domains in Metric Spaces, and the Dirichlet Problem}
  {Ph.D. thesis, University of Cincinnati, Cincinnati, OH, 2015} 

\bibitem{Hansevi2} \art{\auth{Hansevi}{D}}
  {The Perron method for \p-harmonic functions in unbounded sets in 
   $\R^n$ and metric spaces}
  {Math. Z.} {288} {2018} {55--74}

\bibitem{HeKiMa} \book{Heinonen, J., Kilpel\"ainen, T.\ \AND Martio, O.}
        {Nonlinear Potential Theory of Degenerate Elliptic Equations}
        {2nd ed., Dover, Mineola, NY, 2006}

\bibitem{HeKo98} \art{Heinonen, J. \AND Koskela, P.}
	{Quasiconformal maps in metric spaces with controlled geometry}
	{Acta Math.} {181} {1998} {1--61}

\bibitem{HKSTbook} \book{\auth{Heinonen}{J}, \auth{Koskela}{P},
	\auth{Shanmugalingam}{N} \AND \auth{Tyson}{J. T}}
       {Sobolev Spaces on Metric Measure Spaces}
	{New Mathematical Monographs {\bf 27}, Cambridge Univ. Press,
        Cambridge, 2015}

\bibitem{Holo02} \art{\auth{Holopainen}{I}}
      {Asymptotic Dirichlet problem for the \p-Laplacian 
      on Cartan--Hadamard manifolds}
    {Proc. Amer. Math. Soc.} {130} {2002} {3393--3400}

\bibitem{HoloLV07} \art{\auth{Holopainen}{I}, \auth{Lang}{U} \AND
    \auth{V\"ah\"akangas}{A}}
    {Dirichlet problem at infinity on Gromov hyperbolic metric measure spaces}
    {Math. Ann.} {339} {2007} {101--134}

\bibitem{HoloV07} \art{\auth{Holopainen}{I} \AND
    \auth{V\"ah\"akangas}{A}}
  {Asymptotic Dirichlet problem on negatively curved spaces}
  {J. Anal.} {15} {2007} {63--110} 

\bibitem{karmazin2008} \book{\auth{Karmazin}{A. P}}
         {Quasiisometries, the Theory of Prime Ends and Metric Structures on Domains}
         {Surgut, 2008  (Russian)}

\bibitem{KeithZhong} \art{\auth{Keith}{S} \AND \auth{Zhong}{X}}
        {The Poincar\'e inequality is an open ended condition}
        {Ann. of Math.}{167}{2008}{575--599}

\bibitem{Kilp89} \art{\auth{Kilpel\"ainen}{T}} 
        {Potential theory for supersolutions of degenerate elliptic equations}
        {Indiana Univ. Math. J.} {38} {1989} {253--275}

\bibitem{KiMa02} \art{Kinnunen, J. \AND Martio, O.}
        {Nonlinear potential theory on metric spaces}
        {Illinois Math. J.} {46} {2002} {857--883}

\bibitem{KiSh01} \art{Kinnunen, J. \AND Shan\-mu\-ga\-lin\-gam, N.}
        {Regularity of quasi-minimizers on metric spaces}
        {Manuscripta Math.} {105} {2001} {401--423}

\bibitem{korte07} \art{\auth{Korte}{R}}
        {Geometric implications of the Poincar\'e inequality}
        {Results Math.}{50}{2007}{93--107}

\bibitem{KoMaSh} \art{\auth{Korte}{R}, \auth{Marola}{N} \AND 
        \auth{Shanmugalingam}{N}}
        {Quasiconformality, homeomorphisms between metric measure spaces 
        preserving quasiminimizers, and uniform density property}
        {Ark. Math.}{50}{2012}{111--134}

\bibitem{KoMc} \art{\auth{Koskela}{P} \AND \auth{MacManus}{P}}
        {Quasiconformal mappings and Sobolev spaces}
        {Studia Math.}{131}{1998}{1--17}

\bibitem{Laa} \artnopt{\auth{Laakso}{T}}
        {Ahlfors $Q$-regular spaces with arbitrary $Q>1$
         admitting weak Poincar\'e inequality}
        {Geom. Funct. Anal.}{10}{2000}{111--123};
        Erratum: \emph{Geom. Funct. Anal.} {\bf 12} (2002), 650.

\bibitem{LiShan} \art{\auth{Li}{X} \AND \auth{Shanmugalingam}{N}}
        {Preservation of bounded geometry under sphericalization and 
        flattening}
        {Indiana Univ. Math. J.}{64}{2015}{1303--1341}

\bibitem{LoMaWu} \art{\auth{Llorente}{J. G},
        \auth{Manfredi}{J. J} \AND \auth{Wu}{J.-M}}
	{\p-harmonic measure is not additive on null sets}
	{Ann. Sc. Norm. Super. Pisa Cl. Sci.} {4} {2005} {357--373}

\bibitem{nakki70} \art{\auth{N\"akki}{R}}
    {Boundary behavior of quasiconformal mappings in $n$-space}
    {Ann. Acad. Sci. Fenn. Ser. A I Math.} {484} {1970} {1--50}

\bibitem{Sh-rev} \art{Shanmugalingam, N.}
        {Newtonian spaces\textup{:} An extension of Sobolev spaces
        to metric measure spaces}
        {Rev. Mat. Iberoam.}{16}{2000}{243--279}

\bibitem{Sh-harm} \art{Shanmugalingam, N.}
        {Harmonic functions on metric spaces}
        {Illinois J. Math.}{45}{2001}{1021--1050}

\bibitem{vaisala} \book{\auth{V\"ais\"al\"a}{J}}
    {Lectures on $n$-dimensional Quasiconformal Mappings}
    {Lecture Notes in Math. {\bf 229},
    Springer, Berlin--Heidelberg, 1971}

\bibitem{wolff} \art{\auth{Wolff}{T. H}}
     {Gap series constructions for the \p-Laplacian}
     {J. Anal. Math.} {102} {2007} {371--394} 

\end{thebibliography}
\end{document}